\pgfplotsset{compat=1.8}
\def\revddots{\mathinner{\mkern1mu\raise\p@
\vbox{\kern7\p@\hbox{.}}\mkern2mu
\raise4\p@\hbox{.}\mkern1mu\raise7\p@\hbox{.}\mkern1mu}}
\theoremstyle{plain}% default
\newtheorem{thm}{Theorem}[section]
\newtheorem{lem}[thm]{Lemma}
\newtheorem{prop}[thm]{Proposition}
\newtheorem{cor}[thm]{Corollary}
\theoremstyle{definition}
\newtheorem{exmp}[thm]{Example}
\newtheorem{rem}[thm]{Remark}
\theoremstyle{remark}
\newcommand{\F}{F}
\newcommand{\GL}{\textnormal{GL}}
\newcommand{\C}{\mathbb{C}}
\newcommand{\Ind}{\mathrm{Ind}}
\newcommand{\Hom}{\mathrm{Hom}}
\newcommand{\Irr}{{\mathrm{Irr}}}
\newcommand{\drep}[2]{\delta(#1,#2)}
\newcommand{\zrep}[2]{\zeta(#1,#2)}
\title{Theta correspondence for symplectic--orthogonal and metaplectic--orthogonal p-adic dual pairs\thanks{MSC2010: Primary 22E50, Secondary 11F27 \newline Keywords: theta correspondence; Weil representation; metaplectic group}}
\author{Petar Baki\'c, Marcela Hanzer}
\date{}
\begin{document}

\maketitle

\begin{abstract} In this paper, we completely describe the Howe correspondence for the dual pairs from the title over a nonarchimedean local field of characteristic zero. More specifically, for every irreducible admissible representation of these groups,  we find its first occurrence index in the  theta correspondence and we describe, in terms of their Langlands parameters, the small theta lifts on all levels.
\end{abstract}
\section{Introduction}
In this paper, we study the Howe correspondence for the dual pairs  $(\textnormal{Sp}(n), \textnormal{O}(V))$, and $(\textnormal{Mp}(n),\textnormal{O}(V))$ over a nonarchimedean field $F$ of characteristic zero. Here $\textnormal{Sp}(n)$ denotes the symplectic and $\textnormal{Mp}(n)$ the metaplectic group of rank $n/2$, for any even integer $n\geq 2$; $\textnormal{O}(V)$ denotes the orthogonal group of the quadratic space which belongs to a fixed (but arbitrary) Witt tower. Let us briefly recall the basic setting.

Let $V$ be a quadratic space of dimension $m$ over $F$ (i.e.~a space endowed with a symmetric non-degenerate $F$-bilinear form) and let $\textnormal{O}(V)$ denote the corresponding orthogonal group. The groups $\textnormal{O}(V)$ and $\textnormal{Sp}(n)$ form a dual pair inside $\textnormal{Sp}(nm)$ (cf.~\cite{Kudla1}, Chapter II).  Now, for $F$ p-adic, there exists a unique two-fold central covering group of $\textnormal{Sp}(n),$ the metaplectic group $\textnormal{Mp}(n).$  By fixing a non-trivial additive character $\psi$ of $F$, we obtain the so-called Weil representation $\omega_{nm,\psi}$ of the metaplectic group $\textnormal{Mp}(nm).$ The covering in  $\textnormal{Mp}(nm)$ splits over the dual pair $(\textnormal{O}(V),Sp(n))$ if $m=\dim V$ is even, thus, by restricting this representation to $\textnormal{O}(V) \times \textnormal{Sp}(n)$ we obtain the Weil representation $\omega_{m,n,\psi}$ of this dual pair; if $m$ is odd, we get the Weil representation of $\textnormal{O}(V) \times \textnormal{Mp}(n).$ We now let $(G,H)$ denote one of these pairs (note that we allow $G$ to be either metaplectic/symplectic or orthogonal).

Now for any irreducible admissible representation $\pi$ of $G$ we may look at the maximal $\pi$-isotypic quotient of $\omega_{m,n,\psi}$. We denote it by $\Theta(\pi,m)$ and call it the full theta lift of $\pi$ to $V$. This representation, when non-zero, has a unique irreducible quotient, denoted $\theta(\pi,m)$---the small theta lift of $\pi$. This basic fact, called the Howe duality conjecture, was first formulated by Howe \cite{Howe_theta_series}, proven by Waldspurger \cite{Waldspurger_howe_duality} (for odd residue characteristic) and by Gan and Takeda \cite{Gan_Takeda_proof_of_Howe} in general.

The Howe duality establishes a map $\pi \mapsto \theta(\pi)$ which is called the theta correspondence. It turns out that it is very important in  local representation theory, but also, its global counterpart (which agrees with local theta correspondence) gives us one of a very few direct ways to explicitly construct automorphic forms. The study of theta correspondence was started by Roger Howe \cite{Howe_theta_series,Howe_transcending}, and further developed by Kudla \cite{Kudla2,Kudla1}, Rallis \cite{rallis1984howe}, Kudla-Rallis \cite{KR1}, Moeglin-Vigneras-Waldspurger \cite{MVW_Howe}, Waldspurger \cite{Waldspurger_howe_duality} and many others. In recent years, the study of theta correspondence is heavily influenced by the rapid developments in the framework of the Langlands program, so that new results are formulated in this spirit (cf.~\cite{Gan_Savin_Metaplectic_2012}, \cite{Atobe_Gan}).

We now describe our results. Let us fix an irreducible admissible representation $\pi$ of a group $G$ as above. Then, each $H$ (the other member of the dual pair $(G,H)$) belongs to a certain Witt tower. There is a natural “other'' Witt tower attached to this one (cf.~\cite{Kudla1}, Chapter V or Section \ref{subs_first_occurrence} here). Because of the conservation relation (cf.~\cite{Sun_Zhu_conservation}), it is natural to simultaneously study the lifts of $\pi$ on both of these towers. We find the first occurrence index of $\pi$ in each of these towers (Theorem \ref{theorem_occurrence}), and describe the Langlands parameters of all the non-zero lifts of $\pi$ in these towers (so, specifically, we describe the lift of $\pi$ as a representation of $H$)---this is obtained in Theorem \ref{theorem_appearance}.

Among the two target towers we consider, the tower in which $\pi$ occurs on the lower level is referred to as the ``going-down'' tower  for $\pi$ (the other one is the ``going-up tower'', cf.~Section 4.2). Let us write $\pi$ as the Langlands quotient, $\pi=L(\nu^{s_r}\delta_r,\ldots,\nu^{s_1}\delta_1;\tau)$---see Section 2.4 for notation. It is interesting to note that, if we fix a pair of target towers, then the going-down tower for $\pi$ (in that pair of towers) is also the going-down tower for the tempered representation $\tau$ (Section 5). An interesting phenomenon is that $\pi$ can occur “earlier'' than $\tau$ in the going-down tower (we define $l(\pi)$---the relative first occurrence index---in Section 5).

Measuring that discrepancy between occurrences of $\pi$ and of $\tau$ is precisely what gives us the first occurrence index for $\pi$
(we note that we heavily rely on the results of Atobe and Gan \cite{Atobe_Gan} which describe the lifts of the tempered representations). 
The main idea is finding the longest ladder-representation (cf.~\cite{Lapid_Minguez_determinantal_Tadic}) of a specific kind (the twists of the Steinberg representation) inside $\nu^{s_r}\delta_r\times\nu^{s_{r-1}}\delta_{r-1}\times \cdots \times  \nu^{s_1}\delta_1$. Each rung of this ladder leads to $\pi$ occurring ``lower'' in the going-down tower (Lemma \ref{lemma_reverse_temp_thin}, Theorem \ref{theorem_occurrence}). This also gives us the description of the lifts themselves. Namely, we determine both the occurrence index and the description of the lifts using an algorithm which gives us the ladder-representation mentioned above. We prove both the main property of the algorithm (cf.~Proposition \ref{prop_unique_q}) and the description of the “deeper'' lifts (Theorem \ref{theorem_appearance} (1)) by induction, roughly, on the number of the rungs. It is worth noting that once the first occurrence of the representation $\pi$ is known, one can derive the description of the lifts directly, without relying on the induction. However, that kind of approach is lengthier and does not use the features of the algorithm which are already proved. 

We now describe the content of this paper. In the Preliminaries section we review the classical (and metaplectic) groups we are interested in, the Witt towers, and  the form of their parabolic subgroups. We recall the Langlands classification for the irreducible admissible representations of these groups, as well as their Jacquet modules. We also briefly recall the local Langlands correspondence, just as much as we need to use the results of \cite{Atobe_Gan}. The third section is the key technical part of the paper. In it, we introduce the algorithm which we use throughout the paper. This algorithm has two roles: on one hand, it finds the longest ladder as mentioned above; on the other hand, starting from the standard representation attached to $\pi$, it finds another representation which also possess $\pi$ as the unique irreducible quotient, but which is more appropriate for use in the theta correspondence than the standard representation. We also give several examples to further explain the algorithm and to comment on its reversibility and its resemblance to M\oe glin-Waldspurger algorithm (\cite{MW_Zelevinski}). Then, in the fourth section, we review the relevant facts concerning theta correspondence. In the fifth section, using the preparation done in the third section, we find the first occurrence index of a given representation $\pi$ in the given Witt tower. Finally, in the sixth section, we give the explicit description of the theta lifts of a representation $\pi,$ for both Witt towers in the pair. This is again proved using ladders and induction. We also provide an example to illustrate how the induction process works. In this example, we assume that the representation $\tau$ mentioned above is square-integrable---this simplifies the arguments which are obscured by technical difficulties when $\tau$ is tempered but not square-integrable. In the appendix, we prove two auxiliary lemmas which were stated in the the sixth section, but whose proofs were postponed in order to streamline the exposition of the main arguments.

\medskip
\noindent{\textbf{Acknowledgements}}\\
We are grateful to Wee Teck Gan for his warm hospitality at  the conference On the Langlands Program: Endoscopy and Beyond, NUS, IMS, Singapore, where we discussed some of the topics of this paper.
This work is  supported in part by Croatian Science Foundation under the project IP-2018-01-3628.

\section{Preliminaries}

\subsection{Groups}
\label{subs:groups}
Let $F$ be a nonarchimedean local field of characteristic $0$ and let $|\cdot|$ be the absolute value on $F$ (normalized as usual). The groups considered in this paper will be defined over $F$. For $\epsilon = \pm 1$ fixed, we let
\[
\begin{cases}
W_n = \text{a }(-\epsilon)\text{-Hermitian space of dimension }n\; \text{over } F,\\
V_m = \text{an }\epsilon\text{-Hermitian space of dimension }m\; \text{over } F.
\end{cases}
\]
When $\epsilon = 1$, this means that $W_n$ is symplectic, whereas $V_m$ is a quadratic space (we do not consider unitary groups in this paper). We consider isometry groups attached to the pair $(W_n,V_m)$.  We thus set
\[
G_n = G(W_n) = 
\begin{cases}
\text{the metaplectic group }\textnormal{Mp}(W_n), \quad &\text{if } \epsilon = 1 \text{ and } m \text{ is odd},\\
\text{the isometry group of } W_n \quad &\text{otherwise}
\end{cases}
\]
and define $H(V_m)$ similarly by switching the roles of $W_n$ and $V_m$. Here $\textnormal{Mp}(W_n)$ denotes the unique non-trivial two-fold covering of $Sp(W_n)$; cf.~\cite{Kudla1}, \cite{MVW_Howe}. Thus, from now on we use $G_n=G(W_n)$ (or $H(V_m)$) to denote $\textnormal{Sp}(n), \textnormal{Mp}(n), \textnormal{O}(m)$; this way, the notation is unified.
Furthermore, if $X$ is a vector space over $F$, we denote by $\textnormal{GL} (X)$ the general linear group of $X$. Note that all the groups defined here are totally disconnected locally compact topological groups.

\subsection{Witt towers}
\label{subs:Witt}
Every $\epsilon$-Hermitian space $V_m$ has a Witt decomposition
\[
V_m = V_{m_0} + V_{r,r} \quad (m = m_0 + 2r),
\]
where $V_{m_0}$ is anisotropic and $V_{r,r}$ is split (i.e.~a sum of $r$ hyperbolic planes). The space $V_{m_0}$ is unique up to isomorphism, and so is the number $r \geqslant 0$, which is called the Witt index of $V_m$. The collection of spaces
\[
\mathcal{V} = \{V_{m_0} + V_{r,r} \colon r \geqslant 0\}
\]
is called a Witt tower. Since, for the quadratic spaces, we have
\[
\det(V_{m_0+2r}) = (-1)^r\det(V_{m_0}) \in F^\times/(F^\times)^2,
\]
the quadratic character
\[
\chi_V(x) = (x, (-1)^{\frac{m(m-1)}{2}}\det(V))_F
\]
is the same for all the spaces $V$ in a single Witt tower (see \cite[\S V.1]{Kudla1}); here $(\cdot, \cdot)_F$ denotes the Hilbert symbol. In the case when $V$ is symplectic, we take $\chi_V$ to be the trivial character.

\subsection{Parabolic subgroups}
\label{subs:parabolic}
First, let $V_m$ be a quadratic space of dimension $m$. We may choose a subset $\{v_1, \dotsc, v_r,\allowbreak v_1',\dotsc, v_r'\}$ of $V_{m}$ such that $(v_i,v_j) = (v_i',v_j')=0$ and $(v_i,v_j') = \delta_{ij}$. Here $r$ denotes the Witt index of $V_m.$ We let $B=TU$ denote the standard $F$-rational Borel subgroup of $H(V_m)$, i.e.~the subgroup of $H(V_m)$ stabilizing the flag
\[
0 \subset \text{span}\{v_1\} \subset \text{span}\{v_1,v_2\}\subset \dotsb \subset \text{span}\{v_1,v_2\dotsc,v_r\}.
\]
Furthermore, for any $t \leq r$ we set $U_t = \text{span}\{v_1,\dotsc, v_t\}$ and $U_t' = \text{span}\{v_1',\dotsc, v_t'\}$; we can then decompose
\[
V_m = U_t \oplus V_{m-2t} \oplus U_t'
\]
The subgroup $Q_t$ of $H(V_m)$ which stabilizes $U_t$ is a maximal parabolic subgroup of $H(V_m)$; it has a Levi decomposition $Q_t = M_tN_t$, where $M_t = \textnormal{GL}(U_t) \times H(V_{m-2t})$ is the Levi component, i.e.~the subgroup of $Q_t$ which stabilizes $U_t'$. We often identify $\textnormal{GL}(U_t)$ with $\textnormal{GL}_t(F)$.

By letting $t$ vary, we obtain a set $\{Q_t: t \in\{1,\dotsc,r\}\}$ of standard maximal parabolic subgroups. By further partitioning $t$, we get the rest of the standard parabolic subgroups---generally, the Levi factor of a standard parabolic subgroup is of the form
\[
\textnormal{GL}_{t_1}(F) \times \dotsm \times \textnormal{GL}_{t_k}(F) \times H(V_{m-2t})  \quad (t = t_1 +\dotsb + t_k).
\]
The parabolic subgroups of $\textnormal{Sp}(W_n)$ are constructed in a similar fashion. Finally, the notion of parabolic subgroups is naturally extended to the case when $H(V_m)$ is the metaplectic group $\text{Mp}(V_m)$; see e.g.~III.2 in \cite{Kudla1}, \cite{Szpruch_PhD}. We denote the maximal standard parabolic subgroups of $G(W_n)$ and $H(V_m)$ by $P_t$ and $Q_t$, respectively.

\subsection{Representations}
\label{subs:reps}
Let $G = G(W_n)$ be one of the groups described in \S\ref{subs:groups}. By a representation of $G$ we mean a pair $(\pi,V)$ where $V$ is a complex vector space and $\pi$ is a homomorphism $G \to \textnormal{GL}(V)$. With $V_{\infty}$ we denote the subspace of $V$ comprised of all the smooth vectors, i.e.~those having an open stabilizer in $G$. Note that $V_{\infty}$ is a subrepresentation of $V.$
If $V = V_{\infty}$, we say that the representation $(\pi,V)$ is smooth. Unless otherwise stated, we will assume that all the representations are smooth; the category of all smooth complex representations of $G$ will be denoted by $\mathcal{A}(G)$. The set of equivalence classes of irreducible representations of $G$ will be denoted by $\Irr(G)$.

For each parabolic subgroup $P=MN$ of $G$ we have the (normalized) induction and localization (Jacquet) functors, $\Ind_P^G\colon \mathcal{A}(M) \to \mathcal{A}(G)$ and $R_P \colon \mathcal{A}(G) \to \mathcal{A}(M)$. These are related by the standard Frobenius reciprocity
\[
\Hom_G(\pi, \Ind_P^G(\pi')) \cong \Hom_M(R_P(\pi), \pi')
\]
and by the second (Bernstein) form of Frobenius reciprocity,
\[
\Hom_G(\Ind_P^G(\pi'), \pi) \cong \Hom_M(\pi', R_{\overline{P}}(\pi))
\]
(here $\overline{P} = M\overline{N}$ is the parabolic subgroup opposite to $P$).
If $P=MN$ is a parabolic subgroup of $G(W_n)$ with Levi factor $M = \textnormal{GL}_{t_1}(F) \times \dotsm \times \textnormal{GL}_{t_k}(F) \times G(W_{n-2t})$, we use
\[
\tau_1 \times \dotsm \times \tau_k \rtimes \pi_0
\]
to denote $\Ind_P^G(\tau_1 \otimes \dotsm \otimes \tau_k \otimes \pi_0)$, where $\tau_i$ is a representation of $\textnormal{GL}_{t_i}(F)$, $i = 1,\dotsc,k$, and $\pi_0$ is a representation of $G(W_{n-2t})$ (with $t = t_1 +\dotsb+t_k$). We use  analogous (Zelevinsky)  notation for the parabolic induction for the  general linear groups.

This is a slight abuse of notation in the case of metaplectic groups, since a Levi subgroup is not necessarily the product of general linear factors and a smaller metaplectic group, cf.~\cite{Szpruch_PhD}, the second section, or \cite{Hanzer_Muic_metaplectic_Jacquet}, the second section. In this case, we view $\tau_i$ as a representation of the two-fold cover of  $\textnormal{GL}_{t_i}(F),$ denoted by $\widetilde{\textnormal{GL}_{t_i}(F)},$ obtained by twisting a representation of $\textnormal{GL}_{t_i}(F)$ by the (genuine) character $\chi_{\psi}(g,\epsilon) = \epsilon \gamma(det g,\frac{1}{2}\psi)^{-1}$ of $\widetilde{\textnormal{GL}_{t_i}(F)}.$ Here $\psi$ is a non-trivial additive character of $F$ which will appear in the Howe correspondence, and $\gamma$ is related to the Weil index of a character of second degree, cf. \cite{Kudla1}, I.4. The same applies for the Jacquet functor for the metaplectic groups, cf.~\cite{Hanzer_Muic_metaplectic_Jacquet}, 4.2.

To obtain a complete list of irreducible representations of $G(W_n)$, we use the Langlands classification. Let $\delta_i \in \textnormal{GL}_{t_i}(F), i = 1,\dotsc, r$ be irreducible discrete series representations, and let $\tau$ be an irreducible tempered representation of $G(W_{n-2t})$, where $t=t_1+\dotsb+t_r$. Any representation of the form
\[
\nu^{s_r}\delta_r \times \dotsb \times \nu^{s_1}\delta_1 \rtimes \tau,
\]
where $s_r \geqslant \dotsb \geqslant s_1 > 0$ (and where $\nu$ denotes the character $\lvert\det\rvert$ of the corresponding general linear group) is called a standard representation (or a standard module). It possesses a unique irreducible quotient, the so-called Langlands quotient, denoted by $L(\nu^{s_r}\delta_r, \dotsc, \allowbreak \nu^{s_1}\delta_1; \allowbreak \tau)$. Conversely, every irreducible representation can be represented as the Langlands quotient of a unique standard representation. In this way, we obtain a complete description of $\Irr(G(W_n))$. The Langlands classification is also valid for the metaplectic groups in an analogous form, cf.~\cite{Ban_Jantzen-Langl_class}.

We will use this (quotient) form of the Langlands classification interchangeably with the subrepresentation form, by means of the Gefand-Kazhdan results for general linear groups and the M\oe glin-Vigneras-Waldspurger involution through the following lemma (see \cite[Lemma 2.2]{Atobe_Gan}).

\begin{lem}
\label{lemma:MVWinv}
Let $\tau_i \in \Irr(\textnormal{GL}_{t_i}(F))$, $i=1,\dotsc,r$ and $\pi_0 \in \Irr(G(W_{n_0}))$. Let $P$ be a standard parabolic subgroup of $G(W_n)$ ($n = n_0 + 2\sum t_i$) with Levi component equal to $\textnormal{GL}_{t_1}(F) \times \dotsm \times \textnormal{GL}_{t_r}(F) \rtimes G(W_{n_0})$. Then, for any $\pi \in \Irr(G(W_{n}))$, the following statements are equivalent:
\begin{enumerate}[(i)]
\item $\pi \hookrightarrow \tau_1 \times \dotsm \times \tau_r \rtimes \pi_0$; 
\item $\tau_1^\vee \times \dotsm \times \tau_r^\vee \rtimes \pi_0 \twoheadrightarrow \pi$.
\end{enumerate}
Similarly, let $P'$ be a standard parabolic subgroup of $\textnormal{GL}_{m}(F)$ ($m = \sum t_i$) with Levi component equal to $\textnormal{GL}_{t_1}(F) \times \dotsm \times \textnormal{GL}_{t_r}(F)$. Then, for any $\pi' \in \Irr(\textnormal{GL}_{m}(F))$ the following statements are equivalent:
\begin{enumerate}
\item[(iii)]	$\pi' \hookrightarrow \tau_1 \times \dotsm \times \tau_r$; 
\item[(iv)] $\tau_r \times \dotsm \times \tau_1 \twoheadrightarrow \pi'.$
\end{enumerate}
\end{lem}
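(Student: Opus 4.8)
\medskip
\noindent The plan is to deduce the lemma from three standard facts: exactness of the smooth contragredient functor on admissible representations (which turns injections into surjections and conversely), the Gelfand--Kazhdan theorem for general linear groups, and the M\oe glin--Vigneras--Waldspurger involution for the classical and metaplectic groups $G(W_k)$. The two inputs I would record at the outset are that the contragredient commutes with normalized parabolic induction, so that $(\tau_1\times\dots\times\tau_r\rtimes\pi_0)^\vee\cong\tau_1^\vee\times\dots\times\tau_r^\vee\rtimes\pi_0^\vee$ and, on $\GL_m(F)$, $(\tau_1\times\dots\times\tau_r)^\vee\cong\tau_1^\vee\times\dots\times\tau_r^\vee$ with the order of the factors preserved; and that by MVW every irreducible representation of a group $G(W_k)$ is self-contragredient, in particular $\pi^\vee\cong\pi$ and $\pi_0^\vee\cong\pi_0$.

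For (i)$\iff$(ii): given (i), I would apply the contragredient functor to the injection $\pi\hookrightarrow\tau_1\times\dots\times\tau_r\rtimes\pi_0$ to obtain the surjection $\tau_1^\vee\times\dots\times\tau_r^\vee\rtimes\pi_0^\vee\twoheadrightarrow\pi^\vee$, and then substitute $\pi_0^\vee\cong\pi_0$ and $\pi^\vee\cong\pi$ to arrive at (ii); since contragredient is an involution on admissibles, the same computation run backwards gives (ii)$\Rightarrow$(i). In the metaplectic case I would additionally check that the MVW automorphism lifts to the covering group and is compatible with the genuine twist $\chi_\psi$; as it acts on the $\GL$-factors through $g\mapsto{}^tg^{-1}$ and $\det({}^tg^{-1})=\det(g)^{-1}$, the only effect on $\tau_i$ is passage to the contragredient, which is already present in the statement.

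For (iii)$\iff$(iv), self-duality of $\pi'$ is no longer available, so I would use the Gelfand--Kazhdan theorem in the form: the automorphism $\iota$ of $\GL_m(F)$ given by $\iota(g)={}^tg^{-1}$ satisfies $\rho^\iota\cong\rho^\vee$ for every irreducible $\rho$. Starting from $\pi'\hookrightarrow\tau_1\times\dots\times\tau_r$ and applying $\iota$, one uses that $\iota$ sends the standard parabolic with Levi $\GL_{t_1}(F)\times\dots\times\GL_{t_r}(F)$ to its opposite, so that conjugating back by the longest Weyl element returns a standard parabolic but with the blocks in reverse order, giving $(\pi')^\iota\hookrightarrow\tau_r^\iota\times\dots\times\tau_1^\iota$; replacing each $\iota$ by $\vee$ and dualizing once more yields $\tau_r\times\dots\times\tau_1\twoheadrightarrow\pi'$, i.e.\ (iv), and every step is reversible.

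The step I expect to need the most care is keeping track of the order of the general linear factors: in the classical/metaplectic case the MVW involution preserves the order in which the $\GL$-blocks are stacked inside $G(W_n)$, so no reversal occurs in passing from (i) to (ii), whereas for $\GL_m(F)$ the transpose-inverse automorphism reverses the block order, which is exactly what produces the passage from the list $\tau_1,\dots,\tau_r$ to $\tau_r,\dots,\tau_1$ in (iv). Apart from that, one only needs the contragredient functor to behave well---exactness, commutation with induction, and $\rho^{\vee\vee}\cong\rho$---which is automatic since parabolically induced representations of admissible representations are admissible.
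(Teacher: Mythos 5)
The paper does not prove this lemma itself but cites it as Lemma 2.2 of Atobe--Gan, remarking that it follows from the Gelfand--Kazhdan results for general linear groups and the M\oe glin--Vigneras--Waldspurger involution. Your reconstruction rests on exactly those two ingredients, together with the standard facts that the contragredient is an exact involution on admissible representations and commutes with normalized parabolic induction, so it matches the intended proof; the one point meriting extra care, which you correctly flag, is that in the metaplectic case one must verify that the MVW involution lifts to the twofold cover and interacts correctly with the genuine twist $\chi_\psi$ (and hence with the choice of $\psi$), which is precisely what Atobe--Gan address.
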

Here, as well as in the rest of the paper, $\tau^\vee$ denotes the contragredient representation.

\subsection{Local Langlands Correspondence}
\label{subs:LLC}
Another way of classifying the irreducible representations of $G(W_n)$ is by means of the Local Langlands Correspondence (LLC). We use it mainly to harvest the results on lifts of tempered representations established by Atobe and Gan in \cite{Atobe_Gan}. Without going into detail, we give a brief description of the basic features of LLC; a concise overview of the theory along with the key references can be found in appendices A and B of \cite{Atobe_Gan}.

The LLC parametrizes $\Irr(G(W_n))$ by representations of the Weil-Deligne group, $\mathrm{WD}_F = \mathrm{W}_F \times \textnormal{SL}_2(\mathbb{C})$ (here $W_F$ denotes the Weil group of $F$). More precisely, we define $\Phi(G(W_n))$, for any $n$, as a set of (equivalence classes of) admissible homomorphisms:
\[
\begin{cases}
\Phi(\textnormal{O}(W_n)) = \{\phi \colon \mathrm{WD}_F \to \textnormal{Sp}(n-1,\mathbb{C})\} / \cong, \quad \text{if } n \text{ is odd},\\
\Phi(\textnormal{Sp}(W_n)) = \{\phi \colon \mathrm{WD}_F \to \textnormal{SO}(n+1,\mathbb{C})\} / \cong,\\
\Phi(\textnormal{O}(W_n)) = \{\phi \colon \mathrm{WD}_F \to \textnormal{O}(n,\mathbb{C}) | \det(\phi) = \chi_W\} / \cong \quad \text{if } n \text{ is even},\\
\Phi(\textnormal{Mp}(W_n)) = \{\phi \colon \mathrm{WD}_F \to \textnormal{Sp}(n,\mathbb{C})\} / \cong.
\end{cases}
\]
The irreducible representations of $G(W_n)$ are then parametrized by the so-called $L$-pa\-ra\-me\-ters, i.e.~pairs of the form $(\phi,\eta)$, where $\phi \in \Phi(G(W_n))$, and $\eta$ is a character of the (finite) component group of the centralizer of $\text{Im}(\phi)$. The set of representations which correspond to the same $\phi$ is called the $L$-packet attached to $\phi$. 

Any $\phi \in \Phi(G(W_n))$ can be decomposed as 
\[
\phi = \bigoplus_{n \geqslant 1} \phi_n \otimes S_n, 
\]
where $\phi_n$ is a representation of $W_F$, whereas $S_n$ denotes the unique algebraic representation of $\textnormal{SL}_2(\C)$ of dimension $n$. For an irreducible representation $\rho \otimes S_n$ of $ \mathrm{WD}_F,$ we denote by $m_{\phi}(\rho \otimes S_n)$  its multiplicity in the parameter $\phi.$ Tempered representations are parametrized by pairs $(\phi,\eta)$ in which $\phi(W_F)$ is bounded (for full odd orthogonal groups we need an extra ingredient $\nu\in\{1,-1\})$; cf.~\cite{Atobe_Gan}, Section 3); among those, the multiplicity free parameters of correct parity correspond to discrete series representations.

Note that, unlike $\phi$, the choice of $\eta$ is non-canonical: it depends on the choice of a Whittaker datum of $G(W_n)$ which we fix in the manner explained in  \cite[Remark B.2]{Atobe_Gan}. For metaplectic groups, it also depends on the choice of an additive character $\psi$ related to theta correspondence; cf.~\cite{Gan_Savin_Metaplectic_2012}, \cite{Atobe_Gan}, B.4 and Theorem B.8.

\subsection{Computing Jacquet modules}
\label{subs:computingJM}
On a number of occasions we shall need to compute the Jacquet modules of various representations. We let $R_n, n\geq 0$ denote the Grothendieck group of admissible representations of $\GL_n(\F)$ of finite length; we also set $
R = \oplus_{n\geq 0} R_n$.

For $\pi_1 \in \Irr(\GL_{n_1}(\F))$ and $\pi_2 \in \Irr(\GL_{n_2}(\F))$ the pairing
\[
(\pi_1, \pi_2) \mapsto \pi_1 \times \pi_2
\]
defines an additive mapping $\times \colon R_{n_1} \times R_{n_2} \to R_{n_1+n_2}$. We extend the mapping $\times$ to a multiplication on $R$ in a natural way.

On the other hand, for any $\pi \in \Irr(\GL_n(\F))$ we may identify $R_{P_k}(\pi)$ with its semi-simplification in $R_k \otimes R_{n-k} \hookrightarrow R\otimes R$ (here $P_k$ temporarily denotes the $k$-th maximal standard parabolic subgroup of $\GL_n(\F)$). We define
\[
m^*(\pi) = (\mathbb{1}\otimes \pi) \oplus (\pi \otimes \mathbb{1})  \oplus  \sum_{k=1}^{n-1}R_{P_{k}}(\pi) \in R\otimes R
\]
and extend $m^*$ to an additive map $R \to R\otimes R$. So, from now on, ``$=$" denotes the equality in the appropriate Grothendieck group. The basic fact due to Zelevinsky (see Section 1.7 of \cite{Zelevinsky_GL} for additional details) is that
\begin{equation}
\label{eq_mstar}
m^*(\pi_1 \times \pi_2) = m^*(\pi_1) \times m^*(\pi_2).
\end{equation}
In most cases, we will consider $m^*(\pi)$ when $\pi = \delta([\nu^a\rho, \nu^b\rho])$, i.e.~the discrete series representation attached to the segment $[\nu^a\rho, \nu^b\rho]$, or $\pi = \zeta(\nu^a\rho, \nu^b\rho)$, i.e.~the Langlands quotient of $\nu^b\rho \times \dotsm \times \nu^a\rho$, where $b-a \in \mathbb{Z}_{\geq 0}$ (we review these representations in \S\ref{subs_aux}). In those cases, we have
\begin{equation}
\label{eq_mstar2}
\begin{aligned}
m^*(\delta([\nu^a\rho, \nu^b\rho])) &= \sum_{i=a-1}^b \delta([\nu^{i+1}\rho, \nu^b\rho]) \otimes \delta([\nu^a\rho, \nu^i\rho]),\\
m^*(\zeta(\nu^a\rho, \nu^b\rho)) &= \sum_{i=a-1}^b \zeta(\nu^a\rho, \nu^i\rho) \otimes \zeta(\nu^{i+1}\rho, \nu^b\rho).
\end{aligned}
\end{equation}
In the above equations we set $\delta([\nu^c\rho,\nu^{c-1}]) = \zeta(\nu^{c},\nu^{c-1}) = \mathbb{1} \in \Irr(\GL_0(\F))$ for any $c\in \mathbb{R}$.

This theory was extended by Tadi\'{c} to the case of classical groups in \cite{Tad_struc}. For any $\pi \in \Irr(G_n)$ we let $\mu^*(\pi)$ be the sum of the semi-simplifications of $R_P(\pi)$ when $P$ varies over the set of standard parabolic subgroups of $G_n$. The relevant formula is now
\begin{equation}
\label{eq_tadic_classical}
\mu^*(\delta \rtimes \pi) = M^*(\delta) \rtimes \mu^*(\pi).
\end{equation}
The definition of $M^*$ can be found in \cite[Theorem 5.4]{Tad_struc}, but we shall need it here only in the special case when $\delta = \delta([\nu^a\rho, \nu^b\rho])$ or $\zeta(\nu^a\rho, \nu^b\rho)$; in these cases, we have (\cite[\S 14]{tadic2012reducibility})
\begin{equation}
\label{eq_tadic_classical2}
\begin{aligned}
M^*(\delta([\nu^a\rho, \nu^b\rho])) &= \sum_{i=a-1}^b\sum_{j=i}^b \delta([\nu^{-i}\rho^\vee,\nu^{-a}\rho^\vee]) \times \delta([\nu^{j+1}\rho,\nu^b\rho]) \otimes \delta([\nu^{i+1}\rho, \nu^j\rho])\\
M^*(\zeta(\nu^a\rho, \nu^b\rho)) &=  \sum_{i=a-1}^b\sum_{j=i}^b \zrep{\nu^{-b}\rho^\vee}{\nu^{-(j+1)}\rho^\vee} \times \zrep{\nu^{a}\rho}{\nu^{i}\rho} \otimes \zrep{\nu^{i+1}\rho}{\nu^j\rho}.
\end{aligned}
\end{equation}

\section{Rearranging the standard module}
One of the key steps in our approach is a careful rearrangement of the standard module. In order to justify it, we first prove some auxiliary results.

\subsection{Auxiliary results on irreducibility}
\label{subs_aux}
Recall that irreducible essentially discrete series representations of $\textnormal{GL}(F)$ correspond to segments of cuspidal representations $[\nu^a\rho,\nu^b\rho]$ with $b-a \in \mathbb{Z}_{\geq 0}$ and $\rho$ a unitary cuspidal representation. More precisely, for any such segment, the representation
\begin{equation}
\label{eq_segment}
\nu^b\rho \times \nu^{b-1}\rho \times \dotsm \times \nu^a\rho
\end{equation}
possesses a unique irreducible subrepresentation denoted $\delta([\nu^a\rho,\nu^b\rho])$. This is an essentially discrete series representation. Conversely, any irreducible essentially discrete series representation corresponds to a unique segment in this way. It is important to note that \eqref{eq_segment} also has a unique irreducible quotient (i.e.~the Langlands quotient), which we denote $\zeta(\nu^a\rho,\nu^b\rho)$. The key facts concerning $\delta$ and $\zeta$ may be found in Zelevinsky's paper \cite{Zelevinsky_GL}. Throughout this section, we freely use Zelevinsky's terminology and results on linked segments.

We begin by examining the relation between $\delta$ and $\zeta$. We say that the segments $[\nu^a\rho,\nu^b\rho]$ and $[\nu^{a'}\rho',\nu^{b'}\rho']$ are adjacent if $\rho = \rho'$, and $a' = b+1$ or $b' = a+1$.
\begin{lem}
\label{lemma_zel0}
Let $[\nu^a\rho,\nu^b\rho]$ and $[\nu^{a'}\rho',\nu^{b'}\rho']$ be segments which are not adjacent. Then
\[
\delta([\nu^a\rho,\nu^b\rho]) \times \zeta(\nu^{a'}\rho',\nu^{b'}\rho') \quad \text{and} \quad \zeta(\nu^{a'}\rho',\nu^{b'}\rho') \times \delta([\nu^a\rho,\nu^b\rho])
\]
are irreducible and isomorphic.
\end{lem}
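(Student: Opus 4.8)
The plan is to reduce the statement to a standard computation with linked segments in the Zelevinsky theory, since the representations $\delta([\nu^a\rho,\nu^b\rho])$ and $\zeta(\nu^{a'}\rho',\nu^{b'}\rho')$ are both (up to Zelevinsky duality) constituents of induced representations built from segments on the same cuspidal line. First I would observe that if $\rho \not\cong \rho'$ (or more precisely if the two cuspidal lines $\mathbb{Z}\rho$ and $\mathbb{Z}\rho'$ are disjoint), then the product of any irreducible representation supported on one line with any irreducible representation supported on the other is irreducible, and the two orderings are isomorphic; this is the classical fact that inductions from ``coprime'' cuspidal supports are irreducible and independent of order. So the only case requiring work is $\rho = \rho'$, and then everything lives on the single line $\{\nu^x\rho : x \in \mathbb{R}\}$; I may as well normalize and think purely in terms of the segments $\Delta = [a,b]$ and $\Delta' = [a',b']$ of integers (or of a fixed coset of $\mathbb{Z}$).

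Next I would recall Zelevinsky's irreducibility criterion: for segments on the same line, $\delta(\Delta) \times \delta(\Delta')$ is irreducible if and only if $\Delta$ and $\Delta'$ are not linked, and likewise a product involving the Langlands quotients $\zeta$ is irreducible under an analogous ``not linked'' condition once one passes through the Zelevinsky involution, which sends $\delta(\Delta) \mapsto \zeta(\Delta)$ and is an exact ring automorphism of $R$ (it preserves irreducibility and commutes with $\times$ up to the relevant bookkeeping). The key point is to translate the hypothesis ``$[\nu^a\rho,\nu^b\rho]$ and $[\nu^{a'}\rho',\nu^{b'}\rho']$ are not adjacent'' into a non-linkedness statement. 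Two segments are \emph{linked} when neither contains the other but their union is again a segment; being \emph{adjacent} in the sense of the paper (i.e. $a' = b+1$ or $b' = a+1$) is exactly the sub-case of linkedness in which the two segments are disjoint but their union is a segment. The subtlety is that $\delta(\Delta)$ and $\zeta(\Delta')$ can also become problematic when the segments \emph{intersect}; but here the relevant ``mixed'' product $\delta(\Delta) \times \zeta(\Delta')$ is governed by whether $\Delta$ and $\Delta'$ are linked \emph{after} applying the involution to one factor, which flips the combinatorics so that the obstruction becomes precisely adjacency. I would make this precise by citing the standard results on $\delta$–$\zeta$ products (e.g. Zelevinsky, or the formulation in Lapid--Mínguez), noting that $\delta(\Delta) \times \zeta(\Delta')$ is irreducible iff $\Delta, \Delta'$ are not ``adjacent'' in the above sense, and then irreducibility of the product forces the two orderings to coincide (an irreducible subquotient that is a quotient of one ordering is a sub of the other by Frobenius reciprocity / the fact that $\pi^\vee$-type symmetry interchanges the orderings, and irreducibility upgrades ``subquotient'' to ``isomorphic'').

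The two orderings being isomorphic, given irreducibility, is then essentially formal: $\delta(\Delta) \times \zeta(\Delta')$ and $\zeta(\Delta') \times \delta(\Delta)$ have the same semisimplification in the Grothendieck group (parabolic induction is independent of the ordering of the Levi factors at the level of $R$, by \eqref{eq_mstar} and the commutativity of $\times$ on $R$), so if each is irreducible they are isomorphic. Thus the whole proof is: (1) dispose of the $\rho \not\cong \rho'$ case by coprime-support irreducibility; (2) on a single line, quote the $\delta$–$\zeta$ linkedness/irreducibility criterion and check that ``not adjacent'' is exactly the hypothesis needed; (3) conclude isomorphism of the two orders from equality in $R$ plus irreducibility. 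I expect the main obstacle to be step (2): one must be careful about the precise normalization in which ``adjacent'' (disjointness plus union-is-a-segment) is the right condition for the \emph{mixed} $\delta \times \zeta$ product, as opposed to the $\delta \times \delta$ case where the condition is the full ``not linked''; getting the interaction of the Zelevinsky involution with this exactly right — in particular ruling out the intersecting-segments case, where $\delta(\Delta)$ and $\zeta(\Delta')$ might a priori fail to be irreducible even though the segments are not adjacent — is where I would need to be most careful and where I would lean most heavily on the cited literature on linked segments.
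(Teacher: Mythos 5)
Your overall structure is sound, and you correctly locate the real content: the reduction to a single cuspidal line, the observation that equality in the Grothendieck ring plus irreducibility of both orderings forces an isomorphism, and the identification of the irreducibility criterion for the mixed $\delta \times \zeta$ product as the crux. The paper's own proof is a one-line citation to Lemma I.6.3 of M\oe glin's paper, which is precisely the self-standing result for mixed $\delta$--$\zeta$ products that you correctly identify as the thing you need; so, modulo the remarks below, your ``step (2)'' reduces to exactly that citation.

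However, the heuristic you offer for why ``not adjacent'' is the right condition does not actually work and should be dropped. You suggest that applying the Zelevinsky involution ``flips the combinatorics'' so that the obstruction for the mixed product is seen to be adjacency rather than full linkedness. The involution is a ring homomorphism on $R$ preserving irreducibility, so it sends $\delta(\Delta) \times \zeta(\Delta')$ to $\zeta(\Delta) \times \delta(\Delta')$; that is still a mixed $\delta$--$\zeta$ product, not a $\delta\times\delta$ or $\zeta\times\zeta$ product, so you gain nothing. There is no reduction of the mixed criterion to the $\delta\times\delta$ criterion (``not linked'') via duality, and the intersecting-but-not-nested case (linked but not adjacent) genuinely requires its own argument. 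Since you already flag this as the place you would ``lean most heavily on the cited literature,'' the fix is simply to drop the involution heuristic entirely and cite the mixed-product irreducibility result (M\oe glin I.6.3, or equivalently the corresponding statements in Lapid--M\'inguez) as the lemma it is, which is what the paper does.
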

\begin{proof}
This is a special case of Lemma I.6.3 of \cite{Moeglin1989}.
\end{proof}

We are only interested in segments defined by $\rho = \mathbb{1}$, the trivial representation of $\GL_1(F)$. We therefore adjust our notation and set
\[
\drep{a}{b} = \delta([|\cdot|^a,|\cdot|^b]) \quad \text{and} \quad \zrep{a}{b} = \zeta([|\cdot|^a,|\cdot|^b]) 
\]
for any $a\leq b$ such that $b-a \in \mathbb{Z}$.
%If $a > b$, we consider $\drep{a}{b}$ and $\zrep{a}{b}$ to be trivial representations of the trivial group. 
We now examine the case in which the segments are adjacent.
\begin{lem}
\label{lem_zel1}
Let $[a,b]$ and $[b+1,d]$ be adjacent segments. Then the representation
\[
\zrep{b+1}{d} \times \drep{a}{b}
\]
is of length two. Furthermore,
\begin{itemize}
\item its unique irreducible quotient is the Langlands quotient of
\[
|\cdot|^d \times |\cdot|^{d-1} \times \dotsm \times  |\cdot|^{b+1} \times \drep{a}{b};
\]
it is also the unique irreducible quotient of $\drep{a}{b-1}\times\zrep{b}{d}$.
\item Its unique irreducible subrepresentation is the Langlands quotient of
\[
|\cdot|^d \times |\cdot|^{d-1} \times \dotsm \times  |\cdot|^{b+2} \times \drep{a}{b+1};
\]
at the same time, it is the unique irreducible quotient of $\drep{a}{b}\times\zrep{b+1}{d}$.
\end{itemize} 
\end{lem}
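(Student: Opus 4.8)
The plan is to determine the composition series of $\zrep{b+1}{d}\times\drep{a}{b}$ first, then to locate its socle and cosocle, and finally to deduce the two ``also'' assertions by passing to the opposite order of the factors. \emph{Composition series.} Since $[a,b]$ and $[b+1,d]$ are adjacent, they are linked, so $\drep{a}{b}$ and $\zrep{b+1}{d}$ are the irreducible representations attached to a pair of linked segments, and Zelevinsky's analysis of products of two segment representations applies. From it (or, alternatively, from a direct computation with \eqref{eq_mstar}--\eqref{eq_mstar2} of the Jacquet module of $\zrep{b+1}{d}\times\drep{a}{b}$ along the parabolic with Levi $\GL_{d-b}(F)\times\GL_{b-a+1}(F)$, comparing with the Jacquet modules of the possible constituents) one reads off that $\zrep{b+1}{d}\times\drep{a}{b}$ has length exactly two, with composition factors $L_1$ and $L_2$, each occurring once, where $L_1$ denotes the Langlands quotient of $|\cdot|^d\times\cdots\times|\cdot|^{b+1}\times\drep{a}{b}$ and $L_2$ the Langlands quotient of $|\cdot|^d\times\cdots\times|\cdot|^{b+2}\times\drep{a}{b+1}$. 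Both of these are genuine standard modules---the exponents of the $\GL_1$-factors strictly decrease and stay strictly above the exponent of the essentially square-integrable tail, because $a\le b$---so in particular $L_1\not\cong L_2$.

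\emph{Socle and cosocle.} As $\zrep{b+1}{d}$ is the unique irreducible quotient of $|\cdot|^d\times\cdots\times|\cdot|^{b+1}$, the representation $\zrep{b+1}{d}\times\drep{a}{b}$ is a quotient of the standard module $S:=|\cdot|^d\times\cdots\times|\cdot|^{b+1}\times\drep{a}{b}$; the cosocle of $S$ is its Langlands quotient $L_1$, and since $L_1$ occurs in $\zrep{b+1}{d}\times\drep{a}{b}$ with multiplicity one, $L_1$ must be its cosocle. With the length equal to two and $L_1\not\cong L_2$, this forces $\zrep{b+1}{d}\times\drep{a}{b}$ to be uniserial with socle $L_2$. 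For the opposite order, $\zrep{b+1}{d}$ is the unique irreducible subrepresentation of $|\cdot|^{b+1}\times\cdots\times|\cdot|^d$, so $\drep{a}{b}\times\zrep{b+1}{d}$ embeds into the reversed standard module $\overline{S}:=\drep{a}{b}\times|\cdot|^{b+1}\times\cdots\times|\cdot|^d$, whose unique irreducible subrepresentation is $L_1$ (the socle of a reversed standard module is the Langlands quotient of the standard module). Thus $L_1\hookrightarrow\drep{a}{b}\times\zrep{b+1}{d}$; since this representation has the same two constituents as before and cannot be semisimple---otherwise $L_2$ would also embed into $\overline{S}$, contradicting irreducibility of its socle---it too is uniserial, with socle $L_1$ and cosocle $L_2$. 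This yields the ``unique irreducible quotient'' statement of the first bullet, the ``unique irreducible subrepresentation'' statement of the second, and the assertion that $L_2$ is the unique irreducible quotient of $\drep{a}{b}\times\zrep{b+1}{d}$.

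\emph{The remaining assertion.} It remains to see that $L_1$ is also the unique irreducible quotient of $\drep{a}{b-1}\times\zrep{b}{d}$; for this I would repeat the two previous steps for the adjacent pair $[a,b-1]$, $[b,d]$ in place of $[a,b]$, $[b+1,d]$. That identifies $\drep{a}{b-1}\times\zrep{b}{d}$ as uniserial of length two with cosocle equal to the Langlands quotient of $|\cdot|^d\times\cdots\times|\cdot|^{b+1}\times\drep{a}{b}$, which is exactly $L_1$. (If $b=a$ this pair degenerates: $\zrep{b}{d}$ is then irreducible and both sides of the claimed identity reduce directly to $\zrep{a}{d}$.)

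The one step carrying real content is the first: proving that the length is exactly two and that the two constituents are the specified Langlands quotients. Everything afterwards is formal module theory---the cosocle comes from the realization as a quotient of $S$, the socle from the count of constituents, and the auxiliary descriptions from reversing the order of the two factors---so I expect the composition-series input to be the only genuine obstacle.
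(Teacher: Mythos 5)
Your proof is correct. The paper's own argument establishes the length-two claim by the same kind of Jacquet-module computation (it uses $R_{P_1}$, whose length equals the length of $\zrep{b+1}{d}\times\drep{a}{b}$ most directly) and then explicitly ``leaves the proof of the second part of the lemma to the reader''---that second part is exactly the socle/cosocle identification and reversed-order analysis you supply via Lemma~\ref{lemma:MVWinv}.
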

\begin{proof}
In this case, the representation $\pi = \zrep{b+1}{d} \times \drep{a}{b}$ reduces. Moreover, formulas \eqref{eq_mstar} and \eqref{eq_mstar2} show that $R_{P_1}(\pi)$ is of lentgh two, so $\pi$ is also of length two. We leave the proof of the second part of the lemma to the reader.
\end{proof}
We also make use of the following lemma. If $[b,d]$ and $[d+1,e]$ are adjacent segments, the above lemma shows that $\zrep{d+1}{e} \times \drep{b}{d}$ has a unique irreducible quotient, which we now denote by $L$.
\begin{lem}
\label{lem_zel2}
\begin{enumerate}[(i)]
\item For any $a \leq b$, the representation $L \times \drep{a}{d}$ is irreducible. In particular, $L \times \drep{a}{d} {\cong} \drep{a}{d} \times L$.
\item For any $c \geq b$, the representation $L \times \drep{c}{d-1}$ is irreducible. In particular, $L \times \drep{c}{d-1} {\cong} \drep{c}{d-1} \times L$.
\end{enumerate}
\end{lem}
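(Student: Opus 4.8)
The plan is to present $L\times\drep{a}{d}$ (and likewise $\drep{a}{d}\times L$) as the unique irreducible quotient of a standard module and, dually, as the unique irreducible subrepresentation of another standard module, to observe that these coincide, and thereby to conclude irreducibility; the ``in particular'' assertions then follow automatically. Indeed, the ring $R$ of \cite{Zelevinsky_GL} is commutative, so $L\times\drep{a}{d}$ and $\drep{a}{d}\times L$ have the same image in $R$; once one of them is known to be irreducible the other has a single composition factor, of multiplicity one, hence is isomorphic to it. So it suffices to prove irreducibility.

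Recall from Lemma \ref{lem_zel1} that $L$ is the Langlands quotient of $\sigma:=|\cdot|^e\times|\cdot|^{e-1}\times\cdots\times|\cdot|^{d+1}\times\drep{b}{d}$; in particular $L$ is a ladder representation in the sense of \cite{Lapid_Minguez_determinantal_Tadic}, with Langlands data the segments $[b,d],[d+1,d+1],\dots,[e,e]$, whose initial and terminal points are strictly decreasing. For part (i): since $a\leqslant b$ we have $[b,d]\subseteq[a,d]$, so the exponents of the factors of $\sigma\times\drep{a}{d}$ are weakly decreasing (in the sense of the Langlands--Zelevinsky classification for $\textnormal{GL}$), i.e.\ $\sigma\times\drep{a}{d}$ is a standard module. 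As $\sigma\twoheadrightarrow L$, also $\sigma\times\drep{a}{d}\twoheadrightarrow L\times\drep{a}{d}$, so $L\times\drep{a}{d}$ has simple cosocle equal to the Langlands quotient $\Lambda$ of $\sigma\times\drep{a}{d}$, and $\Lambda$ occurs in $L\times\drep{a}{d}$ with multiplicity one (since it does in $\sigma\times\drep{a}{d}$). Passing to contragredients, $(L\times\drep{a}{d})^\vee$ is again of the form (ladder representation)$\,\times\,$(essentially square-integrable representation), and running the same argument for it should show that it too is the unique irreducible quotient of a standard module, necessarily with Langlands quotient $\Lambda^\vee$; hence $L\times\drep{a}{d}$ has simple socle $\Lambda$ as well. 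A module whose socle and cosocle both equal $\Lambda$ and in which $\Lambda$ has multiplicity one must be $\Lambda$ itself: otherwise its quotient by the socle is nonzero, its cosocle is again $\Lambda$, and $\Lambda$ would occur twice. So $L\times\drep{a}{d}\cong\Lambda$ is irreducible. Part (ii) is the same with $\drep{c}{d-1}$ in place of $\drep{a}{d}$, using $[c,d-1]\subseteq[b,d]$ (as $c\geqslant b$).

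The real content, and the main obstacle, is the contragredient step --- showing the socle is simple and equals $\Lambda$. Writing $L^\vee$ as the Langlands quotient of the ladder standard module $\tau:=\drep{-d}{-b}\times|\cdot|^{-(d+1)}\times\cdots\times|\cdot|^{-e}$, one needs $\tau\times\drep{-(d-1)}{-c}$ (resp.\ $\tau\times\drep{-d}{-a}$) to be rearrangeable into a standard module. In case (ii), $[c,d-1]$ is separated from $[d+1,e]$ by the point $d$, hence is unlinked to every segment of the ladder of $L$, so $\drep{-(d-1)}{-c}$ commutes past every factor of $\tau$ by Lemma \ref{lemma_zel0} and the rearrangement is unobstructed; the proof then goes through verbatim. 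In case (i), however, $[a,d]$ is \emph{adjacent} to $[d+1,e]$: the factor $|\cdot|^{-(d+1)}$ lies between $\drep{-d}{-b}$ and $\drep{-d}{-a}$ and does not commute with the latter, so no shuffling yields the required standard module. This is overcome either by invoking the Lapid--M\'inguez irreducibility criterion for a product of a ladder representation with an essentially square-integrable representation --- the hypothesis $a\leqslant b$ is precisely the condition forbidding the insertion of $[a,d]$ into the ladder of $L$, which is exactly when such a product is irreducible --- or, to keep the argument self-contained, by computing the relevant Jacquet modules directly via \eqref{eq_mstar} and \eqref{eq_mstar2} and ruling out any composition factor other than $\Lambda$ in the socle.
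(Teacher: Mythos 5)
Your strategy---realize $L\times\drep{a}{d}$ as a quotient of a standard module with Langlands quotient $\Lambda$ of multiplicity one, then show the socle is also $\Lambda$---is at heart the same as the paper's (which packages the socle step through Lemma~\ref{lemma:MVWinv} rather than through explicit contragredients), and your argument for part (ii) is sound. For part (i), however, you correctly diagnose the obstruction---$[-d,-a]$ is adjacent to $[-(d+1),-(d+1)]$, so $\tau\times\drep{-d}{-a}$ cannot be shuffled into a standard module---but you do not resolve it. The appeal to a Lapid--M\'inguez criterion is left unverified, and the condition you attach to it (``$a\leqslant b$ is precisely the condition forbidding the insertion of $[a,d]$ into the ladder of $L$'') is dubious as stated: the ladder of $L$ already has a rung $[b,d]$ ending at $d$, so $[a,d]$ could never be inserted as a new rung for \emph{any} $a\leqslant d$, and the relevance of $a\leqslant b$ is not what you claim. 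The alternative Jacquet-module computation is likewise only gestured at. Neither suggestion, as written, establishes (i).

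The ingredient you are missing is the second description of $L$ supplied by Lemma~\ref{lem_zel1}: $L$ is also the unique irreducible quotient of $\drep{b}{d-1}\times\zrep{d}{e}$. With this presentation the shuffling is unobstructed in case (i): $[b,d-1]\subset[a,d]$ (hence unlinked), and $[a,d]$ meets $[d,e]$ at the common point $d$ (hence the two segments are not adjacent, so Lemma~\ref{lemma_zel0} applies); thus $\drep{a}{d}$ commutes past both factors, giving $\drep{a}{d}\times\drep{b}{d-1}\times\zrep{d}{e}\twoheadrightarrow\xi$. One further step is still required---$\drep{b}{d-1}\times\zrep{d}{e}$ has length two, so $\xi$ could a priori factor through either constituent, and the paper rules out the wrong one by comparing with the shape of the standard module of $\xi$. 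Without this mechanism (or a fully carried-out Jacquet-module argument in its place), your proof of part (i) is incomplete.
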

\begin{proof}
We first prove (ii). Denote by $\xi$ the unique (Langlands) quotient of
\[
\zrep{d+1}{e} \times \drep{b}{d} \times \drep{c}{d-1}.
\]
Since $L \times \drep{c}{d-1}$ is a quotient of the above, and $\xi$ is the unique irreducible quotient, we also have $L \times \drep{c}{d-1} \twoheadrightarrow \xi$. We now have
\begin{align*}
\zrep{d+1}{e} \times \drep{b}{d} \times \drep{c}{d-1} &\cong \zrep{d+1}{e} \times \drep{c}{d-1}  \times \drep{b}{d} \\
&\cong \drep{c}{d-1}  \times \zrep{d+1}{e} \times \drep{b}{d},
\end{align*}
where the first isomorphism follows from the fact that $[b,d]$ contains $[c,d-1]$ (so they are not linked), and the second from Lemma \ref{lemma_zel0}.
Since $\xi$ is the unique quotient of the above representation, it must also be a quotient of $\drep{c}{d-1}  \times L$. Therefore, $\xi$ is the unique quotient of both $\drep{c}{d-1}  \times L$ and $L \times \drep{c}{d-1}$. Since it appears with multiplicity one ($\xi$ being the Langlands quotient), from Lemma \ref{lemma:MVWinv} it follows that $\drep{c}{d-1}  \times L$ and $L \times \drep{c}{d-1}$ are irreducible and isomorphic.

We now prove (i) in a similar manner. Let $\xi$ be the unique (Langlands) quotient of
\[
\zrep{d+1}{e} \times \drep{b}{d} \times \drep{a}{d}.
\]
As in (ii), we also have $L \times \drep{a}{d} \twoheadrightarrow \xi$. By Lemma \ref{lem_zel1}, this implies
\[
\drep{b}{d-1} \times \zrep{d}{e} \times \drep{a}{d} \twoheadrightarrow \xi.
\]
We now have
\begin{align*}
\drep{b}{d-1} \times \zrep{d}{e} \times \drep{a}{d} &\cong \drep{b}{d-1} \times \drep{a}{d} \times \zrep{d}{e}  \\
&\cong  \drep{a}{d} \times \drep{b}{d-1} \times \zrep{d}{e},
\end{align*}
where the first isomorphism follows from Lemma \ref{lemma_zel0}, and the second from the fact that $[a,d]$ contains $[b,d-1]$ (so they are not linked).
Therefore, $\xi$ is an irreducible quotient of $\drep{a}{d} \times \drep{b}{d-1} \times \zrep{d}{e}$, only this time, we do not know if $\xi$ is unique. Nevertheless, Lemma \ref{lem_zel1} now shows that $\xi$ is necessarily a quotient of $\drep{a}{d} \times L$ or of $\drep{a}{d} \times \zrep{d}{e} \times \drep{b}{d-1}$. If the latter were true, we would have
\[
\zrep{d}{e}\times \drep{a}{d} \times \drep{b}{d-1} \cong \drep{a}{d} \times \zrep{d}{e} \times \drep{b}{d-1} \twoheadrightarrow \xi,
\]
contradicting the shape of the standard module for $\xi$. Therefore, $\xi$ is a quotient of $\drep{a}{d} \times L$. As we already know $L \times \drep{a}{d} \twoheadrightarrow \xi$, and $\xi$ appears with multiplicity one (again, using the multiplicity one property of the Langlands quotient), the conclusion follows.
\end{proof}

\subsection{The algorithm}
\label{subs_algorithm}
Recall that any $\pi \in \Irr(G_n)$ is the unique quotient of a (unique) standard representation
\begin{equation}
\label{osnovna}
\delta_r \times \delta_{r-1} \times \dotsm \times \delta_1 \rtimes \tau.
\end{equation}
Here $\tau$ is an irreducible tempered representation, and $\delta_i = \delta([\rho\nu^{a_i},\rho\nu^{b_i}]), i=1,\dotsc,r$ are irreducible essentially square integrable $\GL$-representations with $a_r+b_r \geq \dotsb \geq a_1+b_1 >0$. We will only be working with representations defined by $\rho = \mathbb{1}$ and $a_i, b_i$ from a fixed class modulo $\mathbb{Z}$. We denote this class $\alpha + \mathbb{Z}$ for some representative $\alpha \in \mathbb{R}$. In fact, we will always have either $\alpha=0$ or $\alpha = \frac{1}{2}$, that is, $a_i$ and $b_i$ will be integers or half-integers.

Representations defined by non-linked segments may freely switch places by Lemma \ref{lemma_zel0}. We may therefore group all the $\GL$-representations defined by $\rho = \mathbb{1}$ and numbers $a_i,b_i \in \alpha + \mathbb{Z}$; we call this the $\alpha$-block of the standard module. Furthermore, it is easy to see that linkedness properties allow us to sort the representations inside the $\alpha$-block with respect to the lexicographic order on the segments:
\begin{equation}
\label{eq_order}
[a,b] < [c,d] \iff b < d \text{ or } (b = d \text{ and } a < c).
\end{equation}
In other words, we push the segments ending in larger numbers to the left; if two segments end in the same number, the shorter one goes further left.
Throughout the rest of this section, we assume that the class $\alpha + \mathbb{Z}$ is fixed, that all segments belonging to this class and $\rho = \mathbb{1}$ are grouped, and that they are sorted this way. We use one more convention in the rest of the paper: if $a = b+1$, we set $\drep{a}{b} = \zrep{a}{b} = \mathbb{1}_{\GL_0}$. This enables uniform notation even when some of the segments we consider become empty. We point out that any segment of the form $[a+1,a]$ has a uniquely determined position with respect to the ordering \eqref{eq_order}, and that this position is different from the one taken by $[a'+1,a']$ for $a' \neq a$.

We are now ready to describe our algorithm. We apply it to the $\alpha$-block of the standard module of $\pi \in \Irr(G_n)$. Before we start, let us note that in line 5, $L(\zrep{e+1}{k} \times \drep{b}{e})$ denotes the unique irreducible quotient of $\zrep{e+1}{k} \times \drep{b}{e}$, i.e.~the Langlands quotient of $|\cdot|^{k} \times \dotsc \times |\cdot|^{e+1} \times \drep{b}{e}$.
\begin{algorithmic}[1]
\State Input: $k \in \alpha + \mathbb{Z}$
\State Variables: L, $b$, $e$
\State Let $[b_0,k]$ be the shortest segment ending in $k$. Set $b=b_0$, $e=k$.
\Loop
\State $\text{L} \gets L(\zrep{e+1}{k} \times \drep{b}{e})$ \Comment{initially $\text{L}=\drep{b_0}{k}$ and $\zrep{e+1}{k}$ is empty!}
\State Replace $\zrep{e+1}{k} \times \drep{b}{e}$ with $\text{L}$
\State L switches places with all segments $[a,e]$, where $a \leq b$ 
\State L switches places with all segments $[c,e-1]$, where $c \geq b$
\State Replace L with $\drep{b}{e-1} \times \zrep{e}{k}$
\State $a \gets \max\{a': a' < b \text{ such that the segment }[a',e-1]\text{ occurs and, if } e \neq \frac{3}{2}, a' \neq 2-e\}$
\Comment{if there are no such segments, \textbf{exit}}
\State $e \gets (e-1)$
\State $b \gets a$
\EndLoop
%\algstore{bkbreak}
\end{algorithmic}
The output of this algorithm is a representation of the form
\begin{equation}
\label{eq_output}
\drep{a_r}{b_r} \times \drep{a_{r-1}}{b_{r-1}} \times \dotsm \times \drep{a_j}{b_j} \times \zeta(b_j+1,k) \times \drep{a_{j-1}}{b_{j-1}} \times \dotsm \times \drep{a_1}{b_1} \rtimes \tau.
\end{equation}
Notice that the segments $[a_r,b_r], [a_{r-1},b_{r-1}], \dotsc, [a_j,b_j], [a_{j-1},b_{j-1}], \dotsc [a_1,b_1]$ remain ordered decreasingly with respect to \eqref{eq_order}. In the next subsection, we show that $\pi$ is still the unique irreducible quotient of this representation (obtained by applying the algorithm to the standard module of $\pi$).

\begin{exmp}
\label{ex_alg}
Let us demonstrate the algorithm on a particular example. Let $\pi$ be given by the following standard module:
\[
\drep{3}{6} \times |\cdot|^5 \times \drep{4}{5} \times \drep{3}{4} \times \drep{2}{4} \times  |\cdot|^3 \times \drep{1}{3} \times  |\cdot|^1 \rtimes \tau.
\]
Also set $k=5$; thus  $b_0 = e = 5$ in line 3. We now go through the first iteration the loop:

\noindent 5: $\text{L} = |\cdot|^5$

\noindent 6: nothing happens

\noindent 7: $\drep{3}{6} \times \drep{4}{5} \times |\cdot|^5 \times \drep{3}{4} \times \drep{2}{4} \times  |\cdot|^3 \times \drep{1}{3} \times  |\cdot|^1 \rtimes \tau$

\noindent 8: nothing happens

\noindent 9: nothing happens

\medskip

\noindent Lines 10-12 now set $a = 3$, $e = 4$, $b=3$; we start the second iteration:

\noindent 5: $\text{L} = L(|\cdot|^5 \times \drep{3}{4})$

\noindent 6: $\drep{3}{6} \times \drep{4}{5} \times \text{L} \times \drep{2}{4} \times  |\cdot|^3 \times \drep{1}{3} \times  |\cdot|^1 \rtimes \tau$

\noindent 7: $\drep{3}{6} \times \drep{4}{5} \times \drep{2}{4}  \times \text{L} \times |\cdot|^3\times \drep{1}{3} \times  |\cdot|^1 \rtimes \tau$

\noindent 8: $\drep{3}{6} \times \drep{4}{5} \times \drep{2}{4} \times |\cdot|^3 \times \text{L} \times \drep{1}{3} \times  |\cdot|^1 \rtimes \tau$

\noindent 9: $\drep{3}{6} \times \drep{4}{5} \times \drep{2}{4} \times |\cdot|^3 \times |\cdot|^3 \times \zrep{4}{5} \times \drep{1}{3} \times  |\cdot|^1 \rtimes \tau$

\medskip

\noindent Lines 10-12 set $a = 1$, $e = 3$, $b=1$ and we start the third iteration:

\noindent 5: $\text{L} = L(\zrep{3}{4} \times \drep{1}{3})$

\noindent 6: $\drep{3}{6} \times \drep{4}{5} \times \drep{2}{4} \times |\cdot|^3 \times |\cdot|^3 \times L \times  |\cdot|^1 \rtimes \tau$

\noindent 7: nothing happens

\noindent 8: nothing happens

\noindent 9: $\drep{3}{6} \times \drep{4}{5} \times \drep{2}{4} \times |\cdot|^3 \times |\cdot|^3\times \drep{1}{2} \times \zrep{3}{5} \times  |\cdot|^1 \rtimes \tau$

\medskip

\noindent After this iteration, we encounter the \textbf{exit} command in line 10. Thus, the algorithm is completed, and the resulting representation is
\[
\drep{3}{6} \times \drep{4}{5} \times \drep{2}{4} \times |\cdot|^3 \times |\cdot|^3\times \drep{1}{2} \times \zrep{3}{5} \times  |\cdot|^1 \rtimes \tau.
\]
\end{exmp}
We close this subsection with a few remarks on the algorithm.
\begin{rem}
\label{rem_alg}
\begin{enumerate}[(i)]
\item Note that there are two possible reasons for the algorithm to exit in line 10: either there are no segments ending in $e-1$ remaining, or all such segments are in fact equal to $[2-e,e-1]$.
\item Even after the final iteration, there may be some factors $\delta\nu^s$ remaining between $\zrep{e}{k}$ and $\tau$, just like in the above example, where we have
\[
\dotsm \times \zrep{3}{5} \times |\cdot|^1 \rtimes \tau.
\]
We point out that the algorithm ensures that $\zrep{e}{k}$ may switch places with any of those remaining representations apart from $[2-e,e-1]$: the segments defining those representations end in $e-2$ or lower, so they are not linked to $[e,k]$.
\item Another way to express the result of the above algorithm is by considering the so-called ladders in the $\alpha$-block. Consider a sequence of segments $[c_1,d_1],[c_2,d_2],\dotsc,[c_t,d_t]$ of segments appearing in the $\alpha$-block such that
\[
d_{i-1} = d_{i}-1 \quad \text{and} \quad c_{i-1} \lneqq c_{i}, \quad \forall i = 2,\dotsc, t.
\]
We call such a sequence a ladder (note that the term ``ladder'' usually describes a somewhat larger class of representations, cf.~\cite{Lapid_Minguez_determinantal_Tadic}). Take the longest ladder such that $[c_t,d_t]= [b_0,k]$ (we keep the notation $b_0$ and $k$ from the algorithm: $[b_0,k]$ is the shortest segment ending in $k$) and $c_1 + d_{1} \gneqq 1$ unless $c_1 = d_1 = \frac{1}{2}$. If there are several such ladders of maximal length, choose the one which minimizes the width of the segments, i.e.~which is minimal with respect to the lexicographic ordering of vectors
\[
(d_t-c_t, \dotsc, d_2-c_2,d_1-c_1).
\]
Now the result of the algorithm can be expressed by the following transformation: 
\begin{itemize}
\item  for all $i$, replace $\drep{c_{i}}{d_{i}}$ with $\drep{c_{i}}{d_{i}-1}$ so that the $\alpha$-block remains sorted.
\item Insert $\zrep{k-t+1}{k} = \zrep{d_1}{d_t}$ immediately to the right of all factors of the form $\drep{c_1}{d_1-1}$.
\end{itemize}
\item The algorithm is reversible, and therefore injective. In other words, the standard representation which transforms into \eqref{eq_output} is unique, if it exists. To reconstruct the original standard representation from the representation \eqref{eq_output} obtained by the algorithm, we may also use the ladder description instead of running the algorithm backwards. We choose a  ladder $[c_1',d_1'],[c_2',d_2'],\dotsc,[c_t',d_t']$ with $t = k-b_j$ from the sequence $[a_j,b_j],\dotsc,[a_r,b_r]$ such that $[c_1',d_{1}'] = [a_j,b_j]$. Again, $d_{i-1}'=d_i'-1$; all  the ladders in this paper will have this property. If there is no ladder of length $k-b_j$, take the longest available ladder and use empty segments of the form $[d+1,d],[d+2,d+1]\dotsc$ to achieve length $k-b_j$. Also, if there is more than one ladder of length $k-b_j$, choose the one which \emph{maximizes} segment widths, i.e.~which is maximal with respect to the lexicographic ordering of vectors
\[
(d_1'-c_1', d_2'-c_2',\dotsc, d_t'-c_t').
\]
Then replace $\drep{c_i'}{d_i'}$ with $\drep{c_i'}{d_i'+1}$ (for each $i=1,2,\dotsc,k-b_j$), so that the $\alpha$-block remains sorted, and remove $\zrep{b_j+1}{k}$.

Let us prove that the above procedure is inverse to the one described in (iii). Let $\pi \in \Irr(G_n)$ and let $[c_1,d_1],\dotsc, [c_t,d_t]$ be the ladder described in (iii). To prove that the above procedure reverses the algorithm, we need to show that $[c_i',d_i'] = [c_i,d_i-1]$ for all $i=1,\dotsc,t$. We do this by induction.

By definition, $[c_1',d_1'] = [a_j,b_j]$; on the other hand, line 9 of the algorithm guarantees that $[a_j,b_j]$ in (3) is in fact equal to $[c_1,d_1-1]$. Thus $[c_1',d_1'] = [c_1,d_1-1]$; this settles the base case. Now assume that $[c_i',d_i'] = [c_i,d_i-1]$ for all $i=1,\dotsc,s-1$; we prove that $[c_s',d_s'] = [c_s,d_s-1]$. Indeed, $[c_s',d_s']$ is chosen among $[a_j,b_j],\dotsc,[a_r,b_r]$ as the longest segment in which $d_s' = d_{s-1}'+1 = d_s-1$ and which is linked to $[c_{s-1}',d_{s-1}'] = [c_{s-1},d_{s-1}-1]$. Obviously $[c_{s},d_s-1]$ satisfies these two conditions (the choice of ladder in (iii) implies $[c_{s-1},d_{s-1}]$ and $[c_s,d_s]$ are linked). We need to prove that it is the longest among such segments. If there were a longer segment, say $[c,d]$, with $d = d_s-1 = d_{s-1}$ and such that $[c,d]$ is linked to $[c_{s-1}',d_{s-1}'] = [c_{s-1},d_{s-1}-1]$, the fact that it is longer than $[c_s,d_s-1]$ implies $c < c_s$, and the linkedness implies $c_{s-1} < c$. This also means that $[c,d]$ hasn't been shortened by the algorithm. However, $c_{s-1} < c < c_s$ and $d = d_s-1$ would imply that $[c,d]$ would be chosen instead of $[c_{s-1},d_{s-1}]$ as the $(s-1)$-th rung of the ladder in (iii). Therefore, $[c,d]$ does not exist, i.e.~$[c_s,d_s-1]$ is indeed the longest segment satisfying the required properties. This completes the proof.

\item Not all representations of the form \eqref{eq_output} can be obtained as the output of our algorithm (for some appropriately chosen input). First, if $(a_j,b_j) = (a_{j-1},b_{j-1})$, then \eqref{eq_output} is obviously not an output of our algorithm (this is ensured by line 8). If $(a_j,b_j) \neq (a_{j-1},b_{j-1})$, then we may apply the ladder transformation described in part (iv) of this remark. If $[c_t,k]$ is not the shortest segment ending in $k$ after this procedure, then the above representation obviously cannot be obtained by applying the algorithm to some standard module. In all other cases, \eqref{eq_output} is the output which corresponds to the standard module obtained by this inverse transformation.

\item One can easily see that our algorithm, which deals only with the blocks of representations of general linear groups, is actually the first step of M\oe glin-Waldspurger algorithm for the determination of the Zelevinsky-Aubert dual of an irreducible representation of a general linear group (cf.~\cite{MW_Zelevinski}, II.2 and \cite{Badulescu_Renard_Zelevinsky}, Section 1); there, the role that $\pi$ has in the present  article is played by $L(\delta_r \times \delta_{r-1} \times \dotsm \times \delta_1)$ (cf.~\eqref{osnovna}). We expect that some parts of the results in subsection 3.3 below can  be derived through their results, but, in the end, one has to pass from the general linear groups to classical groups  in the Jacquet module calculation in  Proposition \ref{prop_unique_q}. So, we keep our arguments self-contained and we do not refer to the M\oe glin-Waldspurger algorithm any more.
\end{enumerate}
\end{rem}
\subsection{Explaining the algorithm}
We now explain, line by line, why $\pi$ is still the unique irreducible quotient of the resulting representation. Our proof of this fact proceeds by induction.

Assume that $\pi$ is the unique irreducible quotient of the representation obtained after performing line 6 in some iteration of the loop. This is certainly true in the base case, i.e. in the first iteration, as line 6 does not change anything the first time we go through the loop. Line 7 is then justified by Lemma \ref{lem_zel2} (i), whereas Lemma \ref{lem_zel2} (ii) explains line 8. Lemma \ref{lem_zel1} shows that $L$ is a (the) quotient of $\drep{b}{e-1} \times \zrep{e}{k}$, so $\pi$ is indeed a quotient of the representation obtained after line 9. 

The main technical question is whether the uniqueness is preserved by line $9$. If $\pi$ is still the unique quotient after line 9, we have the following two cases. If the algorithm ends in line 10, then we are done. If not, then after line 10, $\pi$ is the unique quotient of the representation which contains:
\[
\dotsb \times \drep{b}{e-1}\times \zrep{e}{k} \times \drep{a}{e-1} \times \dotsb
\]
Thus, the irreducible subquotient of $\zrep{e}{k} \times \drep{a}{e-1}$ which participates in this epimorphism must be its unique irreducible quotient, i.e.~$L(\zrep{e}{k} \times \drep{a}{e-1})$. This justifies line 6 in the next iteration of the loop (after taking into account lines 11 and 12), thereby completing the induction step. Therefore, it remains to prove the following.

\begin{prop}
\label{prop_unique_q}
The representation obtained after line 9 in any iteration of the loop has a unique irreducible quotient, $\pi$. Moreover, $\pi$ appears with multiplicity one in this representation.
\end{prop}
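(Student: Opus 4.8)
The plan is to isolate, and then control, the single ``error'' that line~9 introduces relative to the representation after line~8. Write $\sigma$ for the representation obtained after line~9 and $\sigma_8$ for the one after line~8. As explained above, $\sigma_8$ is isomorphic (by Lemma~\ref{lem_zel2}) to the representation after line~6, which by the inductive hypothesis has $\pi$ as its unique irreducible quotient; carrying the multiplicity-one statement along the induction --- it holds in the base case, where the representation after line~6 is the standard module of $\pi$ and the Langlands quotient occurs there with multiplicity one --- we may assume in addition that $[\sigma_8:\pi]=1$. Since $\sigma\twoheadrightarrow\sigma_8\twoheadrightarrow\pi$, the representation $\pi$ is certainly a quotient of $\sigma$, so the whole issue is to exclude everything else.

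The first step is to record this error precisely. Writing $\sigma=\Sigma_L\times(\drep{b}{e-1}\times\zrep{e}{k})\times\Sigma_R\rtimes\tau$, Lemma~\ref{lem_zel1} applied to the adjacent segments $[b,e-1]$ and $[e,k]$ shows that $\drep{b}{e-1}\times\zrep{e}{k}$ has length two, with $L$ as its unique irreducible quotient and $Q:=L(|\cdot|^k,\dots,|\cdot|^e,\drep{b}{e-1})$ as its unique irreducible subrepresentation. Exactness of parabolic induction then yields a short exact sequence
\[
0\longrightarrow K\longrightarrow\sigma\longrightarrow\sigma_8\longrightarrow 0,\qquad K=\Sigma_L\times Q\times\Sigma_R\rtimes\tau,
\]
so that $[\sigma]=[\sigma_8]+[K]$ in the Grothendieck group, and every irreducible quotient of $\sigma$ is either a quotient of $\sigma_8$ --- hence isomorphic to $\pi$ --- or a quotient of $K$. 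It therefore suffices to establish two things: (a)~$[K:\pi]=0$, which together with $[\sigma_8:\pi]=1$ forces $\pi$ to occur with multiplicity one in $\sigma$ and to not be a quotient of $K$; and (b)~no irreducible quotient of $K$ is a quotient of $\sigma$. Granting (a) and (b), the two assertions of the proposition follow at once.

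Both (a) and (b) I would attack through Jacquet modules. By the second form of Frobenius reciprocity, an irreducible $\pi'$ is a quotient of $\sigma=\Ind_P^G(\Sigma_L\otimes\drep{b}{e-1}\otimes\zrep{e}{k}\otimes\Sigma_R\otimes\tau)$ exactly when the inducing datum embeds into $R_{\overline{P}}(\pi')$, and being a quotient of $K$ is detected the same way with $Q$ in place of $\drep{b}{e-1}\otimes\zrep{e}{k}$ and the correspondingly coarser parabolic. I would compute these Jacquet modules with Tadi\'{c}'s structure formula \eqref{eq_tadic_classical}, \eqref{eq_tadic_classical2} and the formulas \eqref{eq_mstar}, \eqref{eq_mstar2} for the $\GL$-factors, keeping careful track of the string of characters $|\cdot|^e,\dots,|\cdot|^k$ constituting $\zrep{e}{k}$. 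The three features of the set-up that should make the bookkeeping close up are: (i)~the temperedness of $\tau$, which forces every exponent occurring in $\mu^*(\tau)$ to be non-positive, so that a string of positive exponents such as the one coming from $\zrep{e}{k}$ can be housed in essentially one way; (ii)~the sorting of the $\alpha$-block, by which $[e,k]$ is unlinked to every segment lying to the right of $\zrep{e}{k}$ with the possible exception of a single $[2-e,e-1]$ (cf.\ Remark~\ref{rem_alg}(ii)); and (iii)~the exclusion of exactly that segment $[2-e,e-1]$ built into line~10, which is what prevents a $\drep{b}{e-1}$-contribution from being reabsorbed into the classical-group factor $\tau$ of a competing irreducible. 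Feeding these in, one should find that the inducing datum of $\sigma$ embeds into $R_{\overline{P}}(\pi')$ only for $\pi'\cong\pi$, giving (b), while the datum of $K$ --- by a comparison of cuspidal supports together with the fact that the Langlands data of $Q$ is obtained from that of $L$ by splitting $[b,e]$ into $[b,e-1]$ and $[e,e]$ --- cannot be embedded into $R_{\overline{P}}(\pi)$ at all, giving (a).

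The hard part is this last step, and within it the case where $\tau$ is tempered but not square-integrable: then $\mu^*(\tau)$ involves segments beyond the square-integrable ones, the filtration of $R_{\overline{P}}(\pi)$ coming from Tadi\'{c}'s formula is no longer transparently multiplicity-free, and one has to argue carefully that none of the extra terms can house the datum of $\sigma$ a second time, nor the datum of $K$ at all. This is precisely the technical knot the paper isolates elsewhere (the worked example assumes $\tau$ square-integrable in order to sidestep it), and untying it --- with the $[2-e,e-1]$-bookkeeping of line~10 doing the essential work --- is where the weight of the argument lies.
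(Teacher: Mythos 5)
Your exact sequence $0\to K\to\sigma\to\sigma_8\to 0$ with $K=\Sigma_L\times Q\times\Sigma_R\rtimes\tau$ is correct, and the logical reduction to (a) and (b) is valid. But this reduction is close to a restatement of the proposition rather than progress toward it, and the sketch of how to prove (a) and (b) has a genuine gap. For (b), you write that ``one should find that the inducing datum of $\sigma$ embeds into $R_{\overline P}(\pi')$ only for $\pi'\cong\pi$'' --- by second Frobenius reciprocity, that sentence \emph{is} the statement that $\pi$ is the unique irreducible quotient of $\sigma$, so as an argument it is circular; you still need a method to rule out all $\pi'\ne\pi$ without already knowing the Jacquet modules of every candidate $\pi'$. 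For (a), comparing cuspidal supports cannot work: all subquotients of $\drep{b}{e-1}\times\zrep{e}{k}$ have the same cuspidal support, so $K$ and $\sigma_8$ do too, and the hint that the Langlands data of $Q$ arises from that of $L$ by splitting $[b,e]$ is not an argument that $\pi$ (whose Langlands data is that of the \emph{original} standard module, not of $L$) is absent from $K=\Sigma_L\times Q\times\Sigma_R\rtimes\tau$. Moreover, $Q$ is a Langlands quotient, not a discrete series, so its Jacquet modules are much less tractable than those of the $\drep{a_i}{b_i}$'s; the computation you propose is considerably harder than the one the paper carries out.

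The paper's route is different in a way that sidesteps both problems. Instead of decomposing $\sigma$ and analysing $K$, it chooses a single well-designed ``witness'' subquotient $\xi=\Delta^\vee\otimes\pi'$ in $R_P(\Pi)$, where $\Delta$ consists of the leading copies of the largest segment and $\pi'$ is the unique irreducible quotient of the remainder $\Pi'$ (governed by an induction on $r-j$, the number of $\GL$-factors to the left of $\zrep{b_j+1}{k}$, rather than on the iteration count). It then shows (using MVW Lemma III.3) that every irreducible quotient of $\Pi$ has $\xi$ in its Jacquet module, and (using Tadi\'{c}'s formula, Casselman's criterion, and a careful count of where the exponents $-b$ or $1-k$ can come from) that $\xi$ occurs in $R_P(\Pi)$ with multiplicity one. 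This yields both uniqueness and multiplicity one simultaneously, without having to determine the composition factors of anything. Crucially, $\Delta$ is built from discrete series $\drep{a}{b}$ rather than from a Langlands quotient $Q$, which is what makes the Jacquet-module bookkeeping tractable. The three split cases in the paper ($k\ne b_r$; $k=b_r$ with $b_0\lneqq k$; $k=b_r$ with $b_0=k$) exist precisely to handle the interactions you gesture at (temperedness, the sort order, the $[2-e,e-1]$ exclusion), but they do so at the level of the witness computation, not at the level of the full exact sequence. So the gap here is concrete: you have not produced, or outlined a working method to produce, the Jacquet-module input that would establish (a) and (b), and the route via $K$ seems to make the essential computation harder rather than easier.
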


\begin{proof}
For simplicity of notation, assume that the $\GL$-part of the standard module consists only of the $\alpha$-block (the proof remains the same if there are other factors in the $\GL$-block). Let $\Pi$ denote the representation obtained after line 9 in some iteration of the algorithm. Then $\Pi$ is of the following form:
\begin{equation}
\label{eq_line9}
\drep{a_r}{b_r} \times \drep{a_{r-1}}{b_{r-1}} \times \dotsm \times \drep{a_j}{b_j} \times \zeta(b_j+1,k) \times \drep{a_{j-1}}{b_{j-1}} \times \dotsm \times \drep{a_1}{b_1} \rtimes \tau.
\end{equation}
Here $a_i,b_i \in \alpha + \mathbb{Z}, \forall i$. Furthermore, because of the way we organized the $\alpha$-block, we know that $[a_{i-1},b_{i-1}] \leq [a_{i},b_{i}]$ with respect to the lexicographic order \eqref{eq_order}. Note that $k-b_j$ signifies the number of iterations our algorithm has gone through up to this point. 

The proof proceeds by induction. Rather than inducing on the number of iterations, we induce on $r-j$, i.e.~the number of factors appearing to the left of $\zrep{b_j+1}{k}$. Let us explain the approach. Assume that $\xi$ is an irreducible subquotient of $R_P(\Pi)$ for some suitable standard parabolic $P$, satisfying the following conditions:
\begin{itemize}
\item if $\pi_1$ is any irreducible quotient of $\Pi$, then $R_P(\pi_1)$ contains $\xi$
\item $\xi$ appears with multiplicity one in $R_P(\Pi)$.
\end{itemize}
If such $\xi$ exists, our claim obviously follows. We prove the existence of such $\xi$ by induction. The base case is covered by the fact that any standard representation has a unique irreducible quotient, which comes with multiplicity one. To perform the inductive step, we consider the following cases:
\begin{enumerate}[1.]
\item $k \neq b_r$
\item $k = b_r$
\begin{itemize}
\item[2.1] in the initial iteration, $b_0 \lneqq k$ 
\item[2.2] 
in the initial iteration, $b_0 = k$.
\end{itemize}
\end{enumerate}
Let us find $\xi$ in Case 1. For simplicity, let $a = a_r$ and $b=b_r$. Note that $[a_{r-1},b_{r-1}]$ may also be equal to $[a,b]$. Therefore, let $m$ be the total number of segments equal to $[a,b]$ appearing in $\Pi$, so that $[a_r,b_r] = [a_{r-1},b_{r-1}] = \dotsb = [a_{r-m+1}, b_{r-m+1}] = [a,b]$, and $[a_{r-m},b_{r-m}] \lneqq [a,b]$. We let $P$ be the standard maximal parabolic with $m\cdot (b-a+1)$ the size of the $\GL$-block in its Levi factor. If we denote with $\Pi'$ the part of $\Pi$ which remains after removing the leading $\drep{a}{b}$'s and let $\Delta = \drep{a}{b} \times \dotsm \times \drep{a}{b}$ ($m$ times), we have
\[
\Pi = \Delta  \rtimes \Pi'.
\]
It is easy to see that $\Pi'$ can be obtained by applying the appropriate number of iterations of the algorithm to a certain standard representation. Therefore, by induction hypothesis, $\Pi'$ possesses a unique irreducible quotient, $\pi'$. It is important to note that the hypothesis also guarantees that $\pi'$ appears in $\Pi'$ with multiplicity one. Now let
\[
\xi =\Delta^\vee \otimes \pi';
\]
note that $\Delta^\vee = \drep{-b}{-a} \times \dotsm \times \drep{-b}{-a}$ ($m$ times). We show that $\xi$ satisfies the required properties. Letting $\pi_1$ be any irreducible quotient of $\Pi$, we may write $\pi_1 \hookrightarrow\Delta^\vee \rtimes  \Pi'^t$,
with $
\Pi'^t = \drep{-b_{r-m}}{-a_{r-m}} \times \dotsm \times \drep{-b_j}{-a_j} \times \zeta(-k,-b_j-1) \times \drep{-b_{j-1}}{-a_{j-1}} \times \dotsm \times \drep{-b_1}{-a_1} \rtimes \tau$. Frobenius reciprocity now implies that there exists a non-trivial intertwining
\[
R_P(\pi_1) \to \Delta^\vee \otimes \Pi'^t.
\]
Note that the fact that $\pi'$ is the unique irreducible quotient of $\Pi'$ translates to the fact that $\pi'$ is the unique irreducible subrepresentation of $\Pi'^t$. Therefore, from Lemma III.3 of \cite{MVW_Howe} it follows that the image of the above map necessarily contains $\Delta^\vee \otimes \pi'$,
i.e.~$\xi$. This proves the first required property of $\xi$.

It remains to show that $\xi$ appears in $R_P(\Pi)$ with multiplicity one. This requires only a simple application of Tadić's formula \eqref{eq_tadic_classical}. Indeed, any $\xi$ which is a subquotient of $R_P(\Pi)$ must occur in
\begin{equation}
\begin{gathered}
\label{eq_semis}
M^*(\drep{a_r}{b_r}) \times M^*(\drep{a_{r-1}}{b_{r-1}}) \times \dotsm \times M^*(\drep{a_j}{b_j})\\
 \times M^*(\zeta(b_j+1,k)) \times M^*(\drep{a_{j-1}}{b_{j-1}}) \times \dotsm \times M^*(\drep{a_1}{b_1}) \rtimes \mu^*(\tau).
\end{gathered}
\end{equation}
For a moment, consider only the $\GL$-part of $\xi$, i.e.~$\Delta^\vee$. We claim that the only way $\Delta^\vee$ can appear in the above formula is if the first $m$ segments participate with $\drep{-b}{-a}\otimes \mathbb{1}$. To prove this, we look at the possible sources of $(-b)$'s in $\Delta^\vee$.

Notice that $\mu^*(\tau)$ cannot add a $(-b)$ to $\Delta^\vee$. Indeed,  the non-degeneracy of $\Delta^\vee$ would imply that $\mu^*(\tau)$ contains a subquotient of the form $\drep{-b}{-a'} \otimes \tau'$ for some $a' \geq a$. However, this contradicts the temperedness of $\tau$ by Casselman's criterion, and is therefore impossible. Taking \eqref{eq_tadic_classical2} into account, we see that the only possible sources of $(-b)$'s are the factors $\drep{a_i}{b_i}$ with ending in $b_i = b$. However, if $\drep{a'}{b}$ is a segment ending in $b$, then the only way $M^*(\drep{a'}{b})$ can add a $(-b)$ to the $\GL$-part is if participates with $\drep{-b}{-a'} \otimes \mathbb{1}$---this follows from formula \eqref{eq_tadic_classical2}. In other words, if $\drep{a'}{b}$ contributes with $(-b)$, then it also contributes with $(-a')$. This shows that no segment $[a',b]$ with $a' \lneqq a$ may be a source of $(-b)$ in $\Delta^\vee$. We deduce that all of the $m$ $(-b)$'s in $\Delta^\vee$ must come from the first $m$ segments. But this also means that the first $m$ segments must participate with $\drep{-b}{-a} \otimes \mathbb{1}$, and the proof of our claim is complete.

This shows that any occurrence of $\xi = \Delta^\vee \otimes \pi'$ must come from $\Delta^\vee \otimes \Pi'$. However, the inductive hypothesis implies $\Pi'$ only contains $\pi'$ with multiplicity one. Therefore, $\xi$ also appears with multiplicity one. This proves the second property of $\xi$ and completes the inductive step in Case 1.

We now turn to Case 2. The only reason the above proof does not work in this case is that now $k=b$, so $\zrep{b_j+1}{b}$ can also be a source of $(-b)$ in $\Delta^\vee$. We therefore modify our approach to avoid this problem.

Let us consider Case 2.1. In this case, we have $b_r = k$, that is, the leftmost segment ends in $k$. We know that the first segment changed by the algorithm was $[b_0,k]$; after the first iteration, we are left with $[b_0,k-1]$. The assumption in Case 2.1 is that this segment is non-empty. Therefore, at least one of the segments $[a_j,b_j],\dotsc, [a_{r-1},b_{r-1}]$ is equal to $[b_0,k-1]$. Now let $[c,k-1]$ denote the shortest segment ending in $k-1$ which appears among $[a_i,b_i]$'s. There may be segments ending in $k$, but none of them are shorter than $[c,k]$---this would contradict the initial choice of $[b_0,k]$ since $c \geq b_0$ (in fact, $[c,k]$ can only appear if $c = b_0$). Therefore, $\Pi$ has the following form:
\begin{align*}
{\drep{c}{k}\times \dotsb \times \drep{c}{k}} &\times (\text{longer segments} \text{ ending in }k) \\
&\times {\drep{c}{k-1}\times \dotsb \times \drep{c}{k-1}} \times \dotsm \rtimes \tau.
\end{align*}
Here $\drep{c}{k}$ appears $m_1$ times (we allow $m_1$ to be zero), and $\drep{c}{k-1}$ appears $m_2>0$ times. We set $m=m_1+m_2$. Using the fact that $\drep{c}{k-1}$ is not linked with $\drep{c}{k}$ or any longer segment ending in $k$, and that $\drep{c}{k-1}\times|\cdot|^k \twoheadrightarrow \drep{c}{k}$, we may write $\Pi$ as a quotient of the following representation:
\[
\drep{c}{k-1}\times \dotsm \times \drep{c}{k-1} \times |\cdot|^k \times \dotsm \times |\cdot|^k \times (\text{longer segments ending in }k) \times \dotsm \rtimes \tau.
\]
Here $\drep{c}{k-1}$ appears $m$ times, and $|\cdot|^k$ appears $m_1$ times. In short, $\Pi$ is a quotient of $\Delta \times (|\cdot|^k,m_1) \times \Pi'$, where 
\begin{align*}
\Delta &= \drep{c}{k-1}\times \dotsm \times \drep{c}{k-1} \quad (m \text{ times})\\
(|\cdot|^k,m_1) &= |\cdot|^k \times \dotsm \times |\cdot|^k \quad (m_1 \text{ times})
\end{align*}
and $\Pi' = \widehat{\drep{a_r}{b_r}} \times \dotsm \widehat{\drep{a_j}{b_j}} \times \zrep{b_j+1}{k} \times {\drep{a_{j-1}}{b_{j-1}}} \times \dotsm \times \drep{a_1}{b_1} \rtimes \tau$. Here $\widehat{\drep{a_i}{b_i}}$ denotes that we are omitting $\drep{a_i}{b_i}$ if $[a_i,b_i] = [c,k-1]$ or $[c,k]$. In particular,
\[
\begin{gathered}\label{eq_segmenti1}\tag{\textasteriskcentered}
\text{any segment appearing in } \Pi' \text{ which contains } k-1\\
\text{necessarily begins with a number strictly smaller than } c.
\end{gathered}
\]
We may now proceed with the proof just like in Case 1. By Remark \ref{rem_alg} (v), $(|\cdot|^k,m_1)\rtimes \Pi'$ is the output of several iterations of the algorithm applied to some standard module---if $c > b_0$, this standard module is obtained from the standard module of $\pi$ by removing any factors of the form $\drep{c}{k-1}$; if $c=b_0$, we remove all the factors of the form or $\drep{b_0}{k}$ or $\drep{b_0}{k-1}$ and replace them with exactly $m_1+1$ factors $|\cdot|^k$---we leave the verification of this fact to the reader. Moreover, $(|\cdot|^k,m_1)\rtimes \Pi'$ is induced from a strictly smaller number of factors than $\Pi$: we have removed exactly $m_2$ of them. Therefore, the induction hypothesis implies that $(|\cdot|^k,m_1)\rtimes \Pi'$ possesses a unique irreducible quotient---which we call $\pi'$---and that this quotient appears with multiplicity one. Set $\xi = \Delta^\vee \otimes \pi'$. Just like in Case 1, Frobenius reciprocity now easily implies that if $\pi_1$ is any irreducible quotient of $\Delta\times (|\cdot|^k,m_1)\rtimes \Pi'$, then $R_P(\pi_1)$ contains $\xi$.

It remains to show that $\xi$ appears in $R_P(\Pi)$ with multiplicity one. To this end, we employ the strategy from Case 1. First, we rewrite equation \eqref{eq_semis} starting with $\Delta \times (|\cdot|^k,m_1) \times \Pi' \twoheadrightarrow \Pi$: any subquotient $\xi$ of $R_P(\Pi)$ has to appear in
\[
\begin{gathered}
M^*(\drep{c}{k-1}) \times \dotsm \times M^*(\drep{c}{k-1})
\times M^*(|\cdot|^{k}) \times \dotsm \times M^*(|\cdot|^{k})\\
\times M^*(\widehat{\drep{a_r}{b_r}}) \times \dotsm M^*(\widehat{\drep{a_j}{b_j}}) \times M^*(\zrep{b_j+1}{k}) \\
\times M^*({\drep{a_{j-1}}{b_{j-1}}}) \times \dotsm \times M^*(\drep{a_1}{b_1}) \rtimes \mu^*(\tau).
\end{gathered}
\]
We now show that the only way $\Delta^\vee$ can appear as the $\GL$-factor in $R_P(\Pi)$ is if all the $\drep{c}{k-1}$'s participate with $\drep{1-k}{-c} \otimes \mathbb{1}$.

We do this by checking the possible sources of $1-k$. First, $\mu^*(\tau)$ cannot be add $1-k$ to $\Delta^\vee$ because of Casselman's criterion, just like in Case 1. Next, because of \eqref{eq_segmenti1}, any segment from $\Pi'$ which could add a $(1-k)$ to $\Delta^\vee$ would also contribute with a number $c' > -c$. Since no such $c'$ appears in $\Delta^\vee$, we deduce that representations $\drep{a_i}{b_i}$ from $\Pi'$ do not contribute to $\Delta^\vee$. The only possible source remaining is $\zrep{b_j+1}{k}$. However, formula \eqref{eq_tadic_classical2} shows that if $M^*(\zrep{b_j+1}{k})$ adds $1-k$ to $\Delta^\vee$, it must also add $(-k)$, and this is impossible since $\Delta^\vee$ does not contain $(-k)$.

This shows that any occurrence of $\xi = \Delta^\vee \otimes \pi'$ must come from $\Delta^\vee \otimes (|\cdot|^k,m_1) \times \Pi'$. Furthermore, just like in Case 1, the inductive hypothesis implies $(|\cdot|^k,m_1) \times \Pi'$ contains $\pi'$ with multiplicity one, so $\xi$ also appears with multiplicity one. We have thus completed the inductive step in Case 2.1.

Finally, we turn to Case 2.2. Recall that we now have $b_0 = k$. Our choice of $\xi$ is slightly more complicated in this case. Let $\pi_1$ be an irreducible quotient of $\Pi$. Also, let $m\geq 1$ be the total number of times the segment $[k,k]$ appears in the original standard module. Recall that $\Pi$ equals
\[
\drep{a_r}{b_r} \times \drep{a_{r-1}}{b_{r-1}} \times \dotsm \times \drep{a_j}{b_j} \times \zeta(b_j+1,k) \times \drep{a_{j-1}}{b_{j-1}} \times \dotsm \times \drep{a_1}{b_1} \rtimes \tau.
\]
By our assumption, $a_r = b_r = \dotsb = a_{r-m+2} = b_{r-m+2} = k$. We now concentrate on $\drep{a_j}{b_j} \times \zeta(b_j+1,k)$. By Lemma \ref{lem_zel1}, this representation has two irreducible subquotients: a unique irreducible subrepresentation which we denote by $s$, and a unique irreducible quotient, denoted $q$. We still do not now which one of them participates in the epimorphism $\Pi \twoheadrightarrow \pi_1$, so we have two options:
\begin{gather}
Q = \drep{a_r}{b_r} \times \dotsm \times q \times \drep{a_{j-1}}{b_{j-1}} \times \dotsm \times \drep{a_1}{b_1} \rtimes \tau \twoheadrightarrow \pi_1\tag{Q}\\
S= \drep{a_r}{b_r} \times \dotsm \times s \times \drep{a_{j-1}}{b_{j-1}} \times \dotsm \times \drep{a_1}{b_1} \rtimes \tau \twoheadrightarrow \pi_1 \tag{S}
\end{gather}
Recall that $q$ can also be written as the quotient of $\zrep{b_j+2}{k} \times \drep{a_j}{b_{j}+1}$. By Lemma \ref{lem_zel2} (ii) and (i), $q$ can switch places with $\drep{a_j}{b_j}$ as well as any other segments (appearing to the left) of the form
\begin{itemize}
\item $[a',b_j]$, with $a' \geq a_j$
\item $[a',b_j+1]$, with $a' \leq a_j$.
\end{itemize}
After pushing $q$ to the left by performing these switches, our inductive hypothesis shows that $Q$ has a unique irreducible quotient; in fact, going from $\Pi \twoheadrightarrow \pi_1$ to $Q \twoheadrightarrow \pi_1$ amounts to taking one step back in our algorithm. Observe that in this case $\pi_1$ is the quotient of the original standard module, in which the segment $[k,k]$ appears $m$ times.

On the other hand, recall that $s$ is the (unique) irreducible quotient of $\zeta(b_j+1,k) \times \drep{a_j}{b_j}$. By Lemma \ref{lem_zel2} (ii), $s$ may switch places with any of the segments (appearing to the left) ending in $b_j$, as they are all shorter than $[a_j,b_j]$. After this, the induction hypothesis again shows that  $S$ possesses a unique irreducible quotient. Thus, if $S \twoheadrightarrow \pi_1$, then $\pi_1$ is the unique irreducible quotient of $S$. Note that this time, running the algorithm backwards will not lead to the original standard module. However, it is easy to see that the standard module of $\pi_1$ in this case will also contain the segment $[k,k]$ exactly $m$ times.

In fact, the reason for the above discussion is that we have now explained the following:
\[
\begin{gathered}\label{eq_mtimes}\tag{$\dagger$}
\text{if }\Pi \twoheadrightarrow \pi_1,\text{ then the standard module of }\pi_1\\\text{contains the segment }[k,k]\text{ precisely }m\text{ times}.
\end{gathered}
\]
Continuing our discussion, we let $\pi_1$ be a quotient of $\Pi$. Instead of working with $q$ and $s$, we now simply observe that (whenever $b_j+1 \lneqq k$; in particular, in Case 2.2)
\[
|\cdot|^k \times \drep{a_j}{b_j} \times \zeta(b_j+1,k-1) \twoheadrightarrow \drep{a_j}{b_j} \times \zeta(b_j+1,k).
\]
Thus, from $\Pi \twoheadrightarrow \pi_1$ we get
\[
\drep{a_r}{b_r} \times \dotsm \times |\cdot|^k \times \drep{a_j}{b_j} \times \zeta(b_j+1,k-1) \times \dotsm \times \drep{a_1}{b_1} \rtimes \tau \twoheadrightarrow \pi_1.
\]
We now push the representation $|\cdot|^k$, i.e.~the segment $[k,k]$, to the left. It may freely switch places with any segments (to the left) not ending in $k-1$. On the other hand, when switching places with a factor of the form $\drep{a'}{k-1}$, we have two options: either $L(|\cdot|^k \rtimes \drep{a'}{k-1})$ or $\drep{a'}{k}$ participates in the epimorphism onto $\pi_1$. However, \eqref{eq_mtimes} shows that $\drep{a'}{k}$ cannot participate, as this would easily lead to $\pi_1$ having a standard module with only $m-1$ segments $[k,k]$. Taking into account that $L(|\cdot|^k \rtimes \drep{a'}{k-1})$ is a quotient of $|\cdot|^k \rtimes \drep{a'}{k-1}$, this means that $|\cdot|^k$ may actually switch places with $\drep{a'}{k-1}$. This discussion shows that we may write
\[
(|\cdot|^k,m) \rtimes \Pi' \twoheadrightarrow \pi_1,
\]
where $\Pi'$ is obtained from $\Pi$ by removing all segments of the form $[k,k]$  which appear to the left of $\zeta(b_j+1,k)$, and truncating $\zrep{b_j+1}{k}$ to $\zrep{b_j+1}{k-1}$. Again, we point out that $\Pi'$ is a representation which can be obtained by applying our algorithm---this time with input $(k-1)$---to some standard module. (This standard module is obtained from the original standard module by removing all factors of the form $|\cdot|^k$). Therefore, the induction hypothesis shows that $\Pi'$ possesses a unique irreducible quotient, which we denote by $\pi'$. The rest of the proof now proceeds as before. We set
\[
\xi = (|\cdot|^{-k},m) \otimes \pi'.
\]
A Jacquet module computation similar to those from previous cases shows that $\xi$ appears in $R_P(\Pi)$ with multiplicity one, and that it appears in the Jacquet module of any quotient of $\Pi$. This completes the inductive step in the last of our cases, and thus concludes the proof of Proposition \ref{prop_unique_q}.
\end{proof}

\begin{exmp}
\label{ex_ind}
We illustrate our choice of $\Pi'$ in the proof of Proposition \ref{prop_unique_q} by giving another concrete example. Consider the representation
\[
\drep{4}{5} \times \drep{2}{4} \times |\cdot|^3 \times \zrep{4}{5} \rtimes \tau.
\]
Using the same notation as in the proof of Proposition \ref{prop_unique_q}, in the above representation we have $k=5$ and $b_r = 5$. Furthermore, as one may easily check, this representation is obtained by applying the algorithm to the standard representation $|\cdot|^5 \times \drep{4}{5}\times \drep{3}{4} \times \drep{2}{4} \rtimes \tau$. Therefore, Case 2.2 applies, so we get $\Pi'$ by shortening $\zrep{4}{5}$ to $|\cdot|^4$:
\[
\drep{4}{5} \times \drep{2}{4} \times |\cdot|^3 \times |\cdot|^4 \rtimes \tau
\]
Next, we show that the above representation also has a unique irreducible quotient. We have $k=4$ and $b_r = 5$, so we are in Case 1. Thus
\[
\Delta = \drep{4}{5} \quad \text{and} \quad \Pi' = \drep{2}{4} \times |\cdot|^3 \times |\cdot|^4\rtimes \tau.
\]
We have thus reduced our representation to $\drep{2}{4} \times |\cdot|^3 \times |\cdot|^4\rtimes \tau$. Here $k=b_r=4$ and the initial segment is easily seen to be $[3,4]$, so Case 2.1 applies. We set
\[
\Delta = |\cdot|^3 \quad \text{and} \quad \Pi' = \drep{2}{4} \times |\cdot|^4\rtimes \tau.
\]
Finally, the representation $\drep{2}{4} \times |\cdot|^4 \rtimes \tau$ is also (trivially) treated by Case 2.2. We thus arrive at the standard representation $\drep{2}{4} \rtimes \tau$, which is the base case.
\end{exmp}

Before applying the algorithm to analyze theta correspondence, we prove another technical lemma (and provide an example) which we use in Section \ref{sec_lifts}. The reader is advised to skim through this result until we invoke it later in the proof of Theorem \ref{theorem_appearance}. For the purposes of the following lemma, we introduce the following notation: For any $\pi \in \Irr(G_n)$ and $k\in \alpha + \mathbb{Z}$, we let $\text{len}_k(\pi)$ denote the number of iterations which the algorithm with input $k$---applied to the $\alpha$-block of $\pi$---performs before exiting the loop. In other words, $\text{len}_k(\pi)$ is the length of the ladder described in Remark 3.5 (iii).
\begin{lem}
\label{lem_len_k}
Consider the representation \eqref{eq_line9} (in the proof of Proposition \ref{prop_unique_q}) obtained by applying several (not necessarily all) iterations of the algorithm to $\pi \in \Irr(G_n)$:
\[
\drep{a_r}{b_r} \times \drep{a_{r-1}}{b_{r-1}} \times \dotsm \times \drep{a_j}{b_j} \times \zeta(b_j+1,k) \times \drep{a_{j-1}}{b_{j-1}} \times \dotsm \times \drep{a_1}{b_1} \rtimes \tau.
\]
Denote this representation by $\textnormal{alg}(\pi)$. We now apply a certain transformation to $\textnormal{alg}(\pi)$: choose $k\geq b' \geq b_j$ and let $p \in \{j,\dotsc,r\}$ be the unique index such that $b_p = b'$ and such that $[a_p,b_p] = [a_p,b']$ is one of the segments which were altered by the algorithm. For any $j' \leq p$, consider the representation $T(k-b',p-j')(\textnormal{alg}(\pi))$ given by
\begin{equation}
\label{eq_len_k}
\drep{a_r}{b_r} \times \drep{a_{r-1}}{b_{r-1}} \times \dotsm \times \drep{a_{j'}}{b_{j'}} \times \zeta(b'+1,k) \times \drep{a_{j'-1}}{b_{j'-1}} \times \dotsm \times \drep{a_1}{b_1} \rtimes \tau.
\end{equation}
In other words, we transform $\textnormal{alg}(\pi)$ by omitting some initial part of the segment which defines $\zrep{b_j+1}{k}$ and inserting the remaining part $\zrep{b'+1}{k}$ anywhere to the right of $[a_p,b_p]$. Then any irreducible quotient $\pi'$ of \eqref{eq_len_k} satisfies $\textnormal{len}_k(\pi') \leq \textnormal{len}_k(\pi)$.
\end{lem}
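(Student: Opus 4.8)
The plan is to reduce the inequality --- via an analysis of the irreducible quotients of \eqref{eq_len_k} --- to a combinatorial comparison of the ladder lengths of Remark~\ref{rem_alg}(iii), and then to carry out that comparison by lifting a ladder for $\pi'$ to one for $\pi$.

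\medskip
\noindent\emph{Step 1: controlling the quotients.} Writing $\zeta(b'+1,k)$ as the Langlands quotient of the descending chain $|\cdot|^k\times\cdots\times|\cdot|^{b'+1}$, the representation \eqref{eq_len_k} is a quotient of the representation $\Pi^\flat$ obtained from it by substituting this chain. Every $\GL$-factor of $\Pi^\flat$ is essentially square-integrable with strictly positive central exponent --- for the $\drep{a_i}{b_i}$ this is inherited from the standard module of $\pi$ and preserved by the algorithm (Remark~\ref{rem_alg}(iii) together with the condition $c_1+d_1>1$ or $c_1=d_1=\tfrac12$ on the bottom rung, after discarding any rung shortened to an empty segment), and $b'+1>0$. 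Hence any irreducible quotient $\pi'$ of \eqref{eq_len_k} is the Langlands quotient $L(\Sigma^\ast)$ of a genuine standard module $\Sigma^\ast$ obtained by sliding the chain into sorted position inside $\Pi^\flat$; the only nontrivial moves are the merges $\drep{a}{c-1}\times|\cdot|^{c}\twoheadrightarrow\drep{a}{c}$ forced whenever a chain element $|\cdot|^{c}$ must cross a $\GL$-factor $\drep{a}{c-1}$ to which it is linked (Lemmas~\ref{lemma_zel0}--\ref{lem_zel2}). One checks --- and this is where the ``forced merge'' bookkeeping matters, since an unchecked version of it would admit a $\Sigma^\ast$ violating the bound --- that the $\alpha$-block of every such $\Sigma^\ast$ is obtained from the $\alpha$-block $\mathcal S$ of $\pi$ by shortening each ladder rung of Remark~\ref{rem_alg}(iii) already processed by the algorithm (from $[c_i,d_i]$ to $[c_i,d_i-1]$), adjoining some --- but, because of forced merges, not necessarily all --- of the singletons $[c,c]$ with $c\in\{b'+1,\dots,k\}$, and performing further merges, in such a way that no adjoined singleton is ``separated'' from the shortened rung with the same right endpoint by any earlier rung.

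\medskip
\noindent\emph{Step 2: the combinatorial core.} For a segment multiset $\mathcal T$ let $\text{len}_k(\mathcal T)$ be the length of the longest ladder of Remark~\ref{rem_alg}(iii) in $\mathcal T$. It suffices to show $\text{len}_k(\mathcal S^\ast)\le\text{len}_k(\mathcal S)$ for each $\mathcal S^\ast$ above. Fix a maximal such ladder $[c_1',d_1'],\dots,[c_s',d_s']$ in $\mathcal S^\ast$, so $d_i'=k-s+i$. I would lift it to $\mathcal S$ rung by rung: an adjoined singleton $[d_i',d_i']$ lifts to the unique rung of the maximal ladder of $\pi$ ending in $d_i'\in\{b'+1,\dots,k\}$; a shortened rung lifts to the original rung of $\pi$ it came from; a merged segment lifts to one of its constituents; an unmodified non-rung segment lifts to itself, or --- invoking maximality of the ladder of $\pi$ --- to a longer segment of $\mathcal S$ ending one level higher. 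A lifted shortened rung ends one level higher than the $\pi'$-rung it replaced, whereas the other kinds stay at the same level, so one fills the resulting gaps with consecutive rungs of the maximal ladder of $\pi$; that these are available is precisely the content of the ``no separating rung'' property from Step~1. The resulting ladder of $\mathcal S$ has length $\ge s$, top rung $[b_0,k]$, and a bottom rung still obeying the special condition, so $s\le\text{len}_k(\mathcal S)$ by maximality.

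\medskip
\noindent\emph{Main obstacle.} The heart of the matter is the interleaving in Step~2: one must show the lifts of singletons and non-rung segments and the inserted rungs of $\pi$ can be arranged so that all right endpoints become consecutive and all left endpoints strictly increase, and that the bottom-rung restriction --- the clause $a'\neq2-e$ in line~10 of the algorithm, i.e.\ $c_1+d_1>1$ unless $c_1=d_1=\tfrac12$ --- is never violated after lifting. That restriction is genuinely essential: already for $\pi$ with $\alpha$-block $\{\drep{1}{3},\drep{0}{2}\}$ and $k=3$ one can exhibit a candidate $L(\Sigma^\ast)$ that would have $\text{len}_3=3>2=\text{len}_3(\pi)$ if the restriction were dropped, so the $a'\neq2-e$ check and the forced-merge analysis of Step~1 are both indispensable. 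I expect this case analysis --- organised around the positions of the adjoined singletons relative to the shortened rungs and around the exceptional segment $[\tfrac12,\tfrac12]$ --- to be the bulk of the proof; everything else is the segment combinatorics of \S\ref{subs_aux} and the recorded properties of the algorithm.
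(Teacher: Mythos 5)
Your approach is genuinely different from the paper's, but it is not a complete proof: the two pieces you label Step~1 and Step~2 are precisely where the difficulty lies, and both are left unverified, as you yourself acknowledge in the final paragraph.

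The paper proves the lemma by a double induction on $p-j'$ and on $k-b'$. For the base case $p-j'=0$, it observes that $T(k-b',0)(\text{alg}(\pi))$ is literally the output of $k-b'$ steps of the algorithm applied to a different irreducible $\pi'$, pins down the exact difference between the $\alpha$-blocks of $\pi$ and $\pi'$, and then shows $\text{len}_k(\pi')\le\text{len}_k(\pi)$ by comparing ladders rung-by-rung (proving $d_i'-c_i'\ge d_i-c_i$ by a nested induction). The inductive step looks at which irreducible subquotient of $\drep{a_{j'}}{b_{j'}}\times\zeta(b'+1,k)$ survives into the quotient: in one case it commutes past and $p-j'$ drops; in the other, one passes through $q=L(\zeta(b'+2,k)\times\drep{a_{j'}}{b'+1})$, slides $q$ left via Lemma~\ref{lem_zel2}, replaces $q$ by $\zeta(b'+2,k)\times\drep{a_{j'}}{b'+1}$, and recognises the result as $T(k-b'-1,\beta)(\text{alg}(\pi''))$ for an explicitly described $\pi''$, with $k-b'$ now decreased. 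In both cases it is only a \emph{single} irreducibility decision that is analysed, which keeps the bookkeeping local; the ladder comparison is isolated to the base case.

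Your plan instead tries to characterise all irreducible quotients of $T(\cdot)(\text{alg}(\pi))$ at once by expanding $\zeta(b'+1,k)$ into a chain of characters and performing an unbounded number of slides and forced merges. The claim in Step~1 — that every resulting standard module has an $\alpha$-block of the prescribed shape (shortened rungs, a subcollection of the adjoined singletons, controlled merges, ``no separating rung'') — is stated with ``one checks'' and not checked; it is nontrivial precisely because several characters can each interact with several shortened or unshortened segments, and the order in which they are slid matters. Step~2 (the lifting of a maximal ladder of $\pi'$ to one of $\pi$) similarly depends on an interleaving case analysis that you flag as ``the bulk of the proof'' and do not carry out, including the treatment of the $a'\neq 2-e$ / $c_1+d_1>1$ restriction and the half-integer exception. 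Until those two gaps are filled, the argument is an outline rather than a proof; by contrast the paper's double induction replaces the global combinatorial control you would need by a sequence of one-step reductions, each of which is verifiable with the tools already on hand (Lemmas~\ref{lemma_zel0}--\ref{lem_zel2}, Proposition~\ref{prop_unique_q}, Remark~\ref{rem_alg}).
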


\begin{proof}
We prove the Lemma for any $\pi \in \Irr(G_n)$ by induction on $p-j'$ and $k-b'$.
Let $[c_1,d_1],\dotsc,[c_t,d_t]$ be the longest ladder with top rung $[c_t,d_t] = [b_0,k]$, as described in Remark \ref{rem_alg} (iii) (here $t = \text{len}_k(\pi)$). First, assume $p-j'= 0$. We claim that
\begin{equation}
\label{eq_len_2}
\drep{a_r}{b_r} \times \dotsm \times \drep{a_p}{b_p} \times \zeta(b'+1,k) \times \drep{a_{p-1}}{b_{p-1}} \times \dotsm \times \drep{a_1}{b_1} \rtimes \tau,
\end{equation}
i.e.~$T(k-b_j',0)(\text{alg}(\pi))$, has a unique irreducible quotient. Indeed, the segments $[a_p,b_p], \dotsc,\allowbreak [a_r,b_r]$ are precisely the same as those in $\text{alg}(\pi)$. Therefore, the above representation is obtained by applying $k-b'$ steps of the algorithm to the $\alpha$-block of some representation $\pi'$. Proposition \ref{prop_unique_q} now shows that $\pi'$ is the unique irreducible quotient of \eqref{eq_len_2}. In fact, the only difference between $\pi$ and $\pi'$ is in the segments which belong to the ladder $[c_t,d_t],\dotsc,[c_1,d_1]$: in place of $[c_i,d_i]$, $\pi'$ has $[c_i,d_i-1]$, for all indices $i = t-k+b_j+1,\allowbreak \dotsc, t-k+b'$ (in other words, the rungs ending in $b_j+1,\dotsc,b'$ are shortened).

Having established the difference between $\pi'$ and $\pi$, we now prove $\text{len}_k(\pi') \leq \text{len}_k(\pi)$. Recall that $t = \text{len}_k(\pi)$. If no ladder of length $t$ exists in the $\alpha$--block of $\pi'$, we are done. 
Otherwise, let $[c_1',d_1'],\dotsc, [c_t',d_t']$ be the ladder of length $t$ with $d_t' = k$ which minimizes segment lengths, in the sense explained in Remark \ref{rem_alg} (iii). We claim that $d_1'-c_1' \geq d_1-c_1$. We prove that $d_i'-c_i' \geq d_i - c_i$ for all $i=t,t-1,\dotsc,1$, by induction. In fact, the discussion in the preceding paragraph explains that $[c_i,d_i] = [c_i',d_i']$ for $i =t, \dotsc, t-k+b'+1$. Now assume that $d_i'-c_i' \geq d_i - c_i$ for some index $i = 2,\dotsc, t$. By construction, $[c_{i-1},d_{i-1}]$ is the shortest segment in the $\alpha$-block of $\pi$ which ends in $d_{i-1}$ and is linked to $[c_i,d_i]$. Since $[c_i',d_i']$ is longer than $[c_i,d_i]$, the only way $[c_{i-1}',d_{i-1}']$ can be shorter than $[c_{i-1},d_{i-1}]$ (and still be linked to $[c_i',d_i']$) is if $[c_{i-1}',d_{i-1}']$ is one of the segments in which $\pi'$ differs from $\pi$, i.e.~one of the segments altered by the algorithm. This would imply $[c_{i-1}',d_{i-1}'] = [c_i,d_i-1]$; however, in that case $[c_i,d_i-1]$ is contained by $[c_i',d_i']$, so it cannot form the next rung.

This completes the inductive step and proves that $d_1'-c_1' \geq d_1-c_1$. Consequently, the ladder in $\pi'$ is even wider than the one in $\pi$, so it cannot be extended lower by one of the segments from the $\alpha$-block of $\pi$---this would contradict the maximum length property of the chosen ladder. The only other available segment is $[c_1,d_1-1]$, but (just like in the above induction step), this segment is contained in $[c_1',d_1']$, so it cannot extend the ladder.

We have thus completed the base case $p-j'=0$. The other base case, that is, the case when $b' = k$ (so that $[b'+1,k]$ is empty) is treated using a similar inductive argument; we leave the details to the reader.

With the base cases settled, we now turn to the general induction step. The induction hypothesis is that the statement of this lemma is valid for any irreducible representation $\sigma$ of $G_n$ (not just $\pi$) and any representation $T(a,b)(\text{alg}(\sigma))$ for $(a,b)$ such that either $(a,b)=(k-b',p-j'-1)$ or $(a,b)=(k-b'-1,\beta),$ for any $\beta$.

Now let $\pi'$ be an irreducible quotient of \eqref{eq_len_k}. 
We have two cases, depending on which subquotient of $\drep{a_{j'}}{b_{j'}} \times \zeta(b'+1,k)$ participates in the epimorphism onto $\pi'$. If it is the (unique) quotient of $\zeta(b'+1,k)\times \drep{a_{j'}}{b_{j'}}$, then instead of \eqref{eq_len_k} we may write
\[
\drep{a_r}{b_r} \times \dotsm \times \zeta(b'+1,k) \times \drep{a_{j'}}{b_{j'}} \times  \drep{a_{j'-1}}{b_{j'-1}} \times \dotsm \times \drep{a_1}{b_1} \rtimes \tau \twoheadrightarrow \pi',
\]
i.e.~$T(k-b',p-j'-1)(\textnormal{alg}(\pi)) \twoheadrightarrow \pi'$. We have thus decreased $p-j'$ to $p-j'-1$, and the induction step is done. Note that this works whenever $b_{j'} \neq b'$ so that $\drep{a_{j'}}{b_{j'}} \times \zeta(b'+1,k)$ is irreducible.

When $b_{j'} = b'$, we need to consider another subquotient, namely the unique irreducible quotient of $\zeta(b'+2,k) \times \drep{a_{j'}}{b'+1}$, which we temporarily denote by $q$. Now assume that $q$ is the irreducible subquotient of $\drep{a_{j'}}{b_{j'}} \times \zeta(b'+1,k)$ which participates in \eqref{eq_len_k}, so that we have
\[
\drep{a_r}{b_r} \times \dotsm \times q \times  \drep{a_{j'-1}}{b_{j'-1}} \times \dotsm \times \drep{a_1}{b_1} \rtimes \tau \twoheadrightarrow \pi'.
\]
According to Lemma \ref{lem_zel2}, we may now push $q$ to the left by switching its position with any segments of the form $[c,b']$ with $c \geq a_j'$, and also $[a,b'+1]$ with $a \leq a_{j'}$. We let $[a_{j''},b_{j''}]$ denote the leftmost of all such segments. Performing these switches, we get
\[
\drep{a_r}{b_r} \times \dotsm \times q \times  \drep{a_{j''}}{b_{j''}} \times \dotsm \times \drep{a_1}{b_1} \rtimes \tau \twoheadrightarrow \pi'.
\]
Finally, taking into account $\zeta(b'+2,k) \times \drep{a_{j'}}{b'+1} \twoheadrightarrow q$, we obtain
\begin{equation}
\label{eq_len_k2}
\drep{a_r}{b_r} \times \dotsm \times \zeta(b'+2,k) \times \drep{a_{j'}}{b'+1} \times  \drep{a_{j''}}{b_{j''}} \times \dotsm \times \drep{a_1}{b_1} \rtimes \tau \twoheadrightarrow \pi'.
\end{equation}
Notice that this is a representation of the form $T(k-b'-1,\beta)(\text{alg}(\pi''))$, for a certain $\pi''$ we now describe (here $\beta$ denotes the suitable value for the second parameter of $T$ which is unknown, but also irrelevant to our argument). In general, $\pi''$ is not necessarily unique (as the transformations $T \circ \text{alg}$ are not injective). However, notice that, in going from \eqref{eq_len_k} to \eqref{eq_len_k2}, the only changes we made to the segments (apart from reordering) were shortening $\zeta(b'+1,k)$ to $\zeta(b'+2,k)$ and changing $\drep{a_{j'}}{b_{j'}}$ to $\drep{a_{j'}}{b'+1}$. Therefore, we may choose $\pi''$ so that its standard module is obtained by altering exactly two segments in the $\alpha$-block of $\pi$, by transfering the cuspidal representation $|\cdot|^{b'+1}$ from $[a_p,b_p+1]$ to $[a_{j'},b_{j'}]$ (recall that $b_{j'} = b_p = b'$). In other words, the only difference in the the $\alpha$-blocks of $\pi$ and $\pi'$ is
\[
\pi
\begin{cases}
[a_p,b_p+1]\\
[a_{j'},b_{j'}]
\end{cases}
\mapsto
\pi'' \begin{cases}
[a_p,b_p]\\
[a_{j'},b_{j'}+1]
\end{cases}
\]
To summarize, $\pi'$ is a quotient of $T(k-b'-1,\beta)(\text{alg}(\pi))$. Since we have now decreased $k-b'$ to $k-b'-1$, the induction hypothesis applies, so we have $\text{len}_k(\pi') \leq \text{len}_k(\pi'')$. Thus, to complete the inductive step, it is enough to check that $\text{len}_k(\pi'') \leq \text{len}_k(\pi)$. This is done by imitating the inductive argument used in the base cases; we leave this simple verification to the reader.
\end{proof}
To illustrate the induction step in the above lemma, we provide another example.
\begin{exmp}
\label{ex_len_k}
Let $\pi$ be the unique irreducible quotient of the standard module
\[
\drep{5}{6} \times \drep{4}{5} \times \drep{1}{5} \times \drep{3}{4} \times \drep{2}{4} \rtimes \tau.
\]
Running the algorithm  with $k=6$ gives us $\text{alg}(\pi)$:
\[
|\cdot|^5 \times \drep{1}{5} \times |\cdot|^4 \times \drep{2}{4} \times |\cdot|^3 \times\zrep{4}{6} \rtimes \tau.
\]
For the sake of the example, let us now apply $T(2,1)$ to $\text{alg}(\pi)$---notice that by setting $2$ as the first parameter of $T$, we are implicitly also setting $[a_p,b_p] = [4,4]$ (because $|\cdot|^4$ is associated to the unique  segment which ends in $4$ and was altered by the algorithm). Thus $T(2,1)(\text{alg}(\pi))$ equals
\[
|\cdot|^5 \times \drep{1}{5} \times |\cdot|^4 \times \drep{2}{4} \times\zrep{5}{6} \times |\cdot|^3  \rtimes \tau.
\]
We let $\pi'$ be an irreducible quotient of the above representation. To illustrate the inductive step, assume that we are in the non-trivial case, where $q = L(|\cdot|^6 \times \drep{2}{5})$ is the irreducible subquotient of $\drep{2}{4} \times\zrep{5}{6}$ which participates in the epimorphism $T(2,1)(\text{alg}(\pi)) \twoheadrightarrow \pi'$. We then have
\[
|\cdot|^5 \times \drep{1}{5} \times |\cdot|^4 \times q \times |\cdot|^3  \rtimes \tau \twoheadrightarrow \pi'.
\]
As mentioned in the proof, Lemma \ref{lem_zel2} now allows us to push $q$ to the left:
\[
|\cdot|^5 \times q \times \drep{1}{5} \times |\cdot|^4 \times |\cdot|^3  \rtimes \tau \twoheadrightarrow \pi'.
\]
Recalling that $q$ is a quotient of $|\cdot|^6 \times \drep{2}{5}$, we thus get
\[
|\cdot|^5 \times |\cdot|^6 \times \drep{2}{5} \times  \drep{1}{5} \times |\cdot|^4 \times |\cdot|^3  \rtimes \tau \twoheadrightarrow \pi'.
\]
As explained in the proof, this left hand side can be viewed as $T(1,0)(\text{alg}(\pi''))$, where $\pi''$ is the unique irreducible quotient of
\[
\drep{5}{6} \times \drep{2}{5} \times  \drep{1}{5} \times |\cdot|^4 \times |\cdot|^3  \rtimes \tau.
\]
Indeed, applying only one iteration of the algorithm with $k=6$, we obtain $\text{alg}(\pi'')$:
\[
|\cdot|^5 \times |\cdot|^6 \times \drep{2}{5} \times  \drep{1}{5} \times |\cdot|^4 \times |\cdot|^3  \rtimes \tau.
\]
Now $T(1,0)$ does not change anything (because $b'+1 = k = 6$ and $[a_p,b_p]$ is the segment $[5,5]$), so the above representation is already equal to $T(1,0)(\text{alg}(\pi''))$. It remains to compare the standard modules of $\pi$ and $\pi''$. In this case, it is easy to see that (with $k=6$) $\text{len}_k(\pi) = 3$, whereas $\text{len}_k(\pi'') = 2$. Thus $\text{len}_k(\pi'') \leq \text{len}_k(\pi)$, as claimed in the proof.
\end{exmp}

\section{Theta correspondence}
\label{sec_theta}
In this section we review some general results and fix the notation for theta correspondence. We also prove several auxiliary results which we use in subsequent sections.
\subsection{Howe duality}
\label{subs_howe}
Let $\omega_{m,n}$ be the Weil representation of $G(W_n) \times H(V_m)$. The Weil representation depends on the choice of a non-trivial additive character $\psi\colon F\to\C$. This character will be fixed throughout, so we omit it from the notation. Similarly, if the dimensions $m$ and $n$ are known, we will often simply write $\omega$ instead of $\omega_{m,n}$.

For any $\pi \in \Irr(G(W_n))$, the maximal $\pi$-isotypic quotient of $\omega_{m,n}$ is of the form
\[
\pi \otimes \Theta(\pi,V_m)
\]
for a certain smooth representation $\Theta(\pi,V_m)$ of $H(V_m)$, called the full theta lift of $\pi$ (see \cite[Chapter II, III.4]{MVW_Howe}). When the target Witt tower is fixed, we will denote it by $\Theta(\pi,m)$ or, more often, by $\Theta_l(\pi)$, where $l = n + \epsilon - m$ (we recall that $\epsilon$ is defined in 2.1). Note that $l$ is always an integer. Its parity is determined by the dual pair: $l$ is odd when we are working with the symplectic--even orthogonal dual pair, and it is even in case of the metaplectic--odd orthogonal pair. We also let $\kappa \in \{1,2\}$ such that $\kappa \equiv l \pmod 2$.

The following result establishes the theta correspondence:
\begin{thm}[Howe duality]
\label{thm_Howe}
If $\Theta(\pi,V_m)$ is non-zero, it possesses a unique irreducible quotient, denoted $\theta(\pi,V_m)$.
\end{thm}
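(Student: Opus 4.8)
The statement is precisely the Howe duality conjecture, so the plan is to deduce it from the known proofs in the literature rather than to reprove it from scratch. First I would recall the foundational finiteness results going back to Howe and Kudla (cf.~\cite{Howe_theta_series}, \cite{Kudla1}): for any $\pi \in \Irr(G(W_n))$, the full theta lift $\Theta(\pi, V_m)$ is a smooth representation of $H(V_m)$ which is of finite length (in particular, finitely generated and admissible). Consequently, as soon as $\Theta(\pi, V_m) \neq 0$, it admits at least one irreducible quotient; the real content of the theorem is therefore the \emph{uniqueness} of that quotient, i.e.~that the maximal semisimple quotient of $\Theta(\pi, V_m)$ is irreducible.

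For the uniqueness, I would invoke Waldspurger's theorem \cite{Waldspurger_howe_duality}, which establishes the Howe duality conjecture whenever the residual characteristic of $F$ is odd, together with the work of Gan and Takeda \cite{Gan_Takeda_proof_of_Howe}, which removes the restriction on the residual characteristic and covers all the relevant dual pairs; for the metaplectic--orthogonal pairs one additionally uses the formalism of \cite{Gan_Savin_Metaplectic_2012}. Combining the finite length property with the uniqueness statement immediately yields the theorem in the generality needed here, and we define $\theta(\pi,V_m)$ to be this unique irreducible quotient.

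The genuinely hard step---were one to carry out the argument rather than cite it---is exactly the uniqueness assertion. Gan and Takeda's proof proceeds via the doubling method, analyzing degenerate principal series of the doubled group together with a careful computation of Jacquet modules of the Weil representation and the use of the M\oe glin--Vigneras--Waldspurger involution (cf.~\cite{MVW_Howe}), ultimately reducing the general case to explicit low-rank computations. Since these results are available in precisely the form we require, we content ourselves with quoting them and do not reproduce the argument.
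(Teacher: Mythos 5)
Your proposal matches the paper's treatment exactly: the theorem is stated as the Howe duality conjecture, with the remark immediately following attributing the proof to Waldspurger (odd residual characteristic) and to Gan--Takeda (in general), precisely as you do. The additional background you supply on finiteness and on the doubling method is accurate but not part of the paper's two-line attribution.
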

\noindent Originally conjectured by Howe in \cite{Howe_theta_series}, this was first proven by Waldspurger \cite{Waldspurger_howe_duality} when the residual characteristic of $F$ is different from $2$, and by Gan and Takeda \cite{Gan_Takeda_proof_of_Howe} in general.
The representation $\theta(\pi,V_m)$ is called the (small) theta lift of $\pi$; like the full lift, it will also be denoted $\theta(\pi,m)$ and $\theta_l(\pi)$. The following simple fact is often useful (see Lemma 1.1 of \cite{Muic_theta_discrete_Israel}):
\begin{lem}
\label{lem_ThetaHom}
For $\pi \in \Irr(G(W_n))$ we have
\[
\Theta(\pi,m)^\vee = \Hom_{G_n}(\omega_{m,n},\pi)_\infty.
\]
\end{lem}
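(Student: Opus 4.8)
The plan is to unwind the definition of the full theta lift and combine it with Schur's lemma. Recall that $\Theta(\pi,m)$ is, by construction, the smooth representation of $H(V_m)$ such that $\omega_{m,n}/\mathcal N \cong \pi\otimes\Theta(\pi,m)$ as representations of $G_n\times H(V_m)$, where $\mathcal N$ is the $G_n\times H(V_m)$-invariant subspace obtained as the intersection of the kernels of all $\phi\in\Hom_{G_n}(\omega_{m,n},\pi)$ (cf. \cite[Ch. II, III.4]{MVW_Howe}); here $H(V_m)$ acts on $\Hom_{G_n}(\omega_{m,n},\pi)$ by $(h\cdot\phi)(v)=\phi(h^{-1}v)$, and this action need not be smooth, which is exactly why the smoothing operation appears on the right-hand side.

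First I would observe that, by the very definition of $\mathcal N$, every $\phi\in\Hom_{G_n}(\omega_{m,n},\pi)$ factors uniquely through the quotient $\omega_{m,n}\to\omega_{m,n}/\mathcal N$, so that restriction induces an $H(V_m)$-equivariant isomorphism
\[
\Hom_{G_n}(\omega_{m,n},\pi)\;\xrightarrow{\ \sim\ }\;\Hom_{G_n}(\omega_{m,n}/\mathcal N,\pi)=\Hom_{G_n}\bigl(\pi\otimes\Theta(\pi,m),\pi\bigr).
\]
Then I would identify the right-hand side. As a $G_n$-representation, $\pi\otimes\Theta(\pi,m)$ is a (possibly infinite) direct sum of copies of $\pi$; since $\pi$ is irreducible and admissible, Schur's lemma gives $\mathrm{End}_{G_n}(\pi)=\C$, so for each vector $w$ in the space of $\Theta(\pi,m)$ the restriction of any $F\in\Hom_{G_n}(\pi\otimes\Theta(\pi,m),\pi)$ to $\pi\otimes w$ is a scalar $\lambda_F(w)$. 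The assignment $F\mapsto\lambda_F$ is a linear isomorphism onto the full linear dual $\Theta(\pi,m)^{*}$, with inverse $\lambda\mapsto(v\otimes w\mapsto\lambda(w)v)$, and it is $H(V_m)$-equivariant once $\Theta(\pi,m)^{*}$ is equipped with the contragredient action, because the quotient map $\omega_{m,n}\to\pi\otimes\Theta(\pi,m)$ is $H(V_m)$-equivariant with $H(V_m)$ acting only on the second tensor factor. Composing the two displayed identifications yields an $H(V_m)$-equivariant isomorphism $\Hom_{G_n}(\omega_{m,n},\pi)\cong\Theta(\pi,m)^{*}$.

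Finally, taking smooth vectors on both sides and using that the smooth part of the full linear dual of a smooth representation is, by definition, its smooth contragredient, I obtain
\[
\Hom_{G_n}(\omega_{m,n},\pi)_\infty\cong\bigl(\Theta(\pi,m)^{*}\bigr)_\infty=\Theta(\pi,m)^\vee,
\]
which is the assertion. The only points needing care are the precise description of $\mathcal N$ (equivalently, that the maximal $\pi$-isotypic quotient of $\omega_{m,n}$ is cut out precisely by all $G_n$-maps into $\pi$) and the bookkeeping of the $H(V_m)$-action through the chain of identifications; both are routine. An alternative, essentially equivalent route uses the description $\Theta(\pi,m)=(\omega_{m,n}\otimes\pi^\vee)_{G_n}$ of the full lift as $G_n$-coinvariants, together with $\pi^{\vee\vee}\cong\pi$ (valid since $\pi$ is admissible) and the tautological identity $\Hom_{G_n}(\omega_{m,n},\pi)=\bigl((\omega_{m,n}\otimes\pi^\vee)_{G_n}\bigr)^{*}$, after which one again passes to smooth vectors.
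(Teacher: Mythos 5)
Your proof is correct, and it is essentially the standard argument. The paper itself does not prove this lemma but simply cites Lemma~1.1 of Mui\'{c}'s paper; what you have written out is in substance the same short verification that reference contains (factor through $\omega/\mathcal{N} \cong \pi\otimes\Theta(\pi,m)$, apply Schur's lemma $\mathrm{End}_{G_n}(\pi)=\C$ to identify $\Hom_{G_n}(\pi\otimes\Theta(\pi,m),\pi)$ with the full linear dual $\Theta(\pi,m)^{*}$ as $H(V_m)$-modules, then pass to smooth vectors). The key points you flag as needing care are exactly the right ones, and your handling of them — in particular the check that the induced $H(V_m)$-action on $\Theta(\pi,m)^{*}$ is the contragredient one — is correct. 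The only spot worth a half-sentence more of justification is the alternative route at the end: the identity $\Hom_{G_n}(\omega,\pi)\cong\Hom_{G_n}(\omega,(\pi^\vee)^{*})$ relies on the fact that any $G_n$-map from the smooth representation $\omega$ into $(\pi^\vee)^{*}$ automatically lands in the smooth vectors $\pi^{\vee\vee}\cong\pi$; but this is minor, and your main argument does not depend on it.
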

\subsection{First occurrence in towers}
\label{subs_first_occurrence}
A basic fact concerning theta correspondence is expressed by the following proposition (see Propositions 4.1 and 4.3 of \cite{Kudla1}):
\begin{prop}[Tower property]
\label{prop_towers}
Let $\pi$ be an irreducible representation of $G(W_n)$. Fix a Witt tower $\mathcal{V} = (V_m)$.
\begin{enumerate}[(i)]
\item If $\Theta(\pi,V_m) \neq 0$, then $\Theta(\pi,V_{m+2r}) \neq 0$ for all $r\geqslant 0$.
\item For $m$ large enough, we have $\Theta(\pi,V_m) \neq 0$.
\end{enumerate}
\end{prop}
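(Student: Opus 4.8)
The plan is to prove the two parts by different methods: part~(i) by a Jacquet-module argument linking consecutive rungs of the tower, and part~(ii) by the stable-range realization of the Weil representation. For part~(i), I would first translate the statement via Lemma~\ref{lem_ThetaHom}: $\Theta(\pi,V_m)\neq 0$ is equivalent to $\Hom_{G(W_n)}(\omega_{m,n},\pi)\neq 0$, and by definition it is also equivalent to the existence of the surjection $\omega_{m,n}\twoheadrightarrow \pi\otimes\Theta(\pi,V_m)$ onto the maximal $\pi$-isotypic quotient. The key geometric input, due to Kudla (see \cite[Prop.~4.1]{Kudla1}), is the structure of the Jacquet module $R_{Q_1}(\omega_{m+2,n})$ along the maximal parabolic $Q_1=M_1N_1\subset H(V_{m+2})$ with $M_1\cong \GL_1(F)\times H(V_m)$, where $V_{m+2}=V_m\oplus\mathbb{H}$: this Jacquet module carries a finite $\GL_1(F)\times H(V_m)\times G(W_n)$-equivariant filtration one of whose constituents is isomorphic to $\chi\nu^{s_0}\otimes\omega_{m,n}$, for an explicit character $\chi$ of $F^\times$ and $s_0\in\mathbb{R}$ depending only on $m$, on the tower (through $\chi_V$), and on $\psi$.

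Composing the resulting map $R_{Q_1}(\omega_{m+2,n})\twoheadrightarrow \chi\nu^{s_0}\otimes\omega_{m,n}$ with $\omega_{m,n}\twoheadrightarrow\pi\otimes\Theta(\pi,V_m)$ produces a nonzero $M_1\times G(W_n)$-equivariant map $R_{Q_1}(\omega_{m+2,n})\to \big(\chi\nu^{s_0}\otimes\Theta(\pi,V_m)\big)\otimes\pi$; by the first form of Frobenius reciprocity recalled in \S\ref{subs:reps} (applied in the $H(V_{m+2})$-variable, carried along $G(W_n)$-equivariantly) this is the same datum as a nonzero $H(V_{m+2})\times G(W_n)$-map
\[
\omega_{m+2,n}\longrightarrow \Ind_{Q_1}^{H(V_{m+2})}\!\big(\chi\nu^{s_0}\otimes\Theta(\pi,V_m)\big)\otimes\pi .
\]
Restricted to $G(W_n)$ the target is a direct sum of copies of $\pi$, so composing with a coordinate projection gives a nonzero element of $\Hom_{G(W_n)}(\omega_{m+2,n},\pi)$, i.e.\ $\Theta(\pi,V_{m+2})\neq 0$; induction on $r$ then yields~(i). (Note this argument does not use Theorem~\ref{thm_Howe}; and if the relevant filtration constituent is more naturally a subrepresentation than a quotient of $R_{Q_1}$, one uses the second form of Frobenius reciprocity instead, or passes between the two via the MVW identity $\omega_{m,n}^\vee\cong\omega_{m,n}$ up to a twist.)

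For part~(ii), by~(i) it suffices to produce a single $m$ with $\Theta(\pi,V_m)\neq 0$, and here I would invoke the classical stable-range nonvanishing of Rallis (see \cite[Prop.~4.3]{Kudla1}): for $m$ large enough that the Witt index of $V_m$ is at least $n$, realize $\omega_{m,n}$ in a Schrödinger/mixed model attached to a polarization of $V_m\otimes W_n$ built from a maximal isotropic subspace of $V_m$ of dimension $\ge n$. In that model $\omega_{m,n}$ lives on a space of Schwartz functions on which $G(W_n)$ acts geometrically, with an open orbit whose stabilizer is anisotropic (hence compact); consequently $\omega_{m,n}|_{G(W_n)}$ surjects onto the regular representation and therefore onto every $\pi\in\Irr(G(W_n))$, so $\Hom_{G(W_n)}(\omega_{m,n},\pi)\neq 0$ and $\Theta(\pi,V_m)\neq 0$.

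The representation-theoretic wrapper above is entirely formal (Lemma~\ref{lem_ThetaHom}, Frobenius reciprocity, and a coordinate projection); the real content, and hence the main obstacle, is the explicit analysis of the Weil representation in both steps — the Kudla filtration of $R_{Q_1}(\omega_{m+2,n})$ for~(i) and the stable-range Schrödinger model for~(ii). The only delicate bookkeeping is pinning down the twist $\chi\nu^{s_0}$ and tracking the splittings and the dependence on $\psi$ and on $\chi_V$; since this paper cites \cite{Kudla1} for the proposition, I would treat both structural facts as black boxes and not re-derive them. Finally, the argument is uniform across the symplectic and metaplectic cases, as the relevant structural facts for $\omega_{m,n}$ hold verbatim when $G(W_n)=\textnormal{Mp}(W_n)$.
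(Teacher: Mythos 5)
The paper does not give its own proof of this proposition; it simply cites Propositions 4.1 and 4.3 of Kudla's lecture notes \cite{Kudla1}, and your sketch is an accurate account of exactly that argument (persistence via the top quotient $J^0\cong\chi\nu^{s_0}\otimes\omega_{m,n}$ of the Jacquet module $R_{Q_1}(\omega_{m+2,n})$ plus Frobenius reciprocity for part~(i), and Rallis's stable-range nonvanishing for part~(ii)). Two small points worth being precise about: the filtration you need is the one along a parabolic of $H$, which is the mirror image of the version recorded as Theorem~\ref{thm_Kudla} in this paper (along $P_k\subset G(W_n)$), so one must invoke the $H$-side statement from \cite{Kudla1} directly; and in part~(ii) the $G(W_n)$-orbit you restrict to is \emph{closed} (with trivial, hence a fortiori anisotropic, stabilizer), which is what makes restriction a $G$-equivariant \emph{quotient} $\omega_{m,n}\twoheadrightarrow C_c^\infty(G(W_n))$ — phrasing it as an ``open orbit'' obscures why you get a surjection rather than a submodule.
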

The above proposition implies that we can define, for any Witt tower $\mathcal{V} = (V_m)$,
\[
m_\mathcal{V}(\pi) = \min\{m \geqslant 0: \Theta(\pi,V_m) \neq 0\}.
\]
This number (also denoted $m(\pi)$ when the choice of $\mathcal{V}$ is implicit) is called the first occurrence index of $\pi$. Note that we are using the term “index” here to signify the dimension, although it would be more appropriate to use it for the Witt index of the corresponding space.

We recall the so-called conservation relation. The Witt towers of quadratic spaces can be appropriately organized into pairs, with the towers comprising a pair denoted $\mathcal{V}^+$ and $\mathcal{V}^-$; a complete list of pairs of dual towers can be found in \cite[Chapter V]{Kudla1}. Thus, instead of observing just one target tower, we can simultaneously look at two of them. This way, for each $\pi \in \Irr(G(W_n))$ we get two corresponding first occurrence indices, $m^+(\pi)$ and $m^-(\pi)$.

If $\epsilon = -1$ so that $W_n$ is a quadratic space, we proceed as follows: since $G(W_n)$ is now equal to $O(W_n)$, any $\pi \in \Irr(G(W_n))$ is naturally paired with its twist, $\det \otimes \pi$. This allows us to define
\begin{align*}
m^+(\pi) &= \min\{m(\pi), m(\det\otimes\pi)\},\\
m^-(\pi) &= \max\{m(\pi), m(\det\otimes\pi)\}.
\end{align*}
We are now able to set
\[
m^{\text{down}}(\pi) =  \min\{m^+(\pi), m^-(\pi)\}, \quad m^{\text{up}}(\pi) =  \max\{m^+(\pi), m^-(\pi)\}
\]
regardless of whether $\epsilon = 1$ or $\epsilon = -1$. Note that when $W_n$ is a quadratic space, we have $m^{\text{down}}(\pi) = m^+(\pi) $ and $ m^{\text{up}}(\pi) = m^-(\pi)$. The conservation relation (first conjectured by Kudla and Rallis in \cite{Kudla_Rallis_progress}, completely proven by Sun and Zhu in \cite{Sun_Zhu_conservation}) states that
\[
m^{\text{up}}(\pi) + m^{\text{down}}(\pi) = 2n + 2\epsilon + 2.
\]
The tower in which $m(\pi) = m^{\text{down}}(\pi)$ (resp.~$m^{\text{up}}(\pi)$) is called the going-down (resp.~going-up) tower.
\subsection{Kudla's filtration}
\label{subs_Kudla}
We now review Kudla's filtration of $R_P(\omega)$, the Jacquet module of the Weil representation (see Theorem 2.8 of \cite{Kudla2}). We state it here---formulated as in Theorem 5.1 of \cite{Atobe_Gan}---along with a few useful corollaries.
\begin{thm}[Kudla's filtration]
\label{thm_Kudla}
The Jacquet module $R_{P_k}(\omega_{m,n})$ possesses an $\GL_k(F)\times G(W_{n-2k}) \times H(V_m)$-equivariant filtration
\[
R_{P_k}(\omega_{m, n}) = R^0 \supset R^1 \supset \dotsb \supset R^k \supset R^{k+1} = 0
\]
in which the successive quotients $J^a = R^a/R^{a+1}$ are given by
\[
J^a = \text{Ind}_{P_{k-a,a}\times G_{n-2k}\times Q_a}^{\GL_k(F)\times G_{n-2k}\times H_m}\left({\chi_V|\emph{\text{det}}_{\GL_{k-a}}|^{\lambda_{k-a}}\otimes \Sigma_a \otimes \omega_{m-2a, n-2k}}\right),
\]
where
\begin{itemize}
\item $\lambda_{k-a} = (m-n+k-a-\epsilon)/2$;
\item $P_{k-a,a} =$ standard parabolic subgroup of $\GL_k(F)$ with Levi factor $\GL_{k-a}(F) \times \GL_a(F)$;
\item $\Sigma_a = C_c^\infty(\GL_a(F))$, the space of locally constant compactly supported functions on $\GL_a(F)$. The action of $\GL_a(F) \times \GL_a(F)$ on $\Sigma_a$ is given by
\[
[(g,h)\cdot f](x) = \chi_V(\det(g))\chi_W(\det(h))f(g^{-1}\cdot x\cdot h).
\]
\end{itemize}
If $m-2a$ is less than the dimension of the first (anisotropic) space in $\mathcal{V}$, we put $R^a=J^a=0$.
\end{thm}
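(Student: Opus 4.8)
Since Kudla's filtration is a classical computation, our ``proof'' would really be a reference to \cite[Theorem 2.8]{Kudla2} (formulated as in \cite[Theorem 5.1]{Atobe_Gan}); for completeness we outline the strategy one would use to reprove it. The plan is to compute the Jacquet module $R_{P_k}(\omega_{m,n})$ directly in a \emph{mixed Schr\"odinger model} of the Weil representation adapted to the isotropic subspace $X_k \subset W_n$ that $P_k$ stabilizes. Fix a complementary isotropic $X_k'$, giving $W_n = X_k \oplus W_{n-2k} \oplus X_k'$; tensoring with $V_m$ decomposes the symplectic space $W_n \otimes V_m$ so that $X_k \otimes V_m$ and $X_k'\otimes V_m$ are isotropic and dually paired. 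Combining this with a polarization $W_{n-2k}\otimes V_m = Y_0 \oplus Y_0^{*}$ produces a Lagrangian of $W_n \otimes V_m$, and the corresponding Schr\"odinger model yields a $(P_k\times H(V_m))$-equivariant isomorphism $\omega_{m,n} \cong \mathcal{S}(X_k \otimes V_m)\,\widehat{\otimes}\,\omega_{m,n-2k}$, under which the action of $P_k \times H(V_m)$ is explicit.

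Next one records that action and passes to $N_k$-coinvariants. In the model, the Levi $\GL(X_k)\times G(W_{n-2k})$ acts through its linear action on $\mathcal{S}(X_k\otimes V_m)$ --- this is the source of the twist by $\chi_V$ and by a power of $|\det|$, the exponent being pinned down by the normalization of $R_{P_k}$ --- and through the Weil action on $\omega_{m,n-2k}$; $H(V_m)$ acts diagonally; the centre $Z_k$ of $N_k$ acts on $\mathcal{S}(X_k\otimes V_m)$ by multiplication by quadratic characters in the $X_k\otimes V_m$-variable built from the form on $V_m$; and $N_k/Z_k$ acts by a mixture of translations on $\mathcal{S}(X_k\otimes V_m)$ and a Heisenberg action on $\omega_{m,n-2k}$. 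Taking $Z_k$-coinvariants kills the functions supported away from the ``null cone'' $\mathcal{C} \subset X_k\otimes V_m = \Hom(X_k^{*},V_m)$ of maps with totally isotropic image, so one is reduced to $\mathcal{S}(\mathcal{C})$ together with the residual $N_k/Z_k$-action.

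Now stratify $\mathcal{C}$ by the rank $a$ of the map, $0 \le a \le \min(k, \textnormal{Witt index of } V_m)$. The locus $\mathcal{C}_{\ge a}$ of rank $\ge a$ is open in $\mathcal{C}$, so $\mathcal{S}(\mathcal{C}_{\ge a})$ (extension by zero) is a subrepresentation; this yields the filtration with $R^a = \mathcal{S}(\mathcal{C}_{\ge a})$, $R^0 = R_{P_k}(\omega_{m,n})$, $R^{k+1}=0$, and $J^a = R^a/R^{a+1} = \mathcal{S}(\mathcal{C}_{=a})$, the functions on the locally closed rank-$a$ stratum. The stratum $\mathcal{C}_{=a}$ is empty --- hence $R^a = J^a = 0$ --- exactly when $V_m$ has no $a$-dimensional isotropic subspace, i.e.\ when $m-2a$ is less than the dimension of the anisotropic kernel of $\mathcal{V}$. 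On $\mathcal{C}_{=a}$ the kernel of the map $X_k^{*}\to V_m$ is a $(k-a)$-dimensional subspace whose stabilizer in $\GL(X_k)$ is the parabolic $P_{k-a,a}$, with the $\GL_{k-a}$-factor contributing only the character $\chi_V|\det|^{\lambda_{k-a}}$; choosing the $a$-dimensional isotropic image and a complementary isotropic subspace splits $V_{m-2a}$ off from $V_m$, and the residual $N_k/Z_k$-coinvariants turn the Weil datum into $\omega_{m-2a,n-2k}$, accounting for the induction from $Q_a$; the remaining data --- a frame identifying $X_k^{*}/\ker$ with the image --- is acted on by $\GL_a\times\GL_a$ exactly as on $\Sigma_a = C_c^\infty(\GL_a(F))$, with the twists by $\chi_V\circ\det$ and $\chi_W\circ\det$. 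Induction in stages then reassembles $J^a$ into the asserted form.

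The point that genuinely requires work, rather than appeal to generalities, is the bookkeeping: one must verify the $(P_k\times G_{n-2k}\times H_m)$-stability of each $R^a$ (not just of the underlying subspace), and track every normalization twist --- the two half-sums of roots (for the $P_{k-a,a}$-induction inside $\GL_k(F)$ and for the $Q_a$-induction inside $H(V_m)$), the characters $\chi_V$ and $\chi_W$ coming from the fixed splitting of the metaplectic cover over the dual pair, and the exact exponent $\lambda_{k-a}=(m-n+k-a-\epsilon)/2$. In the metaplectic case ($m$ odd) one must additionally carry the genuine twist $\chi_\psi$ on the $\GL$-factors (cf.\ \S\ref{subs:reps}) consistently through the computation. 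Once this is done one recovers precisely the statement above, and we refer to \cite{Kudla2} and \cite[\S5]{Atobe_Gan} for the details.
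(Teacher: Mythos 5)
The paper does not prove this theorem at all — it is stated as a review, with the reader referred to \cite[Theorem 2.8]{Kudla2} and \cite[Theorem 5.1]{Atobe_Gan} — so your proposal, which cites the same sources, takes the same approach. Your appended outline (mixed Schr\"odinger model for the polarization $W_n = X_k\oplus W_{n-2k}\oplus X_k'$, passage to $Z_k$- and then $N_k$-coinvariants, stratification of the null cone in $\Hom(X_k^*,V_m)$ by rank, and the open/closed exact sequence of Schwartz spaces giving $J^a=\mathcal{S}(\mathcal{C}_{=a})$) is an accurate summary of Kudla's argument, and you correctly flag where the real work lies, namely the normalization bookkeeping for $\lambda_{k-a}$, the $\chi_V,\chi_W$ twists, and the genuine twist in the metaplectic case.
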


We will often use the following proposition derived from the previous theorem (see Corollary 3.2 of \cite{Muic_theta_discrete_Israel} and also Proposition 5.2, \cite{Atobe_Gan}):
\begin{prop}
\label{prop_Muic_isotype}
Let $\delta$ be an irreducible essentially square integrable representation of $\GL_k(F)$ and $\pi_0 \in \Irr (G_{n-2k})$, for some $k>0$. Then the space $\Hom_{\GL_k(F)\times G_{n-2k}}(J^a, \chi_V\delta^\vee\otimes \pi_0)_\infty$, viewed as a representation of $H_m$, is isomorphic to
\[
\begin{cases}
\chi_W^{-1}\delta^\vee \rtimes \Hom_{G_{n-2k}}(\omega_{m-2k,n-2k}, \pi_0)_\infty, &\text{if } a = k, \\
\chi_W^{-1}\text{St}_{k-1}\nu^{\frac{k-l+1}{2}} \rtimes \Hom_{G_{n-2k}}(\omega_{m-2k+2,n-2k}, \pi_0)_\infty,&\text{if } a = k-1 \text{ and } \delta = \text{St}_k\nu^{\frac{l-k}{2}}, \\
0, &\text{otherwise.}
\end{cases}
\]
\end{prop}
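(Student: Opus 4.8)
The plan is to feed Kudla's description of $J^a$ (Theorem~\ref{thm_Kudla}) into the $\Hom$ and unwind it by iterated Frobenius reciprocity, isolating a purely $\GL$-theoretic computation. The first point to notice is that inside $J^a$ the group $G_{n-2k}$ acts only through the Weil factor $\omega_{m-2a,n-2k}$, whereas the induction is carried out in the two independent variables $\GL_k(F)$ (from $P_{k-a,a}$) and $H_m$ (from $Q_a$). Since $\Hom$ over one group commutes with normalized parabolic induction in an independent variable ($Q_a\backslash H_m$ being compact), and $\Hom$ over a product factors as a tensor product over the factors, one gets
\[
\Hom_{\GL_k(F)\times G_{n-2k}}(J^a,\chi_V\delta^\vee\otimes\pi_0)_\infty \cong A_a\rtimes \Hom_{G_{n-2k}}(\omega_{m-2a,n-2k},\pi_0)_\infty,
\]
where $\rtimes$ is normalized induction from $Q_a$, the second tensor factor is already the representation of $H_{m-2a}$ appearing in the statement ($m-2a$ equals $m-2k$ if $a=k$ and $m-2k+2$ if $a=k-1$), and
\[
A_a = \Hom_{\GL_k(F)}\bigl(\Ind_{P_{k-a,a}}^{\GL_k(F)}(\chi_V\nu^{\lambda_{k-a}}\otimes\Sigma_a),\chi_V\delta^\vee\bigr)_\infty,
\]
viewed as a $\GL_a(F)$-representation via the second $\GL_a(F)$-action on $\Sigma_a$. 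Thus everything reduces to identifying $A_a$.

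For this, apply the Bernstein form of Frobenius reciprocity to the outer induction: $A_a\cong\Hom_{\GL_{k-a}(F)\times\GL_a(F)}(\chi_V\nu^{\lambda_{k-a}}\otimes\Sigma_a, R_{\overline P_{k-a,a}}(\chi_V\delta^\vee))_\infty$. Separating the two $\GL$-factors, the $\GL_{k-a}(F)$-factor forces some irreducible constituent of $R_{\overline P_{k-a,a}}(\delta^\vee)$ to have one-dimensional $\GL_{k-a}(F)$-part $\nu^{\lambda_{k-a}}$. Since $\delta$ is essentially square integrable, formula~\eqref{eq_mstar2} shows every such constituent is of the form $\delta([\,\cdot\,])\otimes\delta([\,\cdot\,])$, and its $\GL_{k-a}(F)$-part can be one-dimensional only when $k-a\le 1$; this is the key vanishing and it disposes of every $a\notin\{k-1,k\}$. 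If $a=k$ there is no $\GL_{k-a}$-factor and no constraint on $\delta$. If $a=k-1$, then (using $\lambda_{k-1}=(m-n+1-\epsilon)/2=(1-l)/2$ and reading off from $m^*(\delta^\vee)$ the unique graded piece whose right end is a $\GL_1$-block) the equality of the $\GL_1$-part with $\chi_V\nu^{(1-l)/2}$ forces $\delta=\text{St}_k\nu^{(l-k)/2}$; in that case the remaining $\GL_{k-1}(F)$-part works out to $\chi_V\text{St}_{k-1}\nu^{(k-l+1)/2}$, and any other essentially square integrable $\delta$ gives $A_{k-1}=0$.

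It remains to compute $\Hom_{\GL_a(F)}(\Sigma_a,\sigma)_\infty$, where $\sigma=\chi_V\delta^\vee$ when $a=k$ and $\sigma=\chi_V\text{St}_{k-1}\nu^{(k-l+1)/2}$ when $a=k-1$, the group $\GL_a(F)$ acting on $\Sigma_a=C_c^\infty(\GL_a(F))$ by $\chi_V$-twisted left translation. Here I would invoke the standard bimodule identity: for the left-regular action $\Hom_{\GL_a(F)}(C_c^\infty(\GL_a(F)),\sigma)_\infty\cong\sigma$, the residual right translation---twisted by $\chi_W$ in Theorem~\ref{thm_Kudla}---becoming the left $\GL_a(F)$-action used for the induction from $Q_a$. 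Tracking both quadratic twists, they combine to $\chi_W^{-1}$, so $A_k\cong\chi_W^{-1}\delta^\vee$ and $A_{k-1}\cong\chi_W^{-1}\text{St}_{k-1}\nu^{(k-l+1)/2}$; substituting into the displayed isomorphism gives precisely the three cases of the proposition.

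The two Frobenius reciprocities and the interchange of $\Hom$ with induction are routine; the parts requiring real care---where I expect to spend most of the effort---are (a) the matching argument on the $\GL_{k-a}(F)$-factor, since it is this step that produces the trichotomy and pins down the exact Steinberg twist and its exponent, and (b) the $\Sigma_a$-bimodule computation together with the bookkeeping of the quadratic characters $\chi_V,\chi_W$ and the half-integral shifts, where a single sign slip would replace $\nu^{(k-l+1)/2}$ by its inverse.
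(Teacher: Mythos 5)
The paper does not actually prove this proposition; it invokes it from the literature, citing Corollary~3.2 of Mui\'{c}'s paper and Proposition~5.2 of Atobe--Gan. Your proof correctly reconstructs the standard argument found in those references: feed Kudla's filtration into the $\Hom$, commute $\Hom$ past the two independent inductions, apply second Frobenius reciprocity on the $\GL_k$-side so that the character $\chi_V\nu^{\lambda_{k-a}}$ of $\GL_{k-a}(F)$ has to match a $\GL_{k-a}$-constituent of the Jacquet module of the essentially square integrable $\chi_V\delta^\vee$ (which forces $a\in\{k-1,k\}$ and, when $a=k-1$, pins $\delta$ to $\textnormal{St}_k\nu^{(l-k)/2}$), and then identify $\Hom_{\GL_a}(\Sigma_a,\,\cdot\,)_\infty$ via the regular-bimodule structure of $C_c^\infty(\GL_a(F))$, tracking the $\chi_V,\chi_W$ twists. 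The two spots you flag as delicate are exactly the ones that require care, but the structure is right and the exponents you extract agree with the statement.
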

\noindent Recall that, in the above proposition, we have $\Hom_G(\omega,\pi)_\infty = \Theta(\pi)^\vee$. Furthermore, $\text{St}_k$ denotes the so-called Steinberg representation of $\GL_k(F)$, the square integrable representation attached to the segment $[|\cdot|^{\frac{1-k}{2}},|\cdot|^{\frac{k-1}{2}}]$. Thus $\text{St}_k\nu^{\frac{l-k}{2}} = \drep{\frac{l+1}{2}-k}{\frac{l-1}{2}}$. We point out that the condition $l>0$ given in \cite[Proposition 5.2]{Atobe_Gan} is not necessary (cf.~\cite[Corollary 3.2]{Muic_theta_discrete_Israel}).

We now give two useful corollaries of Proposition \ref{prop_Muic_isotype}. The first one is Corollary 5.3 of \cite{Atobe_Gan}; see also \cite[Corollary 3.2]{Muic_theta_discrete_Israel}.
\begin{cor}
\label{cor_theta_epi}
Let $\pi\in \Irr(G_n), \pi_0 \in \Irr(G_{n-2k})$ and let $\delta$ be an irreducible essentially square integrable representation of $\GL_k(F)$. Assume that $\delta \ncong \textnormal{St}_k\nu^{\frac{l-k}{2}}$, where $l = n - m + \epsilon$. Then
\[
\chi_V\delta \rtimes \pi_0 \twoheadrightarrow \pi
\]
implies
\[
\chi_W\delta \rtimes \Theta_l(\pi_0) \twoheadrightarrow \Theta_l(\pi).
\]
\end{cor}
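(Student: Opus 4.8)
The plan is to reduce the statement to an embedding of $H_m$-representations and then extract that embedding from Kudla's filtration. Since all the theta lifts occurring here are of finite length, contragredient is exact; using also that contragredient commutes with parabolic induction for the groups in play, the desired surjection $\chi_W\delta\rtimes\Theta_l(\pi_0)\twoheadrightarrow\Theta_l(\pi)$ is equivalent to an $H_m$-equivariant embedding
\[
\Theta_l(\pi)^\vee\hookrightarrow\chi_W^{-1}\delta^\vee\rtimes\Theta_l(\pi_0)^\vee ,
\]
where by Lemma~\ref{lem_ThetaHom} one has $\Theta_l(\pi)^\vee=\Hom_{G_n}(\omega_{m,n},\pi)_\infty$ and $\Theta_l(\pi_0)^\vee=\Hom_{G_{n-2k}}(\omega_{m-2k,n-2k},\pi_0)_\infty$.

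The first step I would carry out is to convert the hypothesis from a quotient into a subobject. By the M\oe glin--Vigneras--Waldspurger involution (Lemma~\ref{lemma:MVWinv}), together with the fact that $\chi_V$ is quadratic and $\delta$ is irreducible, the surjection $\chi_V\delta\rtimes\pi_0\twoheadrightarrow\pi$ is equivalent to the embedding $\pi\hookrightarrow\chi_V\delta^\vee\rtimes\pi_0$. This move is essential, because the functor $\Hom_{G_n}(\omega_{m,n},-)_\infty=\Theta_l(-)^\vee$ is only left exact, hence sends embeddings to embeddings but need not preserve surjections. Applying it to $\pi\hookrightarrow\chi_V\delta^\vee\rtimes\pi_0$ yields an $H_m$-equivariant embedding $\Theta_l(\pi)^\vee\hookrightarrow\Hom_{G_n}(\omega_{m,n},\chi_V\delta^\vee\rtimes\pi_0)_\infty$; writing $\chi_V\delta^\vee\rtimes\pi_0=\Ind_{P_k}^{G_n}(\chi_V\delta^\vee\otimes\pi_0)$ and applying Frobenius reciprocity identifies the target with $\Hom_{\GL_k(F)\times G_{n-2k}}(R_{P_k}(\omega_{m,n}),\chi_V\delta^\vee\otimes\pi_0)_\infty$ as an $H_m$-module.

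The second step is to run the contravariant left-exact functor $\Hom_{\GL_k(F)\times G_{n-2k}}(-,\chi_V\delta^\vee\otimes\pi_0)_\infty$ against Kudla's filtration $R_{P_k}(\omega_{m,n})=R^0\supset R^1\supset\dotsb\supset R^k\supset R^{k+1}=0$ from Theorem~\ref{thm_Kudla}. Here Proposition~\ref{prop_Muic_isotype} does the work: the successive quotient $J^a$ contributes $\chi_W^{-1}\delta^\vee\rtimes\Theta_l(\pi_0)^\vee$ when $a=k$, contributes a nonzero term when $a=k-1$ only if $\delta\cong\text{St}_k\nu^{\frac{l-k}{2}}$, and contributes $0$ for all other $a$. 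Since the hypothesis rules out $\delta\cong\text{St}_k\nu^{\frac{l-k}{2}}$, we obtain $\Hom_{\GL_k(F)\times G_{n-2k}}(J^a,\chi_V\delta^\vee\otimes\pi_0)_\infty=0$ for every $a<k$. Feeding the short exact sequences $0\to R^{a+1}\to R^a\to J^a\to 0$ into the left-exact $\Hom$ then produces a chain of embeddings terminating in $\Hom_{\GL_k(F)\times G_{n-2k}}(R^k,\chi_V\delta^\vee\otimes\pi_0)_\infty$; since $R^k=J^k$, Proposition~\ref{prop_Muic_isotype} identifies the latter with $\chi_W^{-1}\delta^\vee\rtimes\Theta_l(\pi_0)^\vee$. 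Composing with the embedding from the first step yields $\Theta_l(\pi)^\vee\hookrightarrow\chi_W^{-1}\delta^\vee\rtimes\Theta_l(\pi_0)^\vee$, as required.

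I do not anticipate a genuine obstacle: granting Proposition~\ref{prop_Muic_isotype} and Kudla's filtration, the argument is a chain of adjunctions and exactness bookkeeping. The one point that must be handled with care is the vanishing of the $a=k-1$ layer of the filtration, which is exactly where the assumption $\delta\ncong\text{St}_k\nu^{\frac{l-k}{2}}$ is invoked, and which is why that Steinberg twist has to be excluded here (and treated by a separate argument elsewhere). A lesser technicality is tracking the $H_m$-action and the passage to smooth vectors $(-)_\infty$ through the Frobenius reciprocity and the filtration; this is routine in this setting.
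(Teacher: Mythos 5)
Your proof is correct and follows the same route the paper itself uses: it is the argument the paper gives in detail for the neighbouring Corollary~\ref{cor_shaving}, namely convert the quotient to a submodule via Lemma~\ref{lemma:MVWinv}, apply the left-exact $\Hom_{G_n}(\omega_{m,n},-)_\infty$, identify the target via Frobenius reciprocity as $\Hom$ against $R_{P_k}(\omega_{m,n})$, and then collapse Kudla's filtration layer by layer using Proposition~\ref{prop_Muic_isotype}, with the hypothesis $\delta\ncong\text{St}_k\nu^{\frac{l-k}{2}}$ precisely killing the $J^{k-1}$ contribution. The paper states Corollary~\ref{cor_theta_epi} with a citation rather than a self-contained proof, but the cited sources (Atobe--Gan, Mui\'c) and the paper's own proof of Corollary~\ref{cor_shaving} all implement exactly this bookkeeping.
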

The second corollary we state is a slight modification of the first: this time, we are unable to obtain information about the full lift $\Theta_l(\pi)$, but we allow the special case $\delta \cong \text{St}_k\nu^{\frac{l-k}{2}}$.
\begin{cor}
\label{cor_shaving}
Let $\delta \in \Irr(\GL_k(F))$ be an essentially square integrable representation and let $\pi\in \Irr(G_n), \pi_0 \in \Irr(G_{n-2k})$ be such that
\[
\chi_V\delta \rtimes \pi_0 \twoheadrightarrow \pi.
\]
Furthermore, let $A$ be a representation of a general linear group. Assume that an irreducible representation $\sigma$ satisfies
\[
A \rtimes \Theta_l(\pi) \twoheadrightarrow \sigma,
\]
where $l = n - m + \epsilon$. If $\delta \ncong \textnormal{St}_k\nu^{\frac{l-k}{2}}$, then
\[
\label{eq_shaving1}\tag{A}
A \times \chi_W\delta \rtimes \Theta_l(\pi_0) \twoheadrightarrow \sigma.
\]
If $\delta \cong \textnormal{St}_k\nu^{\frac{l-k}{2}} \cong \drep{\frac{l+1}{2}-k}{\frac{l-1}{2}}$, then either (A) is true, or the following holds:
\[
\label{eq_shaving2}\tag{B}
A \times \chi_W\drep{\frac{l+1}{2}-k}{\frac{l-3}{2}} \rtimes \Theta_{l-2}(\pi_0) \twoheadrightarrow \sigma.
\]
\end{cor}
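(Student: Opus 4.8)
The plan is to re-run the argument behind Corollary \ref{cor_theta_epi} one step more carefully, keeping track of the extra layer of Kudla's filtration that appears in the Steinberg case, and then to use the irreducibility of $\sigma$ to split the resulting two-term structure. One may assume $k>0$, since for $k=0$ we have $\pi=\pi_0$ and the assertion is the hypothesis.

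First I would dispose of the case $\delta\ncong\textnormal{St}_k\nu^{\frac{l-k}{2}}$: here Corollary \ref{cor_theta_epi} applies directly and gives $\chi_W\delta\rtimes\Theta_l(\pi_0)\twoheadrightarrow\Theta_l(\pi)$. Since normalized parabolic induction is exact, applying $A\rtimes(-)$ and composing with $A\rtimes\Theta_l(\pi)\twoheadrightarrow\sigma$ produces (A).

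Now assume $\delta\cong\textnormal{St}_k\nu^{\frac{l-k}{2}}$, so that Corollary \ref{cor_theta_epi} is unavailable precisely because the layer $J^{k-1}$ of Kudla's filtration also contributes. From $\chi_V\delta\rtimes\pi_0\twoheadrightarrow\pi$ and Lemma \ref{lemma:MVWinv} one gets $\pi\hookrightarrow\chi_V\delta^\vee\rtimes\pi_0$; applying the left-exact functor $\Hom_{G_n}(\omega_{m,n},-)_\infty$, then Lemma \ref{lem_ThetaHom} and Frobenius reciprocity, yields an $H_m$-equivariant embedding
\[
\Theta_l(\pi)^\vee\ \hookrightarrow\ \Hom_{M_k}\bigl(R_{P_k}(\omega_{m,n}),\ \chi_V\delta^\vee\otimes\pi_0\bigr)_\infty,\qquad M_k=\GL_k(F)\times G_{n-2k}.
\]
Then I would feed Kudla's filtration $R_{P_k}(\omega_{m,n})=R^0\supset R^1\supset\dotsb\supset R^{k+1}=0$ of Theorem \ref{thm_Kudla} into $\Hom_{M_k}(-,\chi_V\delta^\vee\otimes\pi_0)_\infty$. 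By Proposition \ref{prop_Muic_isotype}, in the present case exactly the layers $J^{k}$ and $J^{k-1}$ contribute, with
\begin{align*}
\Hom_{M_k}(J^{k},\chi_V\delta^\vee\otimes\pi_0)_\infty&=(\chi_W\delta\rtimes\Theta_l(\pi_0))^\vee,\\
\Hom_{M_k}(J^{k-1},\chi_V\delta^\vee\otimes\pi_0)_\infty&=\bigl(\chi_W\drep{\frac{l+1}{2}-k}{\frac{l-3}{2}}\rtimes\Theta_{l-2}(\pi_0)\bigr)^\vee,
\end{align*}
where one uses $\chi_W^{-1}=\chi_W$, the identities $(n-2k)+\epsilon-(m-2k)=l$ and $(n-2k)+\epsilon-(m-2k+2)=l-2$ fixing the lifting indices for the smaller pairs, and $(\textnormal{St}_{k-1}\nu^{\frac{k-l+1}{2}})^\vee=\drep{\frac{l+1}{2}-k}{\frac{l-3}{2}}$. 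Since $R^{k}=J^{k}$ is the bottom submodule of the filtration while $J^{k-1}$ lies above it, applying the contravariant functor $\Hom_{M_k}(-,\chi_V\delta^\vee\otimes\pi_0)_\infty$ and invoking its left exactness shows that $\Theta_l(\pi)^\vee$ fits in a short exact sequence $0\to N'\to\Theta_l(\pi)^\vee\to N''\to 0$ with $N'\hookrightarrow(\chi_W\drep{\frac{l+1}{2}-k}{\frac{l-3}{2}}\rtimes\Theta_{l-2}(\pi_0))^\vee$ and $N''\hookrightarrow(\chi_W\delta\rtimes\Theta_l(\pi_0))^\vee$.

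To finish, dualize the hypothesis $A\rtimes\Theta_l(\pi)\twoheadrightarrow\sigma$ to $\sigma^\vee\hookrightarrow A^\vee\rtimes\Theta_l(\pi)^\vee$ (parabolic induction commuting with contragredient). Applying the exact functor $A^\vee\rtimes(-)$ to the short exact sequence above and intersecting the image of the irreducible module $\sigma^\vee$ with the submodule $A^\vee\rtimes N'$, the two cases (intersection everything, or zero) force $\sigma^\vee$ to embed either into $A^\vee\rtimes(\chi_W\drep{\frac{l+1}{2}-k}{\frac{l-3}{2}}\rtimes\Theta_{l-2}(\pi_0))^\vee$ or into $A^\vee\rtimes(\chi_W\delta\rtimes\Theta_l(\pi_0))^\vee$. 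Dualizing back then gives (B) in the first alternative and (A) in the second. The point that needs the most care is getting the direction of the induced filtration right—which Kudla layer becomes a subobject and which a quotient once the contravariant $\Hom$ is applied—together with the routine but delicate bookkeeping of the twisting characters, the Steinberg segments, and the indices $l$ versus $l-2$; the only other thing to keep in mind is that $\Theta_l(\pi)$ and $\Theta_{l-2}(\pi_0)$ are admissible (and $A$ is of finite length, as in all applications), so that the double contragredients are harmless.
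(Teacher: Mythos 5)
Your proof is correct and takes essentially the same route as the paper: pass to the contragredient, push $\Theta_l(\pi)^\vee$ into $\Hom_{M_k}(R^{k-1},\chi_V\delta^\vee\otimes\pi_0)_\infty$ via the vanishing of the layers $J^a$ for $a\leq k-2$, apply the contravariant $\Hom$ to the exact sequence $0\to R^k\to R^{k-1}\to J^{k-1}\to 0$ (so that $\Hom(J^{k-1},\cdot)$ sits as the submodule and $\Hom(J^k,\cdot)$ receives the quotient), and then use the irreducibility of $\sigma^\vee$ to land in exactly one of the two pieces. The only cosmetic difference is that you package the dichotomy as an internal short exact sequence of $\Theta_l(\pi)^\vee$ itself rather than intersecting the image of $\sigma^\vee$ with the kernel of the connecting map inside the ambient $A^\vee\rtimes\Hom(R^{k-1},\cdot)$, but the two formulations are logically equivalent.
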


\begin{proof}
By assumption, we have $\pi \hookrightarrow \chi_V\delta^\vee \rtimes \pi_0$, so
\begin{align*}
\Theta_l(\pi)^\vee &\cong \Hom_{G_n}(\omega_{m,n},\pi)_{\infty}\\
&\hookrightarrow  \Hom_{G_n}(\omega_{m,n},\chi_V\delta^\vee \rtimes \pi_0)_{\infty}\\
&\cong \Hom_{\GL_k \times G_{n-2k}}(R_{P_k}(\omega_{m,n}),\chi_V\delta^\vee \otimes \pi_0)_{\infty}.
\end{align*}
We now use Kudla's filtration to analyze $R_{P_k}(\omega_{m,n})$. For each index $a = 0, \dotsc, k$ we have an exact sequence
\[
0 \to R^{a+1 }\to R^{a} \to J^{a} \to 0.
\]
Applying the $\Hom(\ \cdot\ , \delta^\vee \otimes \pi_0)_\infty$ functor we get
\begin{equation}
\label{eq_exact}
0 \to \Hom(J^{a},\chi_V\delta^\vee \otimes \pi_0)_{\infty} \to \Hom(R^{a}, \chi_V\delta^\vee \otimes \pi_0)_{\infty} \to \Hom(R^{a+1}, \chi_V\delta^\vee \otimes \pi_0)_{\infty}.
\end{equation}
Since we know, by Proposition \ref{prop_Muic_isotype}, that the space $\Hom(J^{a}, \chi_V\delta^\vee \otimes \pi_0)_{\infty}$ is trivial for $a = 0, \dotsc, k-2$, this leads to an inclusion
\[
\Hom_{\GL_k \times G_{n-2k}}(R_{P_k}(\omega_{m,n}),\chi_V\delta^\vee \otimes \pi_0)_{\infty} \hookrightarrow \Hom_{\GL_k \times G_{n-2k}}(R^{k-1},\chi_V\delta^\vee \otimes \pi_0)_{\infty}.
\]
In particular, we have $\Theta_l(\pi)^\vee \hookrightarrow \Hom_{\GL_k \times G_{n-2k}}(R^{k-1}, \chi_V\delta^\vee \otimes \pi_0)_{\infty}$. Inducing with $A^\vee$, we get 
\[
A^\vee \rtimes \Theta_l(\pi)^\vee \hookrightarrow A^\vee \rtimes \Hom_{\GL_k \times G_{n-2k}}(R^{k-1}, \chi_V\delta^\vee \otimes \pi_0)_{\infty}.
\]
By assumption, we have $\sigma^\vee {\hookrightarrow} A^\vee \rtimes \Theta_l(\pi)^\vee$, so there is an injective equivariant map
\[
f\colon \sigma^\vee {\hookrightarrow} A^\vee\rtimes\Hom_{\GL_k \times G_{n-2k}}(R^{k-1}, \chi_V\delta^\vee \otimes \pi_0)_{\infty}.
\]
On the other hand, we may set $a=k-1$ in \eqref{eq_exact} and induce to get
\begin{align*}
0 \to A^\vee\rtimes\Hom(J^{k-1}&, \chi_V\delta^\vee \otimes \pi_0)_{\infty}\\
&\stackrel{g}{\to} A^\vee\rtimes\Hom(R^{k-1}, \chi_V\delta^\vee \otimes \pi_0)_{\infty} \stackrel{h}{\to} A^\vee\rtimes\Hom(J^{k}, \chi_V\delta^\vee \otimes \pi_0)_{\infty}.
\end{align*}
We now consider two options:
\begin{enumerate}[(A)]
\item If $\text{Im}(f) \cap \text{Ker}(h) = 0$, then we have an injective map
\[
h\circ f\colon \sigma^\vee \hookrightarrow A^\vee\rtimes\Hom(J^{k}, \chi_V\delta^\vee \otimes \pi_0)_{\infty}.
\]
Proposition \ref{prop_Muic_isotype} describes $\Hom(J^{k}, \chi_V\delta^\vee \otimes \pi_0)_{\infty}$; by taking the contragredient we get
\[
A \rtimes \chi_W\delta \rtimes \Theta_l(\pi_0) \twoheadrightarrow \sigma.
\]
Note that $\delta \ncong  \text{St}_k\nu^{\frac{l-k}{2}}$ implies $\Hom(J^{k-1}, \chi_V\delta^\vee \otimes \pi_0)_{\infty}=0$. In that case, $ \text{Ker}(h) = \text{Im}(g) = 0$, so we always have the above result. If $\delta \cong  \text{St}_k\nu^{\frac{l-k}{2}}$, we may have $\text{Im}(f) \cap \text{Ker}(h) \neq 0$.

\item If $\text{Im}(f) \cap \text{Ker}(h) \neq 0$, then the irreducibility of $\sigma$ implies $\sigma^\vee \hookrightarrow  \text{Ker}(h)$. By the exactness of the above sequence we have $\text{Ker}(h) = \text{Im}(g)$, and since $g$ is injective, we also have $ \text{Im}(g) \cong A^\vee \rtimes \Hom(J^{k-1}, \chi_V\delta^\vee \otimes \pi_0)_{\infty}$. Thus, we can write
\[
\sigma^\vee \hookrightarrow \Hom(J^{k-1}, \chi_V\delta^\vee \otimes \pi_0)_{\infty}
\]
from which, by looking at the contragradient (and using Proposition \ref{prop_Muic_isotype}), we arrive at
\[
A \rtimes \chi_W\drep{\frac{l+1}{2}-k}{\frac{l-3}{2}} \rtimes \Theta_{l-2}(\pi_0) \twoheadrightarrow \sigma.
\]
\end{enumerate}
\end{proof}
\section{Occurrence}
In this section, we determine the first occurrence index of $\pi \in \Irr(G_n)$. We fix a pair $\{\mathcal{V}^+,\mathcal{V}^-\}$ of Witt towers. Recall that, for a fixed Witt tower $\mathcal{V} = (V_m)$, $\Theta_l(\pi)$ denotes the theta lift of $\pi$ to $V_m$, with $l = n + \epsilon - m$. To signify the first occurrence index relative to the rank of group $G_n$, we set
\[
l(\pi) = n + \epsilon - m^{\text{down}}(\pi).
\]
following \cite{Atobe_Gan} (note that this definition differs from the one in \cite{Atobe_Gan}, but they coincide when $\pi$ is tempered).
Thus $\pi$ first appears on the going-down tower when $l=l(\pi)$, whereas (by the conservation relation) $l=-l(\pi)-2$ for the first occurrence of $\pi$ on the going-up tower.

We first treat one special case pertaining to the symplectic--even orthogonal dual pair.
\begin{prop}
\label{prop_-1}
Let
\[
\chi_V\delta_r \times \chi_V\delta_{r-1} \times \dotsm \times \chi_V\delta_1 \rtimes \tau
\]
denote the standard module of $\pi$. Assume that $l(\tau) = -1$. Then $l(\pi) = -1$.
\end{prop}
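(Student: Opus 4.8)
The plan is to prove the equivalent statement $m^{\textnormal{down}}(\pi) = n + \epsilon + 1$, which is the same as $l(\pi) = n + \epsilon - m^{\textnormal{down}}(\pi) = -1$. One inequality is immediate from the conservation relation: since $m^{\textnormal{down}}(\pi) \le m^{\textnormal{up}}(\pi)$ and $m^{\textnormal{down}}(\pi) + m^{\textnormal{up}}(\pi) = 2n + 2\epsilon + 2$, we get $m^{\textnormal{down}}(\pi) \le n + \epsilon + 1$. All of the work lies in the reverse inequality, which amounts to the non-occurrence statement that $\Theta(\pi, V_m) = 0$ for every $m \le n + \epsilon - 1$, on \emph{both} towers $\mathcal{V}^{+}$ and $\mathcal{V}^{-}$ of the fixed pair; this forces $m_{\mathcal{V}^{\pm}}(\pi) \ge n + \epsilon + 1$ and hence $m^{\textnormal{down}}(\pi) \ge n + \epsilon + 1$.

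To prove non-occurrence I would fix one tower $\mathcal{V}$ and argue by contradiction: suppose $\Theta_{l_0}(\pi) \ne 0$ for some $l_0 \ge 1$ (note $l_0$ is odd, since $l$ is odd for the symplectic--even orthogonal pair). By Howe duality (Theorem~\ref{thm_Howe}) choose an irreducible quotient $\sigma$ of $\Theta_{l_0}(\pi)$. Write $\sigma_j$ for the Langlands quotient of the standard module $\chi_V\delta_j \times \dotsm \times \chi_V\delta_1 \rtimes \tau$, so that $\sigma_r = \pi$ and $\sigma_0 = \tau$; exactness of parabolic induction together with uniqueness of the Langlands quotient yields $\chi_V\delta_j \rtimes \sigma_{j-1} \twoheadrightarrow \sigma_j$ for all $j$. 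I would then apply Corollary~\ref{cor_shaving} repeatedly, peeling off $\chi_V\delta_r, \chi_V\delta_{r-1}, \dotsc, \chi_V\delta_1$ one factor at a time. This produces, for each $i = 0, 1, \dotsc, r$, a representation $A_i$ of a general linear group and an integer $l_i$ such that $A_i \rtimes \Theta_{l_i}(\sigma_{r-i}) \twoheadrightarrow \sigma$, where $A_0$ is trivial, $l_0$ is the original parameter, and $l_i \in \{l_{i-1}, l_{i-1} - 2\}$ according to whether alternative (A) or (B) of Corollary~\ref{cor_shaving} occurs at that step.

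The heart of the argument is that the invariant $l_i \ge 1$ survives all $r$ steps. Alternative (A) leaves $l_i$ unchanged. Alternative (B) can occur only when $\delta_{r-i} \cong \textnormal{St}_{t_{r-i}}\nu^{(l_i - t_{r-i})/2} = \drep{\frac{l_i+1}{2} - t_{r-i}}{\frac{l_i-1}{2}}$, which forces the right endpoint of the segment defining $\delta_{r-i}$ to equal $\frac{l_i - 1}{2}$; but $\delta_{r-i}$ is a factor of a standard module, so its central exponent $\frac{a_{r-i}+b_{r-i}}{2}$ is strictly positive, hence its right endpoint $b_{r-i}$ is a positive integer (an integer because $l_i$ is odd), so $b_{r-i} \ge 1$, giving $l_i = 2b_{r-i}+1 \ge 3$ and $l_{i+1} = l_i - 2 \ge 1$. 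Thus after $r$ steps we reach $A_r \rtimes \Theta_{l_r}(\tau) \twoheadrightarrow \sigma$ with $l_r \ge 1$; since $\sigma \ne 0$ and parabolic induction is faithful, $\Theta_{l_r}(\tau) \ne 0$, i.e.\ $\tau$ occurs on $\mathcal{V}$ at dimension $n_0 + \epsilon - l_r \le n_0 + \epsilon - 1$, where $G(W_{n_0})$ is the group carrying $\tau$.

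This contradicts $l(\tau) = -1$: that hypothesis gives $m^{\textnormal{down}}(\tau) = n_0 + \epsilon + 1$, and then the conservation relation for $\tau$ forces $m^{\textnormal{up}}(\tau) = n_0 + \epsilon + 1$ as well, so $\tau$ fails to occur below dimension $n_0 + \epsilon + 1$ on \emph{either} tower of the pair — the same pair of target towers that serves $\pi$. Hence no $l_0 \ge 1$ with $\Theta_{l_0}(\pi) \ne 0$ exists on $\mathcal{V}$; running the identical argument on the other tower disposes of both cases, and we conclude $m^{\textnormal{down}}(\pi) = n + \epsilon + 1$, i.e.\ $l(\pi) = -1$. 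I expect the main obstacle to be organizational rather than conceptual: one must check at each stage that $\sigma_{r-i-1}$ really is the Langlands quotient of a genuine standard module (so Corollary~\ref{cor_shaving} applies and the surjection $\chi_V\delta_{r-i}\rtimes\sigma_{r-i-1}\twoheadrightarrow\sigma_{r-i}$ is available), that the accumulating $\textnormal{GL}$-factor $A_i$ is harmless, and — crucially — that alternative (B) can strike only when the current parameter is already $\ge 3$. The positivity in the standard-module inequality $a_r+b_r \ge \dotsm \ge a_1+b_1 > 0$, together with the fixed parity of $l$ in this dual pair, is precisely what keeps the invariant alive.
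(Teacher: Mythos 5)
Your argument is correct, and the key observation is the same as the paper's: the standard-module inequality $a_i+b_i>0$, together with the odd parity of $l$, guarantees that every $\delta_i$ with an integer right endpoint satisfies $b_i\geq 1$, which is exactly what rules out (or controls) the exceptional alternative in Kudla's filtration. Where you diverge from the paper is in organization. The paper first invokes the tower property (Proposition~\ref{prop_towers}) and the conservation relation to reduce the entire statement to showing $\Theta_1(\pi)=0$; once $l$ is pinned to $1$, the alternative $\delta\cong\textnormal{St}_k\nu^{\frac{l-k}{2}}$ would require a segment ending in $\frac{l-1}{2}=0$, which never happens, so Corollary~\ref{cor_theta_epi} applies unconditionally and the proof is a two-line chain of epimorphisms terminating in $\Theta_1(\tau)=0$. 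You instead allow arbitrary $l_0\geq 1$, replace Corollary~\ref{cor_theta_epi} with the weaker Corollary~\ref{cor_shaving}, and run a descent argument in which the parameter $l_i$ is tracked through $r$ applications; your case analysis showing that alternative (B) can fire only when $l_i\geq 3$ is correct, but it is work that the tower-property reduction renders unnecessary. Your route does have the minor advantage of not needing to know which tower is going-down in advance (you handle both symmetrically), whereas the paper is a bit terse on that point; but the price is carrying the whole Corollary~\ref{cor_shaving} machinery through a loop, and having to verify the surjection $\chi_V\delta_j\rtimes\sigma_{j-1}\twoheadrightarrow\sigma_j$ (a correct but not entirely trivial multiplicity-one argument) at each stage. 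If you notice that it suffices to take $l_0=1$, your proof collapses to the paper's.
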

\begin{proof}
It is enough to prove that $\Theta_1(\pi) = 0$ on the going-down tower. Repeatedly applying Corollary \ref{cor_theta_epi} to the epimorphism
\[
\chi_V\delta_r \times \chi_V\delta_{r-1} \times \dotsm \times \chi_V\delta_1 \rtimes \tau \twoheadrightarrow \pi
\]
we arrive at
\[
\chi_W\delta_r \times \chi_W\delta_{r-1} \times \dotsm \times \chi_W\delta_1 \rtimes \Theta_1(\tau) \twoheadrightarrow \Theta_1(\pi).
\]
Notice that none of the segments which define $\delta_i$ end in $\frac{l-1}{2} = 0$, so that condition $\delta \ncong \textnormal{St}_k\nu^{\frac{l-k}{2}}$ of Corollary \ref{cor_theta_epi} is fulfilled. Since $l(\tau)=-1$, we have $\Theta_1(\tau)=0$, so the above epimorphism implies $\Theta_1(\pi) = 0$.
\end{proof}
The following two lemmas will be used in the main proof. They also provide a paradigm of how $l(\pi)$ can be greater than $l(\tau)$, something that cannot happen if, say, $\pi$ is generic (cf.~\cite{Bakic_Hanzer_generic}).
\begin{lem}
\label{lemma_reverse_temp_thin}
Let $\tau \in \Irr(G_n)$ be a tempered representation with $l(\tau) = l \geq 0$. Let $a,b \in \mathbb{Z}$ be such that $b > l \geq a \geq 0$ and $a,b \equiv l \pmod 2$ and let
\[
\pi = L(\chi_V|\cdot|^{\frac{b-1}{2}},\chi_V|\cdot|^{\frac{b-3}{2}},\dotsc,\chi_V|\cdot|^{\frac{a+1}{2}}; \tau).
\]
Then $l(\pi) = b$.
\end{lem}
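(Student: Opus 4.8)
The plan is to prove the two inequalities $l(\pi) \leq b$ and $l(\pi) \geq b$ separately. For the upper bound, I would show that $\Theta_l(\pi) = 0$ for all target dimensions with $l > b$ on the going-down tower; equivalently, it suffices to handle $l = b + 2$ (and, in the case $b \equiv l(\tau)$, any larger $l$ of the same parity follows by the tower property). To this end I would start from the epimorphism $\chi_V|\cdot|^{\frac{b-1}{2}} \times \chi_V|\cdot|^{\frac{b-3}{2}} \times \dotsm \times \chi_V|\cdot|^{\frac{a+1}{2}} \rtimes \tau \twoheadrightarrow \pi$ and apply Corollary \ref{cor_theta_epi} repeatedly to each rank-one factor $\chi_V|\cdot|^{\frac{j}{2}}$. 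For $l = b+2$ we have $\frac{l-1}{2} = \frac{b+1}{2}$, and since each factor is $\drep{j/2}{j/2}$ with $j \leq b-1 < b+1$, none of them equals $\textnormal{St}_1 \nu^{\frac{l-1}{2}} = \drep{(b+1)/2}{(b+1)/2}$, so the hypothesis of Corollary \ref{cor_theta_epi} is met at each step. This yields $\chi_W|\cdot|^{\frac{b-1}{2}} \times \dotsm \times \chi_W|\cdot|^{\frac{a+1}{2}} \rtimes \Theta_{b+2}(\tau) \twoheadrightarrow \Theta_{b+2}(\pi)$. Since $b + 2 > l(\tau)$ (because $b > l = l(\tau)$ and $b \equiv l(\tau) \pmod 2$ forces $b \geq l(\tau) + 2$, so actually $b + 2 \geq l(\tau) + 4$; in any case $b+2 > l(\tau)$), we have $\Theta_{b+2}(\tau) = 0$ and hence $\Theta_{b+2}(\pi) = 0$, giving $l(\pi) \leq b$.

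For the lower bound $l(\pi) \geq b$, I would show $\Theta_b(\pi) \neq 0$. Here the idea is that $\pi$, although it occurs at exactly $l = b$, should be seen as sitting ``below'' $\tau$: the character twist $\chi_V|\cdot|^{(b-1)/2}$ is precisely the one that interacts with the Steinberg special case in Kudla's filtration. The natural approach is to use Corollary \ref{cor_shaving} with $l = b$, $\delta = \chi_V|\cdot|^{\frac{b-1}{2}} = \textnormal{St}_1\nu^{\frac{l-1}{2}}$ (the case $k=1$), and $\pi_0$ the Langlands quotient $L(\chi_V|\cdot|^{\frac{b-3}{2}},\dotsc,\chi_V|\cdot|^{\frac{a+1}{2}};\tau)$ of one rank lower, together with $A$ the remaining $\GL$-part. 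A cleaner route, however, is to run an induction on $b - a$ (the number of rank-one factors): peeling the outermost factor $\chi_V|\cdot|^{\frac{b-1}{2}}$ and invoking the conservation relation to control first occurrence. The base case $a = b$ is empty, $\pi = \tau$, and $l(\tau) = l$; but we need $b > l$, so the genuine base case is $b = l + 2$, $a = l$ (or $a = l$, $b = l+2$), i.e.\ $\pi = L(\chi_V|\cdot|^{\frac{l+1}{2}};\tau)$, and one shows directly via Proposition \ref{prop_Muic_isotype} (the $a = k-1$ branch with $k=1$, $\delta = \textnormal{St}_1\nu^{\frac{l-1}{2}}$) that $\Theta_{l+2}(\pi)$ contains $\Theta_l(\tau) \neq 0$, forcing $l(\pi) \geq l+2 = b$.

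The main obstacle is the non-vanishing argument: showing $\Theta_b(\pi) \neq 0$ rather than just getting an upper bound from an epimorphism. Epimorphisms of the form $A \rtimes \Theta(\pi_0) \twoheadrightarrow \Theta(\pi)$ only give vanishing conclusions, so for the lower bound one must extract the non-vanishing from the $a = k-1$ term of Kudla's filtration (Proposition \ref{prop_Muic_isotype}), which is exactly the term that is ``invisible'' to Corollary \ref{cor_theta_epi}. Concretely, I would argue that from $\pi \hookrightarrow \chi_V\delta_{\mathrm{GL}}^\vee \rtimes \tau$ (the subrepresentation form, via Lemma \ref{lemma:MVWinv}), applying $\Hom_{G_n}(\omega_{m,n}, -)_\infty$ and Kudla's filtration, the $J^{k-1}$-contribution with the Steinberg twist produces a nonzero map into something built from $\Theta_{l}(\tau)^\vee$ with $l = l(\tau)$, which is nonzero by hypothesis. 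The delicate point is to ensure this contribution is not killed when passing to $\Theta_b(\pi)^\vee$ as a subrepresentation, i.e.\ that the relevant intersection-with-kernel in the exact sequence of Corollary \ref{cor_shaving} does \emph{not} vanish — which is why the Steinberg special case is forced on us. I expect this is handled by the same bookkeeping as in Corollary \ref{cor_shaving}, combined with the conservation relation to rule out the going-down tower occurring any later; so the real content is an inductive unwinding of that corollary, tracking the ``$(B)$'' alternative at the outermost step and the ``$(A)$'' alternative (with Corollary \ref{cor_theta_epi}) at all the others.
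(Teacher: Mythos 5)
Your vanishing direction ($\Theta_{b+2}(\pi)=0$, giving $l(\pi)\leq b$) is exactly the paper's argument: repeated application of Corollary \ref{cor_theta_epi}, noting that none of the exponents equals $\frac{b+1}{2}$, reduces to $\Theta_{b+2}(\tau)=0$. That part is fine.

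The non-vanishing direction, however, has a genuine gap, and the paper takes an entirely different route. What the Kudla-filtration computation you sketch actually yields (in the base case $k=1$, Steinberg) is a nonzero element of $\Hom_{G_n}(\omega,\chi_V\delta^\vee\rtimes\tau)$, via the top piece $J^{0}$ whose isotypic component is $\Theta_{l(\tau)}(\tau)^\vee\neq 0$. But $\pi$ is a \emph{subrepresentation} of $\chi_V\delta^\vee\rtimes\tau$, and a nonzero map $\omega\to\chi_V\delta^\vee\rtimes\tau$ does not supply a map $\omega\to\pi$; the image of that map contains $\pi$ in its socle, which is the wrong side for concluding $\Hom_{G_n}(\omega,\pi)\neq 0$, i.e. $\Theta_b(\pi)\neq 0$. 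You flag this as ``the delicate point,'' but the tools you propose to resolve it cannot: Corollary \ref{cor_shaving} is purely a vanishing/constraint statement (it assumes $\Theta_l(\pi)$ already has a quotient and then restricts its shape), and the conservation-relation route of lifting to the going-up tower and showing $\Theta_{-b}(\pi)=0$ there fails because, since $b\geq l(\tau)+2$, we have $-b\leq -2-l(\tau)=l^{\mathrm{up}}(\tau)$, so $\Theta_{-b}(\tau)\neq 0$ on the going-up tower and Corollary \ref{cor_theta_epi} yields no vanishing conclusion. (Contrast this with Lemma \ref{lemma_fat_even}, where the exponents stay $\leq l$ and this conservation trick does work — that is precisely the boundary where your strategy breaks down.)

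The paper's proof of $\Theta_b(\pi)\neq 0$ sidesteps all of this by invoking the explicit description of lifts of tempered representations: since $a\leq l(\tau)$ we have $\sigma:=\theta_a(\tau)\neq 0$, and Proposition 5.6 of Atobe–Gan identifies $\theta_{-b}(\sigma)=\pi$ exactly, whence $\theta_b(\pi)=\sigma\neq 0$. Without such an input (explicit lifts of tempered representations), an elementary argument from Kudla's filtration for the lower bound would require substantially more — essentially a re-derivation of a piece of Atobe–Gan's Proposition 5.6 — and the sketch you give does not supply it.
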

\begin{proof}
We show that $\theta_b(\tau) \neq 0$ and $\theta_{b+2}(\tau) = 0$ on the going-down tower.

\noindent We have $l(\tau) = l$ and $a \leq l$, so $\Theta_a(\tau) \neq 0$. We thus have $\tau = \theta_{-a}(\theta_a(\tau))$. Now Proposition 5.6 of \cite{Atobe_Gan} shows that $\pi = \theta_{-b}(\theta_a(\tau))$. In particular, we have $\theta_b(\pi) = \theta_a(\tau) \neq 0$.

To prove $\theta_{b+2}(\pi)$ we repeatedly apply Corollary \ref{cor_theta_epi} to
\[
\chi_V|\cdot|^{\frac{b-1}{2}} \times \chi_V|\cdot|^{\frac{b-3}{2}} \times \dotsm \times \chi_V|\cdot|^{\frac{a+1}{2}} \rtimes \tau \twoheadrightarrow \pi.
\]
Since no exponent is equal to $\frac{b+1}{2}$, condition $\delta \ncong \textnormal{St}_k\nu^{\frac{l-k}{2}}$ of Corollary \ref{cor_theta_epi} is fulfilled. We get
\[
\chi_W|\cdot|^{\frac{b-1}{2}} \times \chi_W|\cdot|^{\frac{b-3}{2}} \times \dotsm \times \chi_W|\cdot|^{\frac{a+1}{2}} \rtimes \theta_{b+2}(\tau) \twoheadrightarrow \theta_{b+2}(\pi).
\]
Since $\theta_{b+2}(\tau) = 0$, this implies $\theta_{b+2}(\pi) = 0$.
\end{proof}

\begin{lem}
\label{lemma_reverse_temp_thick}
Let $\tau \in \Irr(G_n)$ be a tempered representation with $l(\tau) = l > 0$. Let $a,b \in \mathbb{Z}$ be such that $b > l \geq a \gneqq 0$ and $a,b \equiv l \pmod 2$ and let
\[
\pi = L(\chi_V|\cdot|^{\frac{b-1}{2}},\chi_V|\cdot|^{\frac{b-3}{2}},\dotsc,\chi_V|\cdot|^{\frac{a+3}{2}},\chi_VSt_{a+1}\nu^{\frac{1}{2}}; \tau).
\]
Additionally, assume that $m_{\phi_\tau}(\chi_VS_a)$ is odd. Then $l(\pi) = b$.
\end{lem}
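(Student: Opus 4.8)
The plan is to mimic the proof of Lemma \ref{lemma_reverse_temp_thin}, showing $\theta_b(\pi)\neq 0$ and $\theta_{b+2}(\pi)=0$ on the going-down tower, but now the presence of the Steinberg block $\chi_V\mathrm{St}_{a+1}\nu^{1/2} = \chi_V\drep{1-a}{2}{\frac{a+1}{2}}$ forces us to be more careful at the level $l = a$, because this is precisely the segment $\mathrm{St}_{a+1}\nu^{\frac{l-(a+1)}{2}}$ excluded in Corollary \ref{cor_theta_epi}. The additional hypothesis that $m_{\phi_\tau}(\chi_V S_a)$ is odd is what controls this critical level: it guarantees that this Steinberg factor lifts correctly under theta.

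First I would prove $\theta_b(\pi)\neq 0$. Since $l(\tau)=l$ and $a\leq l$, we have $\Theta_a(\tau)\neq 0$, so $\tau = \theta_{-a}(\theta_a(\tau))$. The key input is the description of lifts of tempered representations from \cite{Atobe_Gan}: the oddness of $m_{\phi_\tau}(\chi_V S_a)$ implies (via the explicit recipe for $\phi_{\theta_a(\tau)}$ — roughly, the multiplicity of $\chi_V S_a$ drops, making $\theta_a(\tau)$ behave like a representation whose going-down lift to the next level involves a Steinberg-type extension) that $\theta_a(\tau)$, when further lifted down through levels $a+2, a+4, \dots, b$, produces exactly the Langlands quotient $\pi$. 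Concretely, I expect to invoke Proposition 5.6 of \cite{Atobe_Gan} (or its analogue handling the Steinberg contribution at the first step $a\to a+2$, where the block $\chi_V\mathrm{St}_{a+1}\nu^{1/2}$ appears, followed by the ``thin'' GL$_1$-blocks $\chi_V|\cdot|^{\frac{a+3}{2}},\dots,\chi_V|\cdot|^{\frac{b-1}{2}}$ added at the remaining levels). This yields $\pi = \theta_{-b}(\theta_a(\tau))$, hence $\theta_b(\pi) = \theta_a(\tau)\neq 0$.

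For the vanishing $\theta_{b+2}(\pi)=0$, I would apply Corollary \ref{cor_theta_epi} repeatedly to the epimorphism
\[
\chi_V|\cdot|^{\frac{b-1}{2}} \times \dotsm \times \chi_V|\cdot|^{\frac{a+3}{2}} \times \chi_V\mathrm{St}_{a+1}\nu^{\frac{1}{2}} \rtimes \tau \twoheadrightarrow \pi.
\]
At level $l=b+2$ we have $\frac{l-1}{2} = \frac{b+1}{2}$, and one checks that none of the GL$_1$-segments $|\cdot|^{\frac{a+3}{2}},\dots,|\cdot|^{\frac{b-1}{2}}$ equals $|\cdot|^{\frac{b+1}{2}}$, while the Steinberg factor $\mathrm{St}_{a+1}\nu^{\frac{1}{2}} = \drep{1-a}{2}{\frac{a+1}{2}}$ ends in $\frac{a+1}{2}\neq\frac{b+1}{2}$ (since $b>a$), so it is not of the excluded form $\mathrm{St}_{a+1}\nu^{\frac{(b+2)-(a+1)}{2}}$ either. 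Hence Corollary \ref{cor_theta_epi} applies at every step and gives
\[
\chi_W|\cdot|^{\frac{b-1}{2}} \times \dotsm \times \chi_W|\cdot|^{\frac{a+3}{2}} \times \chi_W\mathrm{St}_{a+1}\nu^{\frac{1}{2}} \rtimes \theta_{b+2}(\tau) \twoheadrightarrow \theta_{b+2}(\pi).
\]
Since $b+2 > l = l(\tau)$ we have $\theta_{b+2}(\tau)=0$, so $\theta_{b+2}(\pi)=0$, and therefore $l(\pi)=b$.

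The main obstacle, and the part requiring the parity hypothesis, is the first claim $\theta_b(\pi)\neq 0$: one must verify that pushing $\theta_a(\tau)$ down the going-down tower really produces $\pi$ with the Steinberg block intact, rather than some other Langlands quotient (or zero of the $L$-parameter bookkeeping). This is exactly where the condition $m_{\phi_\tau}(\chi_V S_a)$ odd enters — it is the numerical condition from the Atobe–Gan recipe under which the first going-down step contributes a $\chi_V\mathrm{St}_{a+1}\nu^{1/2}$ factor rather than merely a $\chi_V|\cdot|^{\frac{a+1}{2}}$ factor. Making this precise will require carefully tracking the $L$-parameter of $\theta_a(\tau)$ and applying the tempered-lift formulas of \cite{Atobe_Gan}; everything else is a routine iteration of Corollary \ref{cor_theta_epi} together with the conservation relation to pass between the going-down and going-up statements.
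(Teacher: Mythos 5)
The vanishing step ($\theta_{b+2}(\pi)=0$) is correct; the paper handles it exactly as in Lemma \ref{lemma_reverse_temp_thin}, as you do. The occurrence step, however, contains a genuine gap. You claim $\pi = \theta_{-b}(\theta_a(\tau))$ and hence $\theta_b(\pi) = \theta_a(\tau)\neq 0$, but this is impossible already at the level of ranks. Since $\tau \in \Irr(G_n)$, one has $\theta_a(\tau) \in \Irr(H(V_{n+\epsilon-a}))$, whereas the $\GL$-block in the standard module of $\pi$ has total rank $\tfrac{b-a-2}{2}+(a+1)=\tfrac{a+b}{2}$, so $\pi \in \Irr(G_{n+a+b})$ and $\theta_b(\pi)$, if nonzero, is a representation of $H(V_{n+\epsilon+a})$ --- a strictly larger group than the one on which $\theta_a(\tau)$ lives, because $a>0$. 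In the thin Lemma \ref{lemma_reverse_temp_thin} the bottom $\GL$-factor has rank $1$ rather than $a+1$, so there $\pi \in \Irr(G_{n+b-a})$ and the ranks do line up; that is exactly why $\theta_b(\pi)=\theta_a(\tau)$ works in the thin case and cannot work here. No variant of Proposition 5.6 of \cite{Atobe_Gan} will repair this, since you would be equating representations of different groups.

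The paper instead constructs a different tempered representation $T$ of the correct group $H(V_{n+\epsilon+a})$. When $m_{\phi_\tau}(\chi_VS_a)=1$, the lift $\theta_a(\tau)$ does not contain $\chi_WS_a$ in its parameter, so $\chi_W\textnormal{St}_a\rtimes\theta_a(\tau)$ is completely reducible of length two; $T$ is defined to be the constituent characterized by $\eta_T(\chi_WS_a)=-\eta_T(\chi_WS_{a-2})$, namely the one that is \emph{not} $\theta_a(\chi_V\textnormal{St}_a\rtimes\tau)$. One then checks $l(T)=a$ and invokes Theorem 4.5(2),(3) of \cite{Atobe_Gan} --- not Proposition 5.6 --- to obtain $\theta_{-b}(T)=\pi$, hence $\theta_b(\pi)=T\neq 0$. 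When $m_{\phi_\tau}(\chi_VS_a)=2h+1>1$ the argument reduces to this base case by writing $\tau\cong\chi_V(\textnormal{St}_a,h)\rtimes\tau_0$, constructing the analogous $T_0$ for $\tau_0$, and then inducing $h-1$ copies of $\textnormal{St}_a$ onto $T_0$. The parity hypothesis therefore governs the existence and selection of $T$; it does not arrange for $\theta_a(\tau)$ to ``pick up'' a Steinberg block under further lifting, which is the mechanism your outline presupposes.
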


Before the proof, we point out two details. First, in this lemma, we assume $l > 0$. Otherwise, $l=0$ and $l \geq a \geq 0$ would force $a=0$. In that case, $St_{a+1}\nu^{\frac{1}{2}} = |\cdot|^\frac{a+1}{2}$, which is already treated in Lemma \ref{lemma_reverse_temp_thin}. Secondly, note that since $\Theta_{l}(\tau) \neq 0$ and $a \leq l$, Theorem 4.1 of \cite{Atobe_Gan} shows that the condition imposed on $m_{\phi_\tau}(\chi_VS_a)$ is almost always valid. The only situation in which $m_{\phi_\tau}(\chi_VS_a)$ can be even is if $a=l$; this is treated in Lemma \ref{lemma_fat_even}
\begin{proof}[Proof of Lemma \ref{lemma_reverse_temp_thick}]
First assume that $m_{\phi_\tau}(\chi_VS_a)=1$, so that $\theta_a(\tau)$ does not have $\chi_WS_a$ in its parameter. Then, $\chi_W\delta({\frac{1-a}{2}}, {\frac{a-1}{2}})\rtimes \theta_a(\tau)$ is completely reducible, of length two; let $T$ be one of its two subquotients, the one for which $\eta_{T}(\chi_WS_a) =-\eta_{T}(\chi_WS_{a-2})$(the other subquotient is $\theta_a(\chi_VSt_a\rtimes \tau)$). Then, $l(T)=a$ and $\theta_{-b}(T)= \pi$ by \cite{Atobe_Gan}, Theorem 4.5 (2),(3).

If $m_{\phi}(\chi_VS_a)=2h+1$, then $\tau\cong \chi_V(St_a,h)\rtimes \tau_0$, where now $\tau_0$ satisfies properties from the first part of the proof and $(St_a,h)$ denotes the product of $h$ factors $St_a$. Let $T_0$ be the tempered representation associated to $\tau_0$ analogous to the representation $T$ from the first part of the proof.
Then, the representation $T_1=\chi_V(St_t,h-1)\rtimes T_0$ is irreducible and $l(T_1)=a$ since $l(T_0)=a$. Again using \cite{Atobe_Gan}, Theorem 4.5 (3), since the multiplicity of $S_a$ in $\phi_{T_1}$ is even, we get that $\theta_{-b}(T_1)=\pi$.

This shows $\theta_b(\pi) \neq 0$. The proof that $\theta_{b+2}(\pi) = 0$ is the same as in Lemma \ref{lemma_reverse_temp_thin}.
\end{proof}
Before we state our main result, we need to cover another special case.
\begin{lem}
\label{lemma_fat_even}
Let $\tau \in \Irr(G_n)$ be a tempered representation with $l(\tau) = l > 0$. Furthermore, assume that $m_{\phi_\tau}(\chi_VS_l)$ is even.
Let
\[
\pi = L(\chi_VSt_{l+1}\nu^\frac{1}{2}, \chi_VSt_{l+1}\nu^\frac{1}{2}, \dotsc, \chi_VSt_{l+1}\nu^\frac{1}{2}; \tau).
\]
Then $l(\pi) = l(\tau) = l$.
\end{lem}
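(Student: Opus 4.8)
We argue by induction on the number $h\geq 0$ of copies of $\chi_V\mathrm{St}_{l+1}\nu^{\frac12}$ occurring in the standard module of $\pi$; the case $h=0$ (where $\pi=\tau$) is the hypothesis. Write $\pi_0$ for the analogous Langlands quotient built from $h-1$ copies (so $\pi_0=\tau$ when $h=1$), which by the inductive hypothesis satisfies $l(\pi_0)=l$, and write $n_\tau$ for the rank parameter of the group carrying $\tau$, so that $n=n_\tau+2h(l+1)$. As usual it suffices to establish, on the going-down tower, both $\Theta_l(\pi)\neq 0$ and $\Theta_{l+2}(\pi)=0$; together with the tower property and the parity of $l$ these force $l(\pi)=l$.

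For $\Theta_l(\pi)\neq 0$ I would realise $\pi$ as a theta lift from the orthogonal side. Since $l(\tau)=l$, the lift $\theta_l(\tau)$ is a nonzero tempered representation, so $X:=L(\chi_W\mathrm{St}_{l+1}\nu^{\frac12},\dots,\chi_W\mathrm{St}_{l+1}\nu^{\frac12};\theta_l(\tau))$ ($h$ copies) is a well-defined irreducible representation of $H(V_{n+\epsilon-l})$, and by Lemma~\ref{lemma:MVWinv} we have $(\chi_W\mathrm{St}_{l+1}\nu^{\frac12})^{\times h}\rtimes\theta_l(\tau)\twoheadrightarrow X$. Because $\mathrm{St}_{l+1}\nu^{\frac12}$ is never the distinguished representation occurring in Corollary~\ref{cor_theta_epi} at the levels $\pm l$ (the obstruction would force $l\in\{0,-1\}$, and $l>0$), applying that corollary $h$ times from the orthogonal side gives an epimorphism from $(\chi_V\mathrm{St}_{l+1}\nu^{\frac12})^{\times h}\rtimes\Theta(\theta_l(\tau),W_{n_\tau})$ onto the lift of $X$ back to $W_n$. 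Since $\theta_l(\tau)=\theta(\tau,V_{n_\tau+\epsilon-l})$ realises the going-down first occurrence of $\tau$, reflexivity of first occurrence gives $\Theta(\theta_l(\tau),W_{n_\tau})=\tau$, so the source above is precisely the standard module of $\pi$; hence its unique irreducible quotient, $\theta(X,W_n)$, equals $\pi$. Thus $\pi$ is a theta lift from $H(V_{n+\epsilon-l})$, so $\Theta_l(\pi)\neq 0$.

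The harder half is $\Theta_{l+2}(\pi)=0$, and this is where the parity of $m_{\phi_\tau}(\chi_VS_l)$ is essential. Assume for contradiction $\Theta_{l+2}(\pi)\neq 0$ and let $\sigma$ be an irreducible quotient. At level $l+2$ one checks $\chi_V\mathrm{St}_{l+1}\nu^{\frac12}=\chi_V\mathrm{St}_{l+1}\nu^{\frac{(l+2)-(l+1)}{2}}$, i.e.\ this is exactly the distinguished representation of Corollary~\ref{cor_shaving}; applying that corollary to $\chi_V\mathrm{St}_{l+1}\nu^{\frac12}\rtimes\pi_0\twoheadrightarrow\pi$ (with trivial $A$) leaves two options: (A) $\chi_W\mathrm{St}_{l+1}\nu^{\frac12}\rtimes\Theta_{l+2}(\pi_0)\twoheadrightarrow\sigma$, or (B) $\chi_W\mathrm{St}_l\rtimes\Theta_l(\pi_0)\twoheadrightarrow\sigma$. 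Option (A) is impossible since $l(\pi_0)=l$ gives $\Theta_{l+2}(\pi_0)=0$; so (B) holds. Now $\Theta_l(\pi_0)$ is irreducible (full lift at the going-down first occurrence) and, by the first part applied to $\pi_0$, equals $\theta_l(\pi_0)=L(\chi_W\mathrm{St}_{l+1}\nu^{\frac12},\dots;\theta_l(\tau))$ ($h-1$ copies). Peeling $\chi_W\mathrm{St}_l$ and then the $h-1$ copies of $\chi_W\mathrm{St}_{l+1}\nu^{\frac12}$ off $\sigma$ by Corollary~\ref{cor_theta_epi} (none is distinguished at level $-l-2$, since $l>0$) lifts $\sigma$ to $W_n$ and exhibits it as a quotient of $\chi_V\mathrm{St}_l\times(\chi_V\mathrm{St}_{l+1}\nu^{\frac12})^{\times(h-1)}\rtimes\Theta_{-l-2}(\theta_l(\tau))$. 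By Lemma~\ref{lemma_reverse_temp_thin}, applied to the tempered $\tau$ with $a=l$, $b=l+2$, the base factor $\Theta_{-l-2}(\theta_l(\tau))$ has unique irreducible quotient $L(\chi_V|\cdot|^{\frac{l+1}{2}};\tau)$. Hence $\pi=\theta(\sigma,W_n)$ must be a subquotient of $\chi_V\mathrm{St}_l\times(\chi_V\mathrm{St}_{l+1}\nu^{\frac12})^{\times(h-1)}\times\chi_V|\cdot|^{\frac{l+1}{2}}\rtimes\tau$, and one rules this out by the linkedness bookkeeping of Section~\ref{subs_aux} together with \cite{Atobe_Gan}, Theorem~4.5: the even multiplicity of $\chi_VS_l$ in $\phi_\tau$ (equivalently the odd multiplicity of $\chi_VS_l$ in $\phi_{\theta_l(\tau)}$) is precisely what prevents $\mathrm{St}_l$ and $|\cdot|^{\frac{l+1}{2}}$ from recombining into $\mathrm{St}_{l+1}\nu^{\frac12}$ in the position forced by the standard module of $\pi$, which is $h$ copies of $\chi_V\mathrm{St}_{l+1}\nu^{\frac12}$ over $\tau$.

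I expect the main obstacle to be precisely the closing of case (B). Everything else is a fairly routine orchestration of the reduction tools (Lemma~\ref{lemma:MVWinv}, Corollaries~\ref{cor_theta_epi} and~\ref{cor_shaving}, Lemma~\ref{lemma_reverse_temp_thin}) together with the induction; but in case (B) one must keep careful track of full theta lifts (not merely small lifts) along the chain of reductions, and it is only there that the parity hypothesis is used in an indispensable way — in the odd-multiplicity case the inductive input $l(\pi_0)=l$ fails (indeed $l(\pi_0)=l+2$ by Lemma~\ref{lemma_reverse_temp_thick}), option (A) becomes available, and one genuinely gets $\Theta_{l+2}(\pi)\neq 0$.
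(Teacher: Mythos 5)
Your argument for $\Theta_l(\pi)\neq 0$ is valid, though the paper does it more directly: from $\chi_V(\mathrm{St}_{l+1}\nu^{1/2},k)\rtimes\tau\twoheadrightarrow\pi$, Corollary~\ref{cor_theta_epi} applied towards the \emph{going-up} tower yields $\chi_W(\mathrm{St}_{l+1}\nu^{1/2},k)\rtimes\Theta_{-l}(\tau)\twoheadrightarrow\Theta_{-l}(\pi)$, and $\Theta_{-l}(\tau)=0$ there forces $\Theta_{-l}(\pi)=0$; conservation then gives $\Theta_l(\pi)\neq 0$ on the going-down tower. This avoids invoking any $\Theta(\theta_l(\tau),W_{n_\tau})=\tau$ reflexivity for the lift back across the correspondence.

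The vanishing $\Theta_{l+2}(\pi)=0$ is where your proposal has a genuine gap, and you correctly flagged the spot. Your induction on the number $h$ of copies of $\chi_V\mathrm{St}_{l+1}\nu^{1/2}$ runs Corollary~\ref{cor_shaving} once at the top factor, and in case~(B) you are stuck with $\chi_W\mathrm{St}_l\rtimes\Theta_l(\pi_0)$, where $\Theta_l(\pi_0)\neq 0$. Closing this requires information you do not have: (a) irreducibility of the full lift $\Theta_l(\pi_0)$ at first occurrence is used for a non-tempered Langlands quotient $\pi_0$, and no such result is available from the references the paper relies on (Mui\'c's description covers discrete series, and \cite{Atobe_Gan} covers small lifts of tempered representations); (b) you cite Lemma~\ref{lemma_reverse_temp_thin} for the assertion that $\Theta_{-l-2}(\theta_l(\tau))$ has unique irreducible quotient $L(\chi_V|\cdot|^{(l+1)/2};\tau)$, but that lemma is a first-occurrence statement and says nothing about the constituents of a full lift of a tempered (not square-integrable) representation; (c) your concluding appeal to ``linkedness bookkeeping'' is not a proof: $\mathrm{St}_l$ and $|\cdot|^{(l+1)/2}$ are adjacent and $\mathrm{St}_{l+1}\nu^{1/2}$ \emph{is} a constituent of their product, so on the face of it the recombination into the standard module of $\pi$ is possible; one would need a substantial additional argument (multiplicity tracking, or a Jacquet-module computation pinning the tempered core) to rule it out, and this is not sketched.

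The paper sidesteps all of this by peeling $\tau$ \emph{first}: since $m_{\phi_\tau}(\chi_VS_l)=2h>0$, one writes $\chi_V(\mathrm{St}_l,h)\rtimes\tau_0\twoheadrightarrow\tau$ with $\tau_0$ tempered and $\chi_VS_l\notin\phi_{\tau_0}$. After commuting $\mathrm{St}_l$ past $\mathrm{St}_{l+1}\nu^{1/2}$ and applying Corollary~\ref{cor_theta_epi} to strip the $\mathrm{St}_l$'s (which are not the distinguished $\mathrm{St}_l\nu^1$ at level $l+2$), one reduces to $\Theta_{l+2}$ of $\pi_0':=L(\chi_V(\mathrm{St}_{l+1}\nu^{1/2},k);\tau_0)$. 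Now applying Corollary~\ref{cor_shaving} to $\chi_V(\mathrm{St}_{l+1}\nu^{1/2},k)\rtimes\tau_0\twoheadrightarrow\pi_0'$, \emph{both} options (A) and (B) vanish simultaneously, because they terminate in $\Theta_{l+2}(\tau_0)$ and $\Theta_l(\tau_0)$ respectively, and both are zero precisely because $\chi_VS_l$ does not occur in $\phi_{\tau_0}$. This is the key structural move your proposal is missing: by transferring the even multiplicity of $\chi_VS_l$ out of the tempered core before invoking Kudla's filtration, the problematic lower Kudla stratum also terminates in a vanishing lift.
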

\begin{proof}
Let $m_{\phi_\tau}(\chi_VS_l) = 2h > 0$ (the multiplicity must be positive since $\theta_l(\tau) \neq 0)$. Then there exists a tempered representation $\tau_0$ whose parameter does not contain $\chi_VS_l$ such that
\[
\chi_V(St_l,h) \rtimes \tau_0 \twoheadrightarrow \tau
\]
(the representation on the left is completely reducible, of length two).
Denoting by $k$ be the number of factors $\chi_VSt_{l+1}\nu^\frac{1}{2}$ which appear in the standard module of $\pi$, we thus have
\[
\chi_V(St_{l+1}\nu^\frac{1}{2},k) \times \chi_V(St_l,h) \rtimes \tau_0 \twoheadrightarrow \pi.
\]
Since the corresponding segments are not linked, $St_l$ and $St_{l+1}\nu^\frac{1}{2}$ may switch places; we get
\[
\chi_V(St_l,h) \times \chi_V(St_{l+1}\nu^\frac{1}{2},k) \rtimes \tau_0 \twoheadrightarrow \pi.
\]
Repeatedly applying Corollary \ref{cor_theta_epi} we obtain
\[
\chi_W(St_l,h) \rtimes \Theta_{l+2}(\pi_0) \twoheadrightarrow \Theta_{l+2}(\pi),
\]
where we have used $\pi_0$ to denote the unique irreducible quotient of $\chi_V(St_{l+1}\nu^\frac{1}{2},k) \rtimes \tau_0$. We now show that $\Theta_{l+2}(\pi_0)=0$, which implies $\Theta_{l+2}(\pi) = 0$. Note that we cannot apply Corollary \ref{cor_theta_epi} to
\[
\chi_V(St_{l+1}\nu^\frac{1}{2},k) \rtimes \tau_0  \twoheadrightarrow \pi_0
\]
to compute $\Theta_{l+2}(\tau_0)$ because the segment which defines $St_{l+1}\nu^\frac{1}{2}$ ends in $\frac{l+1}{2}$. We therefore use Corollary \ref{cor_shaving}. Repeatedly applying the corollary to the above epimorphism, we find that one of the following must hold:
\begin{itemize}
\item $\chi_W(St_{l+1}\nu^\frac{1}{2},k) \rtimes \Theta_{l+2}(\tau_0) \twoheadrightarrow \theta_{l+2}(\pi_0)$;
\item $\chi_W(St_{l+1}\nu^\frac{1}{2},k-1) \times \chi_WSt_l \rtimes \Theta_{l}(\tau_0) \twoheadrightarrow \theta_{l+2}(\pi_0)$.
\end{itemize}
However, both $\Theta_{l+2}(\tau_0)$ and $\Theta_{l}(\tau_0)$ are equal to zero, since the parameter of $\tau_0$ no longer contains $\chi_VS_l$. This shows $\theta_{l+2}(\pi_0) = 0$, which in turn implies $\theta_{l+2}(\pi) = 0$.

To finish the proof, it remains to show that $\Theta_{l}(\pi) \neq 0$. To do this, we apply Corollary \ref{cor_theta_epi} to 
\[
\chi_V(St_{l+1}\nu^\frac{1}{2},k) \rtimes \tau \twoheadrightarrow \pi,
\]
this time lifting $\pi$ to the going-up tower. We get
\[
\chi_W(St_{l+1}\nu^\frac{1}{2},k) \rtimes \Theta_{-l}(\tau) \twoheadrightarrow \Theta_{-l}(\pi).
\]
Since $\Theta_l(\tau)\neq 0$ on the going-down tower, we must have $\Theta_{-l}(\tau) = 0$ on the going-up tower. The above epimorphism now shows that $\Theta_{-l}(\pi) = 0$ on the going-up tower. Again using the conservation relation, this implies $\Theta_{l}(\pi) \neq 0$ on the going-down tower, as desired.
\end{proof}

We are now ready to prove the main result of this section. For any $\pi \in \Irr(G_n)$, we may write its standard module as
\[
\chi_V\Xi \times \chi_V\delta_r \times \chi_V\delta_{r-1} \times \dotsb \times \chi_V\delta_1 \rtimes \tau.
\]
Here $\chi_V\delta_r \times \chi_V\delta_{r-1} \times \dotsb \times \chi_V\delta_1$ denotes the $\alpha$-block, with $\alpha = \frac{l(\tau)-1}{2}$. The rest of the factors are grouped into $\Xi$. We assume that the $\alpha$-block is sorted as described in \S\ref{subs_algorithm}. The following theorem determines the first occurrence of $\pi$. 
\begin{thm}
\label{theorem_occurrence}
Let
\[
\chi_V\Xi \times \chi_V\delta_r \times \chi_V\delta_{r-1} \times \dotsb \times \chi_V\delta_1 \rtimes \tau
\]
be the standard module of $\pi \in \Irr(G_n)$. Let $\delta_i = \drep{a_i}{b_i}$ for $i=1,\dotsc,r$, and let $l(\tau) = l$.

Consider all subsequences $
[c_1,d_1], [c_2,d_2],\dotsc, [c_k,d_k]$ of the sequence $[a_1,b_1],[a_2,b_2],\dotsc, \allowbreak [a_r,b_r]$ such that
\begin{enumerate}[(i)]
\item $d_{i+1} = d_i + 1$ for $i=1,\dotsc, k-1$;
\item $c_i \lneqq c_{i+1}$,  for $i=1,\dotsc, k-1$;
\item $d_1 = \frac{l+1}{2}$;
\item if $l>0$ and $m_{\phi_\tau}(\chi_VS_l)$ is even, $c_1 \neq \frac{1-l}{2}$.
\end{enumerate}
Let $f$ be the length of the longest subsequence satisfying these conditions. Then $l(\pi) = l + 2f$.
\end{thm}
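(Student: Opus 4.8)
The plan is to prove the two inequalities $l(\pi)\geq l+2f$ and $l(\pi)\leq l+2f$ directly on the going-down tower of $\pi$, which coincides with that of $\tau$; writing $\Theta_{l'}$ for the lift to the member of that tower at level $l'=n+\epsilon-m$, this means $\Theta_{l+2f}(\pi)\neq 0$ and $\Theta_{l+2f+2}(\pi)=0$. Along this tower only levels $l'\equiv l\pmod 2$ occur, and for every such $l'$ the exceptional factor $\textnormal{St}_j\nu^{(l'-j)/2}=\drep{(l'+1)/2-j}{(l'-1)/2}$ of Corollaries \ref{cor_theta_epi}--\ref{cor_shaving} lies in the class $\alpha+\Z$ and is supported on $\mathbb{1}$; hence no factor of $\chi_V\Xi$ meets the exceptional case of those corollaries at any relevant level. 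Peeling the factors of $\chi_V\Xi$ off with Corollary \ref{cor_theta_epi}, and using Corollary \ref{cor_shaving} with trivial $A$ and $\sigma=\theta_{l'}(\pi)$ for the reverse implication, we reduce to $\Xi=\mathbb{1}$, i.e.\ to $\pi=L(\chi_V\delta_r,\dots,\chi_V\delta_1;\tau)$ with the whole $\GL$-part equal to the sorted $\alpha$-block. Finally, the standard-module inequalities $a_i+b_i>0$ force $c_1\geq\tfrac{1-l}{2}$ for every segment $[c_1,\tfrac{l+1}{2}]$ in the block; comparing with the selection rule of the algorithm of \S\ref{subs_algorithm} one sees that $f$ is read off from the ladder that algorithm produces on the $\alpha$-block, up to a correction governed by the parity of $m_{\phi_\tau}(\chi_VS_l)$ concerning the special bottom rung $\chi_V\textnormal{St}_{l+1}\nu^{1/2}$ --- which is exactly what condition (iv) records.

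We prove $\Theta_{l+2f}(\pi)\neq 0$ by induction on $f$. For $f=0$ (automatic when $l=-1$, a situation settled by Proposition \ref{prop_-1}) one argues directly: applying Kudla's filtration (Theorem \ref{thm_Kudla}) and Proposition \ref{prop_Muic_isotype} to the $\alpha$-block and invoking $\Theta_l(\tau)\neq 0$ together with $\Theta_{l-2}(\tau)\neq 0$ --- the latter absorbing any factors that happen to be Steinberg twists at level $l$ --- yields $\Theta_l(\pi)\neq 0$. For the inductive step we pass to the algorithm's rearrangement (Remark \ref{rem_alg}(iii)): each rung $[c_i,d_i]$ is shortened to $[c_i,d_i-1]$, a factor $\chi_V\zrep{(l+1)/2}{(l+1)/2+f-1}$ is inserted, and $\pi$ remains the unique irreducible quotient by Proposition \ref{prop_unique_q}. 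In this normalized form the factors outside the ladder are no longer Steinberg twists at level $l+2f$, so Corollary \ref{cor_theta_epi} peels them off cleanly; after removing the $\zeta$-factor by a further Kudla-filtration computation one is reduced to a shorter instance whose base cases are Lemma \ref{lemma_reverse_temp_thin} (ladder of singletons), Lemma \ref{lemma_reverse_temp_thick} (a wider bottom rung $\textnormal{St}_{a+1}\nu^{1/2}$), and Lemma \ref{lemma_fat_even} (the parity-constrained bottom rung). Since each rung contributes $+2$ to the first-occurrence level, $f$ rungs give $\Theta_{l+2f}(\pi)\neq 0$.

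For $\Theta_{l+2f+2}(\pi)=0$ we argue by contradiction. Let $\sigma$ be an irreducible quotient of $\Theta_{l+2f+2}(\pi)$, and shave the factors of the $\alpha$-block off $\Theta_{l+2f+2}(\pi)$ by repeated use of Corollary \ref{cor_shaving}: at each step one either (A) passes to a strictly shorter $\GL$-part at the current level, or (B) --- only when the factor removed is $\textnormal{St}_j\nu^{(l'-j)/2}$ for the current level $l'$ --- lowers the level by $2$, replacing that factor with $\drep{(l'+1)/2-j}{(l'-3)/2}$. The crux is that a maximal chain of type-(B) shaves, traced back through the $\alpha$-block and through the reorderings performed along the way, is subordinate to a ladder of exactly the kind counted by $f$; thus at most $f$ level-drops can occur, and since the shaving starts two levels above $l+2f$, once the entire $\GL$-part has been removed we are left with $\Theta_{l'}(\tau)\neq 0$ for some $l'\geq l+2$, contradicting $l(\tau)=l$. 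Condition (iv) is handled as in Lemma \ref{lemma_fat_even}, and the degenerate values $l\in\{-1,0\}$ reduce to Proposition \ref{prop_-1} and to the base case above.

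The main obstacle is the upper bound, specifically the claim that every maximal chain of type-(B) shaves is subordinate to a ladder of the type counted by $f$: one must exclude the possibility of manufacturing extra Steinberg twists through the reorderings sanctioned by Lemmas \ref{lemma_zel0}--\ref{lem_zel2}, which is precisely the combinatorial phenomenon the algorithm of Section \ref{subs_algorithm} was designed to control, and where the maximality and minimal-width features of the chosen ladder are essential. A secondary, pervasive difficulty is that when $\tau$ is tempered but not square-integrable the multiplicities $m_{\phi_\tau}(\chi_VS_j)$ determine which shave options are actually realized, forcing these multiplicities to be carried as part of the inductive data --- this is the reason Lemmas \ref{lemma_reverse_temp_thick} and \ref{lemma_fat_even} are stated in terms of parameters rather than representations.
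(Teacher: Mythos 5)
Your plan to prove both inequalities ``directly on the going-down tower'' breaks at the first one. The only handle you have on theta lifts of induced representations is Kudla's filtration through Proposition~\ref{prop_Muic_isotype} and Corollaries~\ref{cor_theta_epi}--\ref{cor_shaving}, and these produce \emph{epimorphisms} of the shape $\chi_W\delta\rtimes\Theta_{l'}(\pi_0)\twoheadrightarrow\Theta_{l'}(\pi)$ (or $A\rtimes\chi_W\delta\rtimes\Theta_{l'}(\pi_0)\twoheadrightarrow\sigma$). Such a surjection lets you propagate \emph{vanishing} downward (source zero $\Rightarrow$ target zero) and \emph{non-vanishing} inward (target non-zero $\Rightarrow$ source non-zero), but it never lets you conclude $\Theta_{l+2f}(\pi)\neq 0$ from $\Theta_{l+2f}(\pi')\neq 0$: the surjection could have image zero. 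So ``Corollary~\ref{cor_theta_epi} peels them off cleanly... one is reduced to a shorter instance'' does not establish non-vanishing for $\pi$, only for the piece $\pi'$ you reduced to. The paper circumvents exactly this: it applies Lemmas~\ref{lemma_reverse_temp_thin} and \ref{lemma_reverse_temp_thick} to get $l(\pi')=l+2f$, then \emph{switches to the going-up tower}, where $\Theta_{-l-2f}(\pi')=0$ (conservation); Corollary~\ref{cor_theta_epi} applied \emph{on the going-up tower} to $\chi_V\Xi\times\chi_V\Pi\rtimes\pi'\twoheadrightarrow\pi$ then gives $\Theta_{-l-2f}(\pi)=0$, and the conservation relation converts this back to $\Theta_{l+2f}(\pi)\neq 0$ on the going-down tower. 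Without this detour there is no mechanism to transfer non-vanishing from $\pi'$ to $\pi$, and your induction for the lower bound does not close.

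For the upper bound $\Theta_{l+2f+2}(\pi)=0$, your shaving strategy (iterated Corollary~\ref{cor_shaving}) is viable in principle, but the step you yourself flag as ``the main obstacle'' --- that a maximal chain of type-(B) shaves is subordinate to a ladder of the type counted by $f$, i.e.\ at most $f$ level-drops can occur --- is precisely the content that needs a proof and is not supplied. The paper avoids having to control arbitrary shaving chains by a different induction: it does \emph{not} trace back through shaves. Instead it splits the $\alpha$-block into $\Pi^+$ (segments ending $\geq\frac{l-1}{2}+f+2$), $\Pi^{f+1}$ (ending in $\frac{l-1}{2}+f+1$), $\Pi^f$ (the top rungs $[c_f,d_f]$), and $\Pi^-$, observes that $\Pi^{f+1}$ and $\Pi^f$ commute (since any segment ending in $\frac{l-1}{2}+f+1$ contains every possible $[c_f,d_f]$), absorbs $\Pi^{f+1}\times\Pi^-\rtimes\tau$ into a single quotient $\pi'$, and applies Corollary~\ref{cor_theta_epi} (legal: none of $\Xi,\Pi^+,\Pi^f$ end at level $\frac{l-1}{2}+f$) to reduce to the induction hypothesis for $\pi'$, whose longest ladder has length $f-1$. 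This sidesteps your combinatorial obstacle entirely. As written, the proposal has a gap on each side of the equality: the lower-bound induction uses the epimorphisms in an impossible direction, and the upper-bound induction rests on an unproven ladder-control claim.
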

\begin{proof}
If $l=-1$, then $f=0$ because of (iii)---there is no segment ending in $0$. Therefore, the statement of the theorem in this case amounts to $l(\pi) = -1$; this is proved in Proposition \ref{prop_-1}. Thus we may assume that $l \geq 0$.
First, we prove that $\Theta_{l+2f}(\pi)\neq 0$ on the going-down tower.

Apply the algorithm of \S\ref{subs_algorithm} to the $\alpha$-block of $\pi$ setting the initial value of $k$ to $\frac{l-1}{2}+f$. Let
\begin{equation*}
\begin{gathered}
\chi_V\Xi \times\chi_V\drep{a_r'}{b_r'} \times \chi_V\drep{a_{r-1}'}{b_{r-1}'} \times \dotsm \times \chi_V\drep{a_j'}{b_j'} \times \chi_V\zeta(b_j'+1,k)\\ \times \chi_V\drep{a_{j-1}'}{b_{j-1}'} \times \dotsm \times \chi_V\drep{a_1'}{b_1'} \rtimes \tau
\end{gathered}
\end{equation*}
be the representation obtained by applying the algorithm to the $\alpha$-block. By the definition of $f$ (and our choice of initial $k$), the algorithm goes through at least $f$ iterations before stopping. In other words, we have $b_j'+1 \leq \frac{l+1}{2}$. Recall that there are two reasons for the algorithm to stop (see Remark \ref{rem_alg} (i)). If none of the factors $\drep{a_{j-1}'}{b_{j-1}'}, \dotsc, \drep{a_1'}{b_1'}$ equals $\drep{1-b_j'}{b_j'}$, then the above representation is isomorphic to
\begin{equation}
\label{eq_6}
\begin{gathered}
\chi_V\Xi \times \chi_V\drep{a_r'}{b_r'} \times \chi_V\drep{a_{r-1}'}{b_{r-1}'} \times \dotsm \times \chi_V\drep{a_j'}{b_j'} \\
\times \chi_V\drep{a_{j-1}'}{b_{j-1}'} \times \dotsm \times \chi_V\drep{a_1'}{b_1'} \times \chi_V\zeta(b_j'+1,k) \rtimes \tau,
\end{gathered}
\end{equation}
by Lemma \ref{lemma_zel0}. If some of the factors are equal to $\drep{1-b_j'}{b_j'}$ (say, those marked with indices $j-h,\dotsc,j-1$), then the representation is isomorphic to
\begin{equation}
\label{eq_7}
\begin{gathered}
\chi_V\Xi \times\chi_V\drep{a_r'}{b_r'} \times \chi_V\drep{a_{r-1}'}{b_{r-1}'} \times \dotsm \times \chi_V\drep{a_j'}{b_j'}\\ \times \chi_V\drep{a_{j-h-1}'}{b_{j-h-1}'} \times \dotsm \times \chi_V\drep{a_{1}'}{b_{1}'} \times \chi_V\zeta(b_j'+1,k) \times \chi_V(\drep{1-b_j'}{b_j'},h)  \rtimes \tau,
\end{gathered}
\end{equation}
where $(\drep{1-b_j'}{b_j'},h)$ denotes the product of $h$ factors $\drep{1-b_j'}{b_j'}$. In either case, $\pi$ is the unique irreducible quotient, as Proposition \ref{prop_unique_q} shows. Therefore, letting $\pi'$ denote the unique irreducible quotient of
\[
\begin{cases}
\chi_V\zeta(b_j'+1,k) \rtimes \tau, &\quad \text{if \eqref{eq_6} holds},\\
\chi_V\zeta(b_j'+1,k) \times \chi_V\drep{1-b_j'}{b_j'} \rtimes \tau &\quad \text{if \eqref{eq_7} holds},
\end{cases}
\]
and setting $\Pi$ to be
\[
\begin{cases}
\drep{a_r'}{b_r'} \times \dotsm \times \drep{a_1'}{b_1'}, &\quad \text{if \eqref{eq_6} holds},\\
\mbox{}\\
\drep{a_r'}{b_r'} \times \dotsm \times \drep{a_j'}{b_j'}\\
\quad \times \drep{a_{j-h-1}'}{b_{j-h-1}'} \times \dotsm \times \drep{a_{1}'}{b_{1}'} \times (\drep{1-b_j'}{b_j'},h-1) &\quad \text{if \eqref{eq_7} holds},
\end{cases}
\]
we get
\begin{equation}
\label{eq_Xi_Pi}
\chi_V\Xi \times \chi_V\Pi \rtimes \pi' \twoheadrightarrow \pi.
\end{equation}
Notice that in case \eqref{eq_7} we had to use Lemma \ref{lem_zel2} to swap the (unique) irreducible quotient of $\zeta(b_j'+1,k) \times \drep{1-b_j'}{b_j'}$ with $(\drep{1-b_j'}{b_j'},h-1)$.

Lemmas \ref{lemma_reverse_temp_thin} (in case \eqref{eq_6}) and \ref{lemma_reverse_temp_thick} (in case \eqref{eq_7}) show that $l(\pi') = l +2f$. This implies that on the going-up tower, $\Theta_{-l-2f}(\pi') = 0$. We now lift $\pi$ to level $-l-2f$ on the going-up tower by repeatedly applying Corollary \ref{cor_theta_epi} to \eqref{eq_Xi_Pi}; we get
\[
\chi_W\Xi \times \chi_W\Pi \rtimes \Theta_{-l-2f}(\pi') \twoheadrightarrow \Theta_{-l-2f}(\pi).
\]
As $\Theta_{-l-2f}(\pi') = 0$, this also implies $\Theta_{-l-2f}(\pi) = 0$ on the going-up tower. The conservation relation now shows that we have $\Theta_{l+2f}(\pi) \neq 0$ on the going-down tower.

To complete the proof, we prove that $\Theta_{l+2f+2}(\pi) = 0$ on the going-down tower. The proof of this fact proceeds by induction on $f$.
We consider the base case, $f=0$. By the definition of of $f$, $f=0$ implies that
\begin{enumerate}[a)]
\item there are no segments ending in $\frac{l+1}{2}$ in the $\alpha$-block (see condition (iii)); or
\item $l>0$, $m_{\phi_\tau}(\chi_VS_l)$ is even, and any segment ending in $\frac{l+1}{2}$ is equal to $[\frac{1-l}{2},\frac{l+1}{2}]$ (cond.(iv)). 
\end{enumerate}
We thus have
\[
\chi_V\Xi \times \chi_V\Pi \rtimes \pi' \twoheadrightarrow \pi
\]
where $\pi' = \tau$ and $\Pi$ is denotes the $\alpha$-block in case (a), whereas $\pi' = L(\chi_VSt_{l+1}\nu^\frac{1}{2}, \dotsc,\allowbreak\chi_VSt_{l+1}\nu^\frac{1}{2}; \tau)$ and $\Pi$ denotes the $\alpha$-block without the factors $\chi_VSt_{l+1}\nu^\frac{1}{2}$, in case (b).

In short, $\Pi$ contains no representations defined by segments ending in $\frac{l+1}{2}$, so we may apply Corollary \ref{cor_theta_epi} to get
\[
\chi_W\Xi \times \chi_W\Pi \rtimes \Theta_{l+2}(\pi') \twoheadrightarrow \Theta_{l+2}(\pi).
\]
We now claim that $\Theta_{l+2}(\pi') = 0$. In case (a) we have $\pi' = \tau$, so this follows from the assumption $l(\tau)=l$. In case (b), this follows from Lemma \ref{lemma_fat_even}. Therefore, $\Theta_{l+2}(\pi') = 0$, implying $\Theta_{l+2}(\pi)=0$. This completes the base case $f=0$.

Now let $f$ be as defined in the statement of the theorem. The last segment in the sequence $[c_1,d_1],\dotsc,[c_f,d_f]$ ends in $\frac{l-1}{2}+f$ (the sequence may not be unique, but the numbers $d_1,\dotsc,d_f$ are, by condition (i)). Since $f$ is defined to be the maximal length, there is no sequence of length $f+1$ satisfying conditions (i)--(iv). Thus, any segment within the $\alpha$-block which ends in $\frac{l-1}{2}+f+1$ must contain all possible segments $[c_f,d_f]$.
This allows us to rearrange the standard module. We introduce some temporary notation:
\begin{align*}
\Pi^+ &= \text{ the part of } \alpha\text{-block containing all segments ending in }\frac{l-1}{2}+f+2\text{ or higher}\\
\Pi^{f+1} &= \text{ the part of } \alpha\text{-block containing all segments ending in }\frac{l-1}{2}+f+1\\
\Pi^f &= \text{ the part of } \alpha\text{-block containing all possible segments }[c_f,d_f]\\
\Pi^- &= \text{ the rest of the } \alpha\text{-block}.
\end{align*}
Thus, initially, we have
\[
\chi_V\Xi \times \chi_V\Pi^+ \times \chi_V\Pi^{f+1} \times  \chi_V\Pi^{f} \times  \chi_V\Pi^{-} \rtimes \tau \twoheadrightarrow \pi.
\]
By the above discussion, $\Pi^{f+1}$ and $\Pi^{f}$ may switch places. We thus have
\[
\chi_V\Xi \times \chi_V\Pi^+ \times \chi_V\Pi^{f} \times  \pi' \twoheadrightarrow \pi
\]
where $\pi'$ denotes the unique irreducible quotient of $\chi_V\Pi^{f+1} \times  \chi_V\Pi^{-} \rtimes \tau$. We may now use Corollary \ref{cor_theta_epi} to show that
\[
\chi_W\Xi \times \chi_W\Pi^+ \times \chi_W\Pi^{f} \rtimes  \Theta_{l+2f+2}(\pi') \twoheadrightarrow \Theta_{l+2f+2}(\pi)
\]
(none of the segments from $\Xi, \Pi^+$ or $\Pi^{f}$ end in $\frac{l-1}{2}+f$). 
We now apply the induction hypothesis to $\pi'$. Notice that the $\alpha$-block of $\pi'$ is equal to $\chi_V\Pi^{f+1} \times  \chi_V\Pi^{-}$. Thus, it may still contain some segments ending in $\frac{l+1}{2}+f$; however, since we have removed all the segments $[c_k,d_k]$ which appear as the top rung of some	 ladder of length $f$ (using the terminology of Remark \ref{rem_alg} (iii)), the longest ladder in the $\alpha$-block of $\pi'$ has length $f-1$. Therefore, by the induction hypothesis, $\Theta_{l+2f}(\pi') = 0$, which further implies $\Theta_{l+2f+2}(\pi') = 0$. Now the above epimorphism shows that $\Theta_{l+2f+2}(\pi)=0$, which we needed to prove.
\end{proof}

\section{The lifts}
\label{sec_lifts}
Having determined the first occurrence index, we now turn to describing the theta lifts explicitly. The following theorem provides a complete description of the non-zero lifts. We continue using the notation of Theorem \ref{theorem_occurrence}; additionally, we let $A$ denote the $\alpha$-block (recall that $\alpha \equiv \frac{l(\tau)-1}{2} \pmod {\mathbb{Z}}$).
\begin{thm}
\label{theorem_appearance}
Let $\pi$ be an irreducible representation of $G_n$ with standard module
\begin{equation}
\label{eq_stdmod}
\chi_V\Xi \times \chi_VA \rtimes \tau
\end{equation}
where $\tau$ is an irreducible tempered representation with parameter $\phi$. Let $l \in \mathbb{Z}$ be such that $\theta_l(\pi)\neq 0$. The standard module of $\theta_l(\pi)$ is obtained by replacing $\chi_V$ with $\chi_W$ and applying a certain transformation to the $\GL$-factors appearing in \eqref{eq_stdmod}. We describe this transformation in terms of ladders. We have three distinct cases:\\
\noindent\textbf{(1) Going-down tower, low rank.}
Let $l \in \mathbb{Z}_{> 0}$ be such that $\theta_l(\pi)\neq 0$ on the going-down tower. Let $[c_t,d_t], [c_{t-1},d_{t-1}], \dotsc, [c_1,d_1]$ be the longest ladder in the $\alpha$-block of $\pi$ such that 
\begin{enumerate}[(i)]
\item $d_t = \frac{l-1}{2}$
\item $d_{i+1} = d_i + 1$ for $i=1,\dotsc,t-1$
\item $\theta_{l-2t}(\tau) \neq 0$ (note that $d_1 = \frac{l+1}{2}-t$; in particular, since $d_1 > 0$, we have $l-2t\geq 0$) 
\end{enumerate}
Notice that we allow $t=0$. The existence of such a ladder is guaranteed by Theorem \ref{theorem_occurrence} and our assumption that $\theta_l(\pi) \neq 0$. The same result shows that when $l-2t > 0$ and $m_\phi(\chi_VS_{l-2t})$ is even, we may choose such a ladder with $c_1 \gneqq 1-d_1$. If there is more than one such ladder, take the one which minimizes the segment widths (in the sense of Remark \ref{rem_alg} (iii)).
\begin{itemize} 
\item Assume $l-2t=0$ or $[c_1,d_1] \neq [1-d_1,d_1]$. Then the standard module of $\theta_l(\pi)$ is obtained from \eqref{eq_stdmod} by replacing $[c_i,d_i]$ with $[c_i,d_i-1]$ for $i=1,\dotsc, t$, and replacing $\tau$ by $\theta_{l-2t}(\tau)$.
\item Assume $l-2t > 0$ and $[c_1,d_1] = [1-d_1,d_1]$. In particular, since $\theta_l(\pi)\neq 0$, Theorem \ref{theorem_occurrence} implies that $m_\phi(\chi_VS_{l-2t})$ is odd. Let $\tau' = L(\chi_VSt_{l-2t+1}\nu^\frac{1}{2};\tau)$. Then $\theta_{l-2t+2}(\tau')$ is non-zero and tempered.

The standard module of $\theta_l(\pi)$ is obtained from \eqref{eq_stdmod} by removing $[c_1,d_1]$, replacing $[c_i,d_i]$ with $[c_i,d_i-1]$ for $i=2,\dotsc, t$ and replacing $\tau$ by $\theta_{l-2t+2}(\tau')$.
\end{itemize}
In the special going-down case when $l=0$, there is no ladder; we only replace $\tau$ by $\theta_0(\tau)$.
After making these changes, we sort the $\alpha$-block if needed.

\noindent\textbf{(2) Going-down tower, high rank.} Let $l > 0$ and consider $\theta_{-l}(\pi)$ on the going-down tower. Recall that $\kappa \in \{1,2\} \equiv l$. The standard module of $\theta_{-l}(\pi)$ is obtained from \eqref{eq_stdmod} by replacing $\tau$ with $\theta_{\kappa-2}(\tau)$ and inserting $\chi_W|\cdot|^\frac{3-\kappa}{2}, \chi_W|\cdot|^\frac{5-\kappa}{2}, \dotsc, \chi_W|\cdot|^\frac{l-1}{2}$ among the $\GL$-factors so that the $\alpha$-block remains sorted.

\noindent\textbf{(3) Going-up tower.} Let $l > 0$ such that $\theta_{-l}(\pi)\neq 0$ on the going-up tower and let $l_0 = l(\tau)$. The conservation relation implies that $\theta_{-l_0-2}$ is the first lift of $\tau$ on the going-up tower. We may assume that $l_0 \geq 0$; the only remaining possibility is $l_0 = -1$, but then $\theta_{-1}(\pi)$ is the first lift on both towers, and the lifts on both towers are treated by the previous case (going-down tower, high rank).

Let $t=\frac{l-l_0}{2}-1$ and let $[c_1',d_1'], [c_2',d_2'], \dotsc, [c_t',d_t']$ be the ladder in the $\alpha$-block which maximizes the segment widths such that $d_1' = \frac{l_0+1}{2}$; if there is no such ladder of length $t$, we take the longest available ladder and use empty segments to achieve length $t$ (see (iv) of Remark \ref{rem_alg}). The standard module of $\theta_{-l}(\pi)$ is obtained from \eqref{eq_stdmod} by replacing $[c_i',d_i']$ with $[c_i',d_i'+1]$ for $i=1,\dotsc,t$ so that the $\alpha$-block remains sorted. Additionally, 
\begin{itemize}
\item assume that $l_0 = 0$ or that $m_\phi(\chi_VS_{l_0})$ is odd. In that case $\tau$ is replaced by $\theta_{-2-l_0}(\tau)$, which is tempered.
\item Assume that $l_0 > 0$ and that $m_\phi(\chi_VS_{l_0})$ is even. In that case $\theta_{-2-l_0}(\tau)$ is equal to $L(\chi_W\text{St}_{l_0+1}\nu^\frac{1}{2}; \sigma)$ for a certain tempered representation $\sigma$. We insert $\chi_W\text{St}_{l_0+1}\nu^\frac{1}{2}$ among the $\GL$-factors so that the $\alpha$-block remains sorted, and replace $\tau$ by $\sigma$.
\end{itemize}
\end{thm}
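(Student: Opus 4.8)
The plan rests, in all three cases, on the following reduction: to prove that a given induced representation $X$ is the standard module of $\theta_l(\pi)$ it suffices to produce a surjection $X\twoheadrightarrow\theta_l(\pi)$ and to check that $X$, after re-sorting its $\GL$-factors into Langlands order (using the linkedness Lemmas \ref{lemma_zel0}--\ref{lem_zel2}), really is a standard module; since $\theta_l(\pi)$ is irreducible (Theorem \ref{thm_Howe}) and a standard module has a unique irreducible quotient, $X$ is then \emph{the} standard module of $\theta_l(\pi)$. I will build the surjection by peeling $\GL$-factors off the standard module \eqref{eq_stdmod} of $\pi$ by means of Corollaries \ref{cor_theta_epi} and \ref{cor_shaving}, until the computation is reduced to a theta lift of the tempered piece $\tau$ --- whose standard module is supplied by Atobe and Gan \cite{Atobe_Gan} --- and then re-assembling. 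The one structural fact to keep in mind throughout is that the essentially square-integrable representation excluded from Corollary \ref{cor_theta_epi} at relative level $l'$ is precisely the one attached to the segment ending at $\frac{l'-1}{2}$.

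\emph{Cases (2) and (3) (the ``unobstructed'' lifts).} In both cases the relative level is $-l$ with $l>0$, so the excluded twist ends at the negative half-integer $\frac{-l-1}{2}$, whereas every $\GL$-factor $\drep{a_i}{b_i}$ of a standard module has $b_i>0$ (from $a_i+b_i>0$ and $a_i\leq b_i$). Hence Corollary \ref{cor_theta_epi} applies to every factor and peels the entire $\GL$-block off \eqref{eq_stdmod}, giving $\chi_W\Xi\times\chi_W A\rtimes\Theta_{-l}(\tau)\twoheadrightarrow\Theta_{-l}(\pi)$. For case (2), $\Theta_{-l}(\tau)$ is the deep going-down lift of the tempered $\tau$, which by \cite{Atobe_Gan} is $L(\chi_W|\cdot|^{\frac{l-1}{2}},\dots,\chi_W|\cdot|^{\frac{3-\kappa}{2}};\theta_{\kappa-2}(\tau))$ with $\theta_{\kappa-2}(\tau)$ tempered; re-assembling and re-sorting (a routine use of the linkedness lemmas, since all the inserted characters have positive, pairwise distinct exponents) produces the asserted standard module. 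Case (3) runs the same way, except that the going-up lift $\Theta_{-l}(\tau)$ --- again read off from \cite{Atobe_Gan}, and of the form $L(\text{ladder};\theta_{-l_0-2}(\tau))$ or $L(\text{ladder},\chi_W\mathrm{St}_{l_0+1}\nu^{\frac12};\sigma)$ according to the parity of $m_\phi(\chi_VS_{l_0})$ --- carries a ladder whose rungs are linked to segments of $A$; here Lemmas \ref{lem_zel1}--\ref{lem_zel2}, i.e.\ the reverse transformation of Remark \ref{rem_alg}(iv), show that the net effect of re-assembling and re-sorting is exactly the operation $[c_i',d_i']\mapsto[c_i',d_i'+1]$ announced in (3), together with the insertion of $\chi_W\mathrm{St}_{l_0+1}\nu^{\frac12}$ and the replacement $\tau\mapsto\sigma$ in the parity-special subcase.

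\emph{Case (1) (the substantive case).} Here I will argue by induction on $t$, the number of rungs of the ladder (which equals $\mathrm{len}_{(l-1)/2}(\pi)$). Using the algorithm of \S\ref{subs_algorithm} with input $k=\frac{l-1}{2}$ together with Proposition \ref{prop_unique_q}, I first replace \eqref{eq_stdmod} by an induced representation that still has $\pi$ as unique irreducible quotient but in which the chosen ladder has been shortened rung by rung ($[c_i,d_i]\mapsto[c_i,d_i-1]$) and a single factor $\chi_V\zrep{\frac{l+1}{2}-t}{\frac{l-1}{2}}$ has been moved next to $\tau$ (with an adjacent $\chi_V\mathrm{St}_{l-2t+1}\nu^{\frac12}=\chi_V\drep{\frac{1-l}{2}+t}{\frac{l+1}{2}-t}$ in the parity-special situation). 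Then I peel factors: those not ending at $\frac{l-1}{2}$ come off by Corollary \ref{cor_theta_epi}, while the top exponents of the $\zeta$-factor --- and any remaining segments ending at $\frac{l-1}{2}$ --- are exactly the twist excluded from Corollary \ref{cor_theta_epi}, so Corollary \ref{cor_shaving} must be used and it produces the dichotomy (A)/(B). The inductive step amounts to deciding which alternative is realized and then applying the inductive hypothesis to the smaller pair obtained by deleting the peeled factor (and, in alternative (B), lowering the level by $2$); the base case $t=0$ is precisely the Atobe--Gan description of $\theta_l(\tau)$, supplemented in the parity-special subcase by the fact that $\theta_{l-2t+2}(\tau')$ is nonzero and tempered for $\tau'=L(\chi_V\mathrm{St}_{l-2t+1}\nu^{\frac12};\tau)$ (Theorem 4.5 of \cite{Atobe_Gan}, as already used in the proof of Lemma \ref{lemma_reverse_temp_thick}).

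\textbf{The main obstacle} is exactly this branch determination in Corollary \ref{cor_shaving}, and the bookkeeping that the peeling terminates at $\theta_{l-2t}(\tau)$ (resp.\ $\theta_{l-2t+2}(\tau')$) and re-assembles into the claimed standard module. I expect to rule out the wrong alternative by the same mechanism that underlies Lemmas \ref{lemma_reverse_temp_thin} and \ref{lemma_reverse_temp_thick}: choosing it would produce an epimorphism onto the theta lift of a representation whose relative first occurrence index is constrained by Theorem \ref{theorem_occurrence} --- with the relevant ladder lengths controlled by Lemma \ref{lem_len_k} --- and this constraint contradicts the hypothesis $\theta_l(\pi)\neq0$. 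Once the correct alternative is identified at every step, the shortened rungs $[c_i,d_i-1]$ have positive centre (because the ladder top was $\frac{l-1}{2}$ with $l>0$) and stay lexicographically ordered, so after re-sorting one obtains a genuine standard module surjecting onto $\theta_l(\pi)$, hence its standard module, which is the stated transformation. Two technical points arising in the re-assembly --- the behaviour of the $\zeta$-factor when it is moved past a self-dual segment, and an auxiliary comparison of ladder lengths used in the inductive step --- will be isolated as the two lemmas whose proofs are postponed to the appendix.
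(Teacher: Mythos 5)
Your reduction (produce a surjection $X\twoheadrightarrow\theta_l(\pi)$ from a representation that re-sorts into a standard module, then conclude $X$ is the standard module) is sound in principle, and cases (2) and (3) could plausibly be set up this way. However, the direction of your argument for case (1) differs essentially from the paper's, and it contains a genuine gap that you have correctly flagged but not closed. You peel forward from $\pi$'s side, terminating at an epimorphism of the form $\chi_W\Xi\times(\cdots)\rtimes\Theta_{l-2t}(\tau)\twoheadrightarrow\theta_l(\pi)$. At that point you must identify which irreducible subquotient of $\Theta_{l-2t}(\tau)$ participates. For $\tau$ tempered but not discrete series, the subquotients of the full lift $\Theta$ are not described in the literature; the paper explicitly says as much (``as subquotients of full theta lifts of tempered representations are not fully understood, we prefer to work with discrete series representations'') and this is precisely why its proof runs \emph{backward}: starting from the candidate $\pi'$ on the $H$-side, writing its tempered piece as $\chi_W\Delta\rtimes\sigma'$ with $\sigma'$ in discrete series, and invoking Mui\'{c}'s explicit classification of the subquotients of $\Theta$ lifts of discrete series (\cite{Muic_theta_discrete_Israel}, Theorems 4.1 and 6.1). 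Lemma \ref{lemma_small_theta} then pins down the participating subquotient via the inductive hypothesis and Lemma \ref{lem_len_k}. Your proposal never introduces this discrete-series reduction, and without it the branch determination in Corollary \ref{cor_shaving} cannot be carried out.

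The second weakness is in the mechanism you propose to rule out the wrong branch: ``choosing it would produce an epimorphism onto the theta lift of a representation whose relative first occurrence index is constrained \dots and this constraint contradicts $\theta_l(\pi)\neq 0$.'' The hypothesis $\theta_l(\pi)\neq 0$ by itself does not clash with the wrong branch; the actual contradiction in the paper comes from the round-trip identity $\theta_l\circ\theta_{-l}=\mathrm{id}$: one computes $\theta_l(\theta_{-l}(\pi'))$ via the inductive hypothesis and shows its standard module cannot coincide with $\pi'$'s (e.g.\ the tempered parts, or the multiplicity of the self-dual segment $[1-d_1,d_1]$, disagree). Running forward, the analogue would be to compute $\theta_{-l}(\theta_l(\pi))$ and compare with $\pi$, but that presupposes the very description of $\theta_l(\pi)$ you are trying to establish; this circularity is why the backward formulation is the right one. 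Finally, the paper does not prove (2) and (3) by a direct forward peeling either (your reading of $\Theta_{-l}(\tau)$ from \cite{Atobe_Gan} describes only its Langlands quotient $\theta_{-l}(\tau)$, not its full subquotient structure); instead it applies part (1) to the candidate $\pi'$, computes $\theta_l(\pi')$ and checks it equals $\pi$, again the round-trip argument. Your appeal to the full lift $\Theta_{-l}(\tau)$ in (2)/(3) is therefore not justified without further work.
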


The proof of (2) and (3) will follow from (1). The proof of (1) contains many details which handle exceptional cases and deal with various situations arising from possibly complicated structure of the standard module of $\pi$. In order to present the main idea of proof without obscuring it with technical details, we now explain it with a simple example.
\begin{exmp}
Let $\tau \in \Irr(\text{Sp}(W_n))$ be a discrete series representation with $l(\tau) = 3$. Let $\pi$ be the unique irreducible quotient of
\[
\chi_V\drep{4}{5} \times \chi_V\drep{3}{4} \times \chi_V\drep{2}{3}   \times \chi_V\drep{1}{2} \rtimes \tau.
\]
By Theorem \ref{theorem_occurrence} we have $l(\pi) = 11$, and Theorem \ref{theorem_appearance} predicts that $\theta_{11}(\pi)$ is the unique irreducible quotient of
\begin{equation}
\label{eq_def}
|\cdot|^4 \times |\cdot|^3 \times |\cdot|^2  \times |\cdot|^1 \rtimes \theta_3(\tau).
\end{equation}
Notice that the length of the ladder described in Theorem \ref{theorem_appearance} (1) for $l=11$ is $t = 4$. We let $\pi'$ denote the irreducible quotient of \eqref{eq_def}; we wish to show that $\theta_{-11}(\pi') = \pi$. Letting $\sigma$ denote $\theta_{3}(\tau)$ and applying Corollary \ref{cor_theta_epi} to
\[
|\cdot|^4 \times |\cdot|^3 \times |\cdot|^2  \times |\cdot|^1 \rtimes \sigma \twoheadrightarrow \pi',
\]
we get
\begin{equation}
\label{eq_ex_2}
\chi_V|\cdot|^4 \times \chi_V|\cdot|^3 \times \chi_V|\cdot|^2  \times \chi_V|\cdot|^1 \rtimes \Theta_{-11}(\sigma) \twoheadrightarrow \theta_{-11}(\pi').
\end{equation}
For tempered representations, the complete description of subquotients of full theta lifts is not known. This causes major technical complications in the proof, and is the reason why we switch from \eqref{eq_sigma_back} to \eqref{eq_stdmod_lift} in the proof below. However, in this case, $\sigma$ is a discrete series representation, so we may use the results of \cite{Muic_theta_discrete_Israel} to describe all possible subquotients of $\Theta_{-11}(\sigma)$. Thus, any subquotient of $\Theta_{-11}(\sigma)$ is the irreducible quotient of $\zrep{a}{5} \rtimes \tau'$, where $\tau'$ is a tempered subquotient of $\Theta_{1-2a}(\sigma)$, $a \in \{2,3,4,5,6\}$. In particular, $\theta_{-11}(\sigma)$ is obtained when $a = 2$ (in that case $\Theta_{-3}(\sigma)$ is irreducible, i.e. $\Theta_{-3}(\sigma) = \theta_{-3}(\sigma) = \tau)$.

A critical step in our proof is proving that $\theta_{-11}(\sigma)$ is the irreducible subquotient of $\Theta_{-11}(\sigma)$ which participates in the above epimorphism \eqref{eq_ex_2}---see Lemma \ref{lemma_small_theta}. To prove this, we assume the contrary, i.e.~that
\[
\chi_V|\cdot|^4 \times \chi_V|\cdot|^3 \times \chi_V|\cdot|^2  \times \chi_V|\cdot|^1 \times \chi_V\zrep{a}{5} \rtimes \tau' \twoheadrightarrow \theta_{-11}(\pi')
\]
for some $a > 2$. For example, if $a = 4$, this means
\[
\chi_V|\cdot|^4 \times \chi_V|\cdot|^3 \times \chi_V|\cdot|^2  \times \chi_V|\cdot|^1 \times \chi_V\zrep{4}{5} \rtimes \tau' \twoheadrightarrow \theta_{-11}(\pi')
\]
But we may then show (using Proposition \ref{prop_unique_q}) that the left-hand side above has a unique irreducible quotient, and that
\[
\chi_V\drep{4}{5} \times \chi_V\drep{3}{4} \times \chi_V|\cdot|^2  \times \chi_V|\cdot|^1 \rtimes \tau' \twoheadrightarrow \theta_{-11}(\pi').
\]
We now notice that the ladder described in Theorem \ref{theorem_appearance} (1) (for $l= 11$) in the $\alpha$-block of the above representation is of length $t = 2$. The fact that this ladder is shorter than the original ladder in $\pi$ enables an inductive proof: if we assume that the Theorem \ref{theorem_appearance} (1) holds in any situation where the ladder is of length $t < 4$, then we may apply it to the above representation. We get that
$\pi' = \theta_{11}(\theta_{-11}(\pi'))$ is the unique irreducible quotient of
\[
|\cdot|^4 \times |\cdot|^3 \times |\cdot|^2  \times |\cdot|^1 \rtimes \theta_7(\tau').
\]
However, comparing this representation with \eqref{eq_def}, we easily prove that $\theta_7(\tau')$ cannot be equal to $\theta_3(\tau)$. We have thus arrived at a contradiction, proving that $a=2$, i.e. that the subquotient which participates in \eqref{eq_ex_2} is equal to $\theta_{-11}(\sigma)$.

This explains the main idea in the inductive step which allows us to expand the statement of Theorem \ref{theorem_appearance} (1) from representations with ladder length $t < 4$ to those with ladder length $t = 4$. The rest of the proof is more complicated when $\tau$ is tempered (but not in discrete series); however, the idea outlined above is the main ingredient in the proof even in case when $\tau$ is not in discrete series.
\end{exmp}

\begin{proof}[Proof of Theorem \ref{theorem_appearance}]
We consider the three cases separately.

\noindent\textbf{(1) Going-down tower, low rank.} The remaining two cases will follow from this one. We fix $l \geq 0$. We induce on the length of the ladder $t$ from the statement of Theorem \ref{theorem_appearance} and (if $d_1 > \frac{1}{2}$) on the number of segments in the $\alpha$-block which end in $d_1$, but are not equal to $[1-d_1,d_1]$ (we remind the reader that this $d_1$ depends on both $l$ and the length of the maximal ladder, i.e.~$t$).
In the base case, we have $t=0$, i.e.~there is no ladder. Thus, either $l=0$, or there is no segment ending in $\frac{l-1}{2}$. In any case, $\theta_l(\tau) \neq 0$ by \textbf{(1)}-(iii). We may therefore set $\pi'$ to be the unique irreducible quotient of $\chi_W\Xi \times \chi_WA \rtimes \theta_l(\tau)$:
\[
\chi_W\Xi \times \chi_WA \rtimes \theta_l(\tau) \twoheadrightarrow \pi'.
\]
According to Theorem \ref{theorem_appearance}, $\pi'$ should be equal to $\theta_l(\pi)$. Indeed, this follows easily: repeatedly applying Corollary \ref{cor_theta_epi} to the above epimorphism, we get
\[
\chi_V\Xi \times \chi_VA \rtimes \Theta_{-l}(\theta_l(\tau)) \twoheadrightarrow \theta_{-l}(\pi').
\]
Since $\Theta_{-l}(\theta_l(\tau))$ is in fact irreducible, we have $\Theta_{-l}(\theta_l(\tau)) = \tau$. Thus, comparing the left-hand side of the above epimorphism with \eqref{eq_stdmod}, we see that $\theta_{-l}(\pi') = \pi$.
Therefore, $\theta_l(\pi) = \pi'$, as desired. This completes the base case.

Now assume that the description of the low-rank lifts is true whenever the length of the ladder in Theorem \ref{theorem_appearance} (1) is strictly less than $t$. We prove that the description of the lifts also holds when the ladder is of length $t$. Thus, let $\pi$ be as in the statement of the theorem.

First, we assume that $l-2t > 0$ and $[c_1,d_1] = [1-d_1,d_1]$. According to the statement of the theorem, we need to show that $\theta_l(\pi)$ is the unique irreducible quotient of
\begin{equation}
\label{eq_stdmod2}
\chi_W\Xi \times \chi_WA' \rtimes \theta_{l-2t+2}(\tau'),
\end{equation}
where $\tau' = L(\chi_VSt_{l-2t+1}\nu^\frac{1}{2};\tau) = L(\chi_V\delta(1-d_1,d_1);\tau)$. Here $A'$ denotes the $\alpha$-block obtained from $A$ by applying the transformation described in Theorem \ref{theorem_appearance}; namely, $[c_i,d_i]$ is replaced by $[c_i,d_i-1]$ for $i=2,\dotsc,t$, and $[c_1,d_1]$ is omitted (it is now a part of $\tau'$). We let $\pi'$ denote the unique irreducible quotient of the standard representation in \eqref{eq_stdmod2}. Also, to simplify notation, set $\sigma = \theta_{l-2t+2}(\tau')$. We thus have
\begin{equation}
\label{eq_sigma}
\chi_W\Xi \times \chi_WA' \rtimes \sigma \twoheadrightarrow \pi'
\end{equation}
To prove that \eqref{eq_stdmod2} is indeed the standard module of $\theta_l(\pi)$, we show that $\theta_{-l}(\pi') = \pi$. Applying Corollary \ref{cor_theta_epi} to the above epimorphism, we lift $\pi'$ back to the tower $(W_n)$:
\begin{equation}
\label{eq_sigma_back}
\chi_V\Xi \times \chi_VA' \rtimes \Theta_{-l}(\sigma) \twoheadrightarrow \theta_{-l}(\pi').
\end{equation}
We would like to show that the subquotient of $ \Theta_{-l}(\sigma)$ which participates in the above epimorphism is $\theta_{-l}(\sigma)$. However, as subquotients of full theta lifts of tempered representations are not fully understood, we prefer to work with discrete series representations. Recall that $\sigma = \theta_{l-2t+2}(\tau')$ is indeed a non-zero tempered representation (cf.~the proof of Lemma \ref{lemma_reverse_temp_thick} or Theorem 4.5 of \cite{Atobe_Gan}). Therefore, there are discrete series representations $\delta_1,\dotsc, \delta_d, \sigma'$ such that $\chi_W\delta_1 \times \dotsm \times \chi_W\delta_d \rtimes \sigma' \twoheadrightarrow \sigma$; note that at least one of the $\delta_i$ equals $\delta(1-d_1,d_1-1)$. Setting $\Delta = \delta_1 \times \dotsm \times \delta_d$ we may thus replace $\sigma$ with $\chi_W\Delta \rtimes \sigma'$ in \eqref{eq_sigma}. Using Corollary \ref{cor_theta_epi} again, we arrive at
\begin{equation}
\label{eq_stdmod_lift}
\begin{aligned}
\chi_V\Xi \times \chi_VA' \times \chi_V\Delta\rtimes \Theta_{-l}(\sigma') \twoheadrightarrow \theta_{-l}(\pi').
\end{aligned}
\end{equation}
instead of \eqref{eq_sigma_back}. A key observation in our proof is the following:
\begin{lem}
\label{lemma_small_theta}
The subquotient $\Theta_{-l}(\sigma')$ which participates in  \eqref{eq_stdmod_lift} is none other than $\theta_{-l}(\sigma')$.
\end{lem}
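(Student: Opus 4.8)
The plan is to combine the structure theory of full theta lifts of \emph{discrete series} representations with the inductive hypothesis of Theorem~\ref{theorem_appearance}~(1), using the algorithm of \S\ref{subs_algorithm} to control ladder lengths. Since $\sigma'$ is a discrete series representation, $l>0$, and $\Theta_{-l}(\sigma')\neq 0$ (it surjects onto $\theta_{-l}(\pi')=\pi\neq 0$ via \eqref{eq_stdmod_lift}), one may invoke the explicit description of $\Theta_{-l}(\sigma')$ from \cite{Muic_theta_discrete_Israel}: every irreducible subquotient of $\Theta_{-l}(\sigma')$ is the unique irreducible quotient of a representation of the form $\chi_V\zrep{x}{\frac{l-1}{2}}\rtimes\sigma_x$, where $\sigma_x$ is an irreducible tempered subquotient of the (nonzero) full lift $\Theta_{1-2x}(\sigma')$ and $x$ runs over a finite set of admissible exponents bounded below by some $x_0$; moreover the small lift is obtained at the extremal exponent, $\theta_{-l}(\sigma')=L(\chi_V\zrep{x_0}{\frac{l-1}{2}}\rtimes\theta_{1-2x_0}(\sigma'))$. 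From the recursive form of that description together with the way the ladder in Theorem~\ref{theorem_appearance}~(1) is chosen, one checks that $x_0=d_1$. Thus it suffices to rule out, for every admissible $x>x_0=d_1$, the possibility that the subquotient appearing in \eqref{eq_stdmod_lift} is the unique irreducible quotient of $\chi_V\zrep{x}{\frac{l-1}{2}}\rtimes\sigma_x$.

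Assume the contrary for some $x>d_1$, so that $\chi_V\Xi\times\chi_VA'\times\chi_V\Delta\times\chi_V\zrep{x}{\frac{l-1}{2}}\rtimes\sigma_x\twoheadrightarrow\theta_{-l}(\pi')$. The key step is a recombination of the $\GL$-factors, carried out exactly as the reverse of the algorithm (cf.\ Remark~\ref{rem_alg}~(iv)): $\zrep{x}{\frac{l-1}{2}}$ is a quotient of $|\cdot|^{\frac{l-1}{2}}\times|\cdot|^{\frac{l-3}{2}}\times\dotsm\times|\cdot|^x$, while $A'$ contains the shortened rungs $\drep{c_i}{d_i-1}$ and $\Delta$ contains $\drep{1-d_1}{d_1-1}$; since each $|\cdot|^{d_i}$ is adjacent to the shortened rung ending in $d_i-1$, one moves these factors into place using Lemmas~\ref{lemma_zel0}, \ref{lem_zel1} and \ref{lem_zel2} and replaces $\drep{c_i}{d_i-1}$ by $\drep{c_i}{d_i}$ for all $i$ with $x\leq d_i\leq\frac{l-1}{2}$, leaving the rungs ending in $d_1,\dotsc,x-1$ untouched. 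This produces an epimorphism onto $\theta_{-l}(\pi')$ from a representation which, after sorting, is the output of the algorithm of \S\ref{subs_algorithm} applied to a standard module whose $\alpha$-block has a ladder of length $t'=x-d_1$; since $x\leq\frac{l-1}{2}=d_t$ and $d_1=d_t-t+1$, we have $t'\leq t-1<t$. By Proposition~\ref{prop_unique_q} that representation has a unique irreducible quotient, and as $\theta_{-l}(\pi')\neq 0$ is a quotient of it, the two must coincide.

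Now apply the inductive hypothesis — Theorem~\ref{theorem_appearance}~(1), assumed valid for all ladders of length $<t$ — to compute $\theta_l(\theta_{-l}(\pi'))$. It identifies $\theta_l(\theta_{-l}(\pi'))$ as the unique irreducible quotient of a standard module whose tempered part is a theta lift of a representation of $G(W_{n_0})$ for some $n_0<n$ (possibly after a Steinberg twist, as in the second subcase of part (1)); in particular that tempered part is a representation of an orthogonal group $H(V_{m_x})$ with $m_x$ determined by $n_0$, $l$ and $t'$. On the other hand $\theta_l(\theta_{-l}(\pi'))=\pi'$, whose standard module \eqref{eq_stdmod} has tempered part $\sigma=\theta_{l-2t+2}(\tau')$, a representation of $H(V_m)$ with $m=n+\epsilon-(l-2t+2)$. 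Uniqueness of the standard module forces the two tempered parts to agree, hence $m_x=m$; writing out both dimensions, this equality reads $n_0+2t'=n+2t-2$ (or $n_0+2t'=n+2t$ in the twisted subcase), which together with $t'\leq t-1$ gives $n_0\geq n$, contradicting $n_0<n$. Therefore $x=x_0$, which is the assertion of the lemma.

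The main obstacle I expect is the recombination step: one must check carefully that, after merging the factors coming from $\zrep{x}{\frac{l-1}{2}}$ with the shortened rungs, the resulting $\alpha$-block is genuinely of the form to which Proposition~\ref{prop_unique_q} applies (an algorithm output), and that its ladder has length exactly $x-d_1$. A secondary technical point is the borderline case in which $\sigma_x$ is already tempered (the $\zeta$-segment empty): here the recombination is vacuous, but one still needs to observe that $A'$ consists of rungs shortened by one step, so that the relevant ladder for computing $\theta_l(\theta_{-l}(\pi'))$ ends below $\frac{l-1}{2}$ and hence has length $<t$, and then run the same dimension count; alternatively this case can be excluded directly by comparing the sizes of the groups supporting the tempered parts of the standard modules of $\pi$ and of $\theta_{-l}(\pi')$.
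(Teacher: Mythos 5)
Your overall strategy—invoking Mui\'c's description of the irreducible subquotients of $\Theta_{-l}(\sigma')$ for the discrete series $\sigma'$, assuming some subquotient other than $\theta_{-l}(\sigma')$ participates, and then using the inductive hypothesis of Theorem~\ref{theorem_appearance}~(1) to reach a contradiction—is the right one and matches the paper's approach. But there are two genuine gaps in the middle where the details diverge from the paper and, as written, would not go through.

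First, the ``recombination'' step is not valid as stated. After replacing $\zrep{x}{\frac{l-1}{2}}$ by the product $|\cdot|^{\frac{l-1}{2}}\times\dotsm\times|\cdot|^{x}$ and shuffling, you claim that each pair $\drep{c_i}{d_i-1}\times|\cdot|^{d_i}$ can simply be replaced by $\drep{c_i}{d_i}$. However, $\drep{c_i}{d_i-1}\times|\cdot|^{d_i}$ has two irreducible subquotients (Lemma~\ref{lem_zel1}): its quotient $\drep{c_i}{d_i}$ and its subrepresentation, the Langlands quotient $L(|\cdot|^{d_i}\times\drep{c_i}{d_i-1})$. In an epimorphism onto the irreducible $\theta_{-l}(\pi')$ you do not get to choose which of the two participates, so the replacement is not justified; the result need not be an algorithm output, and Proposition~\ref{prop_unique_q} need not apply. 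This is precisely the ambiguity that the paper's Lemma~\ref{lem_len_k} was built to handle: rather than recombining, the paper applies Lemma~\ref{lem_len_k} to the representation \eqref{eq_slight_return_1}/\eqref{eq_slight_return_2} directly, and that lemma's own induction already splits into the two cases coming from this ambiguity. Your proof would essentially have to re-prove a version of Lemma~\ref{lem_len_k} inside the recombination step.

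Second, the bound $t'\leq t-1$ is wrong. You write ``$x\leq\frac{l-1}{2}$,'' but from Mui\'c's result the exponent $x$ ranges up to $\frac{l+1}{2}$ (this is the case $r=0$, empty $\zeta$-segment, $\sigma_x$ tempered), giving $t'=x-d_1=t$. So $t'=t$ is a genuine possibility, and the outer inductive hypothesis (valid only for ladder length $<t$) is not available. The paper handles $t'=t$ with a separate, nested induction on the number of segments of the form $[1-d_1,d_1]$ in the $\alpha$-block. You acknowledge the ``borderline case'' but your suggested fix (claiming the ladder of $\theta_{-l}(\pi')$ ``ends below $\frac{l-1}{2}$'') is not substantiated: the segments in $\Delta$ and $\sigma_1$ also contribute to the $\alpha$-block of $\theta_{-l}(\pi')$, and nothing a priori forces the ladder there to be shorter than $t$. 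Without the nested induction, the $t'=t$ case is simply not covered.

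A smaller remark: your final contradiction is a dimension count on the tempered parts, whereas the paper compares the tempered parts by lifting them back to level $-(l-2t+2)$ and using the results of Atobe--Gan. Both may work, but the dimension identity you write down would need to be checked carefully (it depends on the precise relation between $n$, $m$, $\epsilon$, the lift level, and the twist), and you have not done so; the paper's lift-comparison sidesteps this arithmetic.
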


\noindent The proof of this lemma is the part of the induction step which relies on the induction hypothesis. However, since it also references the ensuing part of the proof of Theorem \ref{theorem_appearance} we postpone it until after the proof of this theorem to improve readability (see Appendix).

The above lemma shows that we may replace $\Theta_{-l}(\sigma')$ with $\theta_{-l}(\sigma')$ in \eqref{eq_stdmod_lift}, thus getting
\[
\chi_V\Xi \times \chi_VA' \times \chi_V\Delta\times \chi_V\zrep{\frac{l+1}{2}-t}{\frac{l-1}{2}} \rtimes \theta_{2t-l}(\sigma') \twoheadrightarrow \theta_{-l}(\pi').
\]
Since at least one of the $\delta_i$'s from the definition of $\Delta$ is equal to $\text{St}_{l-2t}$, this leads to two possibilities. One possibility is that $\zrep{\frac{l+1}{2}-t}{\frac{l-1}{2}}$ may switch places with $\Delta$, so that
\begin{align*}
\chi_V\Xi \times \chi_VA' \times \chi_V\zrep{\frac{l+1}{2}-t}{\frac{l-1}{2}} \times \chi_V\Delta \rtimes \theta_{2t-l}(\sigma') \twoheadrightarrow \theta_{-l}(\pi'),
\end{align*}
but we also have to take into consideration the other option, i.e.
\begin{align*}
\chi_V\Xi \times \chi_VA' \times \chi_VL \times \chi_V\Delta' \rtimes \theta_{2t-l}(\sigma') \twoheadrightarrow \theta_{-l}(\pi'),
\end{align*}
where $\Delta = \text{St}_{l-2t} \times \Delta'$, and $L$ is the unique irreducible quotient of $\zrep{\frac{l+3}{2}-t}{\frac{l-1}{2}}\times \text{St}_{l-2t+1}\nu^\frac{1}{2}$. Letting $\tau_1'$ (resp.~$\tau_2'$) be the appropriate (necessarily tempered) irreducible subquotient of $\chi_V\Delta \rtimes \theta_{2t-l}(\sigma')$ (resp.~$\chi_V\Delta' \rtimes \theta_{2t-l}(\sigma')$), the above discussion shows that $\theta_{-l}(\pi')$ is an irreducible quotient of
\begin{equation}
\label{eq_thin}
\chi_V\Xi \times \chi_VA' \times \chi_V\zrep{\frac{l+1}{2}-t}{\frac{l-1}{2}} \rtimes \tau_1'
\end{equation}
or
\begin{equation}
\label{eq_thick}
\chi_V\Xi \times \chi_VA' \times \chi_VL \rtimes \tau_2'.
\end{equation}
We now claim the following:
\begin{lem}
\label{lemma_unique_quotient_return}
Both \eqref{eq_thin} and \eqref{eq_thick} possess a unique irreducible quotient. 
\end{lem}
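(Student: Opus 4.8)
\textbf{Proof proposal for Lemma \ref{lemma_unique_quotient_return}.}
The plan is to deduce both assertions from Proposition \ref{prop_unique_q}, by showing that each of \eqref{eq_thin} and \eqref{eq_thick} is a quotient of a representation which is obtained by running the algorithm of \S\ref{subs_algorithm} on a genuine standard module. The point is the elementary observation that if $Y \twoheadrightarrow X$ and $Y$ has a (unique) irreducible quotient $\xi$, then, $X$ being of finite length, every irreducible quotient of $X$ is also a quotient of $Y$, hence equals $\xi$; so $X$ too has a unique irreducible quotient. All rearrangements below are carried out with Lemmas \ref{lemma_zel0}, \ref{lem_zel1} and \ref{lem_zel2}, which let us permute non-linked factors and trade $\drep{a}{b-1}\times\zrep{b}{d}$ for the length-two representation it dominates.

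For \eqref{eq_thin}, recall that here $l-2t>0$ and $[c_1,d_1]=[1-d_1,d_1]$, so $\textnormal{St}_{l-2t}=\drep{c_1}{d_1-1}$ is precisely the \emph{shortened top rung} of the ladder, and it has been absorbed into $\tau_1'$ (which was chosen as an irreducible subquotient of $\chi_V\Delta\rtimes\theta_{2t-l}(\sigma')$, one of whose $\Delta$-factors is $\textnormal{St}_{l-2t}$). The first step is therefore to \emph{peel this Steinberg factor back out}: using that the parameter of $\tau_1'$ contains $\chi_VS_{l-2t}$, one has $\chi_V\drep{c_1}{d_1-1}\rtimes\tau_1''\twoheadrightarrow\tau_1'$ for a tempered $\tau_1''$ with $\chi_VS_{l-2t}$ removed from its parameter, so that \eqref{eq_thin} is a quotient of
\[
\chi_V\Xi\times\chi_VA'\times\chi_V\drep{c_1}{d_1-1}\times\chi_V\zrep{d_1}{\frac{l-1}{2}}\rtimes\tau_1''.
\]
Now the shortened rungs $[c_i,d_i-1]$, $i=2,\dots,t$, sit inside $A'$, the shortened top rung $\drep{c_1}{d_1-1}$ is explicit, and $\zrep{d_1}{\frac{l-1}{2}}=\zeta\big(\tfrac{l+1}{2}-t,\tfrac{l-1}{2}\big)$ is exactly the $\zeta$-factor the algorithm with input $k=\frac{l-1}{2}$ produces after $t$ iterations. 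Reversing the algorithm as in Remark \ref{rem_alg}(iv) — the segments $\drep{c_1}{d_1-1},[c_2,d_1],\dots,[c_t,\frac{l-1}{2}-1]$ being lengthened by one and the $\zeta$-factor deleted — recovers the ladder $[c_1,d_1],\dots,[c_t,d_t]$ and exhibits the above representation as the output of the algorithm applied to the standard module $\chi_V\Xi\times\chi_VA\rtimes\tau_1''$. Proposition \ref{prop_unique_q} then gives that output a unique irreducible quotient, which descends to \eqref{eq_thin}.

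For \eqref{eq_thick}, the only new ingredient is the factor $\chi_VL$. Applying Lemma \ref{lem_zel1} to the adjacent segments $[1-d_1,d_1]$ and $[d_1+1,\frac{l-1}{2}]$ identifies $L$ as the unique irreducible quotient of $\drep{1-d_1}{d_1-1}\times\zrep{d_1}{\frac{l-1}{2}}$; hence \eqref{eq_thick} is a quotient of
\[
\chi_V\Xi\times\chi_VA'\times\chi_V\drep{1-d_1}{d_1-1}\times\chi_V\zrep{d_1}{\frac{l-1}{2}}\rtimes\tau_2',
\]
which is of exactly the same shape as the representation produced above for \eqref{eq_thin} (with $[1-d_1,d_1-1]$ again playing the role of the shortened top rung). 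The same reverse-algorithm argument identifies it as the algorithm output attached to $\chi_V\Xi\times\chi_VA\rtimes\tau_2'$, and Proposition \ref{prop_unique_q} finishes this case as well.

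The hard part will be the combinatorial bookkeeping hidden in the phrase ``reversing the algorithm'': one must check that the (shortened) rungs in $A'$ remain correctly ordered and pairwise linked, that after restoring the top rung one genuinely recovers the ladder $[c_1,d_1],\dots,[c_t,d_t]$ and not a shorter or wider one (this is where the minimality of the ladder chosen in Theorem \ref{theorem_appearance}(1) is used, together with the fact, from positivity of the centers of the $A$-segments, that $[1-d_1,d_1-1]$ is the longest, hence rightmost, segment of that level), and that the resulting standard module satisfies the non-degeneracy constraints of Remark \ref{rem_alg}(v) so that it is indeed an algorithm output. The secondary technical point is the extraction of $\textnormal{St}_{l-2t}$ from $\tau_1'$, for which I would invoke the description of $\tau_1'$ as a subquotient of $\chi_V\Delta\rtimes\theta_{2t-l}(\sigma')$ together with the temperedness and parity statements of \cite[Theorem 4.5]{Atobe_Gan}.
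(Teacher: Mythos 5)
Your overall strategy --- express \eqref{eq_thin} and \eqref{eq_thick} as (quotients of) algorithm outputs and invoke Proposition \ref{prop_unique_q} --- is the same as the paper's, but your specific realization has two genuine gaps.

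First, the surjection onto \eqref{eq_thin} that you actually obtain from $\chi_V\drep{c_1}{d_1-1}\rtimes\tau_1''\twoheadrightarrow\tau_1'$ is
\[
\chi_V\Xi\times\chi_VA'\times\chi_V\zrep{d_1}{\tfrac{l-1}{2}}\times\chi_V\drep{c_1}{d_1-1}\rtimes\tau_1''\twoheadrightarrow\eqref{eq_thin},
\]
with $\zeta$ to the \emph{left} of the extracted $\delta$, whereas you write the source with $\delta$ to the left of $\zeta$. The segments $[c_1,d_1-1]$ and $[d_1,\tfrac{l-1}{2}]$ are adjacent, so by Lemma \ref{lem_zel1} the two orders are \emph{not} isomorphic (they have different unique irreducible quotients), and one does not dominate the other. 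Thus the surjection from your intended source (the one you then claim is an algorithm output) onto \eqref{eq_thin} is never established.

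Second, and independently, your ``reverse to recover $A$'' step does not produce an algorithm output. The bottom rung here is $[c_1,d_1]=[1-d_1,d_1]$, which is centered at $\tfrac12$. Line 10 of the algorithm (equivalently, the constraint $c_1+d_1\gneqq 1$ in Remark \ref{rem_alg}(iii)) explicitly forbids continuing the ladder to such a segment (unless $d_1=\tfrac12$). So applying the algorithm with $k=\tfrac{l-1}{2}$ to the $\alpha$-block $A$ leaves $[c_1,d_1]$ untouched and produces $\zrep{d_1+1}{\tfrac{l-1}{2}}$, \emph{not} $\drep{c_1}{d_1-1}\times\zrep{d_1}{\tfrac{l-1}{2}}$. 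Consequently, $\chi_V\Xi\times\chi_VA'\times\chi_V\drep{c_1}{d_1-1}\times\chi_V\zrep{d_1}{\tfrac{l-1}{2}}\rtimes\tau_i''$ is not the output of the algorithm applied to $\chi_V\Xi\times\chi_VA\rtimes\tau_i''$ (Remark \ref{rem_alg}(v) fails for it), so Proposition \ref{prop_unique_q} does not apply as you use it. This also affects your \eqref{eq_thick} argument, which uses the same shortened-$\delta$ / $\zeta$ pair.

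The paper sidesteps both problems: for \eqref{eq_thin} it does \emph{not} touch $\tau_1'$ at all, instead building a reverse ladder $[c_1',d_1'],\dots,[c_t',d_t']$ entirely from segments \emph{inside} $A'$ (starting from the longest segment of $A'$ ending in $d_1-1$, which necessarily has positive center and so is not excluded by line 10), and showing via an inductive width comparison that the lengthened top rung is minimal, so that Remark \ref{rem_alg}(v) applies. For \eqref{eq_thick} it replaces $L$ by $\zrep{d_1+1}{\tfrac{l-1}{2}}\times\drep{1-d_1}{d_1}$ --- the \emph{un}-shortened $[c_1,d_1]$ together with the $\zeta$ beginning one step higher --- which is exactly what the algorithm applied to $\pi$ produces, since $[c_1,d_1]$ is the rung the algorithm refuses to shorten. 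Your version would need the algorithm to do the one thing line 10 forbids.
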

\begin{proof}
We first address \eqref{eq_thin}. We use Remark \ref{rem_alg} (v) to show that \eqref{eq_thin} is the output of the algorithm of Section \ref{subs_algorithm}; if that is the case, Proposition \ref{prop_unique_q} shows that the irreducible quotient is unique.

First, let $[c_1',d_1']$ be the longest segment in $A'$ which ends in $d_1' = \frac{l-1}{2}-t$. Any segments appearing in $A'$ to the right of $[c_1',d_1']$ are not linked to $[\frac{l+1}{2}-t, \frac{l-1}{2}]$, so we can move $\zrep{\frac{l+1}{2}-t}{\frac{l-1}{2}}$ to the immediate right of $[c_1',d_1']$. By Remark (iv), we consider the ladder $[c_1',d_1'], \dotsc, [c_t',d_t']$ which maximizes segment lengths. Note that $d_1' = \frac{l-1}{2}-t, \dotsc, d_t' = \frac
{l-3}{2}$, i.e.~$d_i' = d_i-1$ for all $i$. We need to show that $[c_t',\frac{l-1}{2}]$ is the shortest segment ending in $\frac{l-1}{2}$ in the $\alpha$-block after we apply the transformation described in Remark \ref{rem_alg} (iv) to this ladder. (Note that it need not be the unique shortest segment, i.e.~the inequality is not necessarily strict)

In order to prove that $[c_t',d_t'+1]$ is the shortest, it suffices to prove that it is shorter than or equal to $[c_t,d_t]$. Indeed, $[c_t,d_t]$ was the shortest segment in $A$ with $d_t = \frac{l-1}{2}$. Therefore, if $[c_t',d_t'+1]$ is shorter than $[c_t,d_t]$, it is also shorter than any other segment ending in $\frac{l-1}{2}$.

To prove the desired inequality, we prove by induction that $[c_i',d_i'+1]$ is shorter than (or equal to) $[c_i,d_i]$ for all $i = 1, \dotsc, t$. For the base case $i=1$, recall that $[c_1,d_1] = [1-d_1,d_1]=[-d_1',d_1'+1]$. The segment $[c_1',d_1']$ is obviously strictly shorter than $[-d_1',d_1']$, so $[c_1',d_i'+1]$ is shorter than $[-d_1',d_1'+1] = [c_1,d_1]$.

%Now assume that $[c_{i-1}',d_{i-1}'+1]$ is shorter than $[c_{i-1},d_{i-1}]$ for some $i > 2$. By Remark \ref{rem_alg} (iv), $[c_i',d_i']$ is chosen to be the longest segment with $d_i' = d_{i-1}'+1$ which is linked to $[c_{i-1}',d_{i-1}']$.
%On the other hand, the same remark also guarantees that in $A'$, $[c_i,d_i-1]$ is the longest segment ending in $d_i -1 = d_{i-1}$ which is linked to $[c_{i-1},d_{i-1}-1]$. Since $[c_{i-1}',d_{i-1}']$ is shorter than $[c_{i-1},d_{i-1}-1]$, this implies $[c_i',d_i']$ is shorter than $[c_i,d_i-1]$. In other words, $[c_i',d_i'+1]$ is shorter than $[c_i,d_i]$ and the induction step is complete.

Now assume that $[c_{i-1}',d_{i-1}'+1]$ is shorter than $[c_{i-1},d_{i-1}]$ for some $i > 2$. By Remark \ref{rem_alg} (iv), $[c_i',d_i']$ is chosen to be the longest segment with $d_i' = d_{i-1}'+1$ which is linked to $[c_{i-1}',d_{i-1}']$. We want to show that $c_i'\ge c_i.$ If $[c_i',d_i']$ is obtained by shortening the segment $[c_i,d_i]$ we have $c_i'=c_i$ and we are finished. If not, it means that $[c_i',d_i']$ is just one of the segements which end in $d_i-1.$ If this is one of the segments which switch places with $[c_i,d_i]$ in the algorithm, we are also finished. Otherwise, since $[c_{i-1},d_i-1]$ is the first one which does not switch places with $[c_i,d_i],$ we must have $c_i'\le c_{i-1}.$ But, by the induction hypothesis, we have $c_i'\le c_{i-1}\le c_{i-1}',$ so that $c_i'\le c_{i-1}';$ a contradiction.

The proof in case \eqref{eq_thick} is much shorter: we only need to notice that \eqref{eq_thick} closely resembles the representation obtained by applying the algorithm to $\pi$. Indeed, notice that $L$ can switch places with any other factors of the form $\text{St}_{l-2t+1}\nu^\frac{1}{2}$ which appear in $A'$ (see Lemma \ref{lem_zel2}). We may thus move $L$ to the left of all such factors and write $\zrep{\frac{l+3}{2}-t}{\frac{l-1}{2}}\times \text{St}_{l-2t+1}\nu^\frac{1}{2}$ instead of $L$. On the one hand, the representation obtained this way obviously possesses \eqref{eq_thick} as a quotient. On the other hand, this representation we have just obtained has the exact same $\alpha$-block as the one obtained by applying the algorithm to $\pi$. Therefore, Proposition \ref{prop_unique_q} guarantees that it possesses a unique irreducible quotient. Consequently, \eqref{eq_thick} also has a unique irreducible quotient.
\end{proof}
We now prove that \eqref{eq_thin} is impossible, whereas \eqref{eq_thick} leads to the desired conclusion that $\theta_{-l}(\pi') = \pi$. If $\theta_{-l}(\pi')$ is a quotient of \eqref{eq_thin}, we now know that it is the unique quotient; therefore, the subquotient of $\chi_V\zrep{\frac{l+1}{2}-t}{\frac{l-1}{2}} \rtimes \tau_1'$ which participates in the above epimorphism is its unique irreducible quotient. Similarly, if $\theta_{-l}(\pi')$ is a quotient of \eqref{eq_thick}, the subquotient of $\chi_VL \rtimes \tau_2'$ which participates in the epimorphism is its unique irreducible quotient. Also note that these irreducible subquotients appear in $\Theta_{-l}(\sigma)$. We thus arrive at the following conclusions:
\begin{itemize}
\item if $\theta_{-l}(\pi')$ is a quotient of \eqref{eq_thin}, then the subquotient of $\Theta_{-l}(\sigma)$ which participates in \eqref{eq_sigma_back} is of the form
\begin{equation}
\label{eq_thin2}
L(\chi_V|\cdot|^{\frac{l-1}{2}},\dotsc,\chi_V|\cdot|^{\frac{l+1}{2}-t}; \tau_1')
\end{equation}
\item if $\theta_{-l}(\pi')$ is a quotient of \eqref{eq_thick}, then the subquotient of $\Theta_{-l}(\sigma)$ which participates in \eqref{eq_sigma_back} is of the form
\begin{equation}
\label{eq_thick2}
L(\chi_V|\cdot|^{\frac{l-1}{2}},\dotsc,\chi_V|\cdot|^{\frac{l+3}{2}-t},\chi_V\text{St}_{l-2t+1}\nu^\frac{1}{2};\tau_2')
\end{equation}
\end{itemize}
In both cases, we make use of the following lemma.
\begin{lem}
\label{lemma_reduce}
Let $\sigma \in \Irr(H_m)$ be a tempered representation. Let $k>0$ and let $\xi$ be an irreducible subquotient of $\Theta_{-k}(\sigma)$, and at the same time a quotient of $\chi_V|\cdot|^{\frac{k-1}{2}} \rtimes \xi'$ for some irreducible representation $\xi'$. Then $\xi'$ is a subquotient of $\Theta_{2-k}(\sigma)$; in particular, $\Theta_{2-k}(\sigma)\neq 0$.
\end{lem}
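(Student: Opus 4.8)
The plan is to read off the conclusion from a single Jacquet–module computation based on Kudla's filtration, in the same spirit as the proof of Corollary \ref{cor_shaving}.

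First I translate the hypotheses on $\xi$ into the Jacquet module. Since $\xi$ is a quotient of $\chi_V|\cdot|^{\frac{k-1}{2}}\rtimes\xi'$, Lemma \ref{lemma:MVWinv} together with $\chi_V^\vee=\chi_V$ gives $\xi\hookrightarrow\chi_V|\cdot|^{-\frac{k-1}{2}}\rtimes\xi'$, so by Frobenius reciprocity $\chi_V|\cdot|^{-\frac{k-1}{2}}\otimes\xi'$ is a quotient, hence a subquotient, of $R_{P_1}(\xi)$, where $P_1\subset G_n$ is the maximal parabolic with Levi $\GL_1(F)\times G_{n-2}$. As $\xi$ is a subquotient of $\Theta_{-k}(\sigma)$ and the Jacquet functor is exact, $\chi_V|\cdot|^{-\frac{k-1}{2}}\otimes\xi'$ occurs as a subquotient of $R_{P_1}(\Theta_{-k}(\sigma))$; equivalently, writing $\Theta_{-k}(\sigma)^\vee=\Hom_{H_m}(\omega_{m,n},\sigma)_\infty$ (Lemma \ref{lem_ThetaHom}) and using the standard compatibility of the Jacquet functor along $P_1$ with $\Hom_{H_m}(-,\sigma)_\infty$, the character $\chi_V|\cdot|^{-\frac{k-1}{2}}\otimes(\xi')^\vee$ occurs as a subquotient of $\Hom_{H_m}\!\big(R_{P_1}(\omega_{m,n}),\sigma\big)_\infty$.

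Next I apply Kudla's filtration (Theorem \ref{thm_Kudla} with $k=1$): $R_{P_1}(\omega_{m,n})=R^0\supset R^1\supset R^2=0$, with $J^0=R^0/R^1=\chi_V|\cdot|^{\lambda_1}\otimes\omega_{m,n-2}$, where $\lambda_1=(m-n+1-\epsilon)/2$, and $J^1=R^1=\Ind^{\GL_1\times G_{n-2}\times H_m}_{\GL_1\times G_{n-2}\times Q_1}(\Sigma_1\otimes\omega_{m-2,n-2})$. Applying the left–exact functor $\Hom_{H_m}(-,\sigma)_\infty$ yields an exact sequence whose kernel term is $\Hom_{H_m}(J^0,\sigma)_\infty=\chi_V|\cdot|^{\lambda_1}\otimes\Theta_{2-k}(\sigma)^\vee$ and whose cokernel embeds into $\Hom_{H_m}(J^1,\sigma)_\infty$; a direct check of the index conventions gives $\lambda_1=-\frac{k-1}{2}$, so the $J^0$–part is exactly $\chi_V|\cdot|^{-\frac{k-1}{2}}\otimes\Theta_{2-k}(\sigma)^\vee$. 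Unwinding $J^1$ by second Frobenius reciprocity against $R_{\overline{Q_1}}(\sigma)$ and by the explicit $\Sigma_1$–action (exactly as in the computation underlying Proposition \ref{prop_Muic_isotype}), every $\GL_1(F)$–character occurring in $\Hom_{H_m}(J^1,\sigma)_\infty$ is of the form $\chi_V\chi_W^{-1}\mu$ with $\mu$ a character appearing in $R_{\overline{Q_1}}(\sigma)$. Since $\sigma$ is tempered, Casselman's criterion bounds these $\mu$, and comparing exponents with $-\frac{k-1}{2}$ shows that $\chi_V|\cdot|^{-\frac{k-1}{2}}\otimes(\xi')^\vee$ cannot arise from the $J^1$–part (for $k=1$ the factor $\chi_V|\cdot|^{0}$ would be unitary and this case does not occur in the applications of the lemma, but it is also covered by the same comparison). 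Therefore $\chi_V|\cdot|^{-\frac{k-1}{2}}\otimes(\xi')^\vee$ is a subquotient of $\chi_V|\cdot|^{-\frac{k-1}{2}}\otimes\Theta_{2-k}(\sigma)^\vee$, which forces $(\xi')^\vee$ to be a subquotient of $\Theta_{2-k}(\sigma)^\vee$, i.e. $\xi'$ a subquotient of $\Theta_{2-k}(\sigma)$; in particular $\Theta_{2-k}(\sigma)\neq 0$.

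The main obstacle is the last step, namely excluding the $J^1$–contribution: this requires carefully tracking the normalized modulus characters and the $\Sigma_1$–action through Kudla's filtration in order to pin down precisely the $\GL_1(F)$–characters occurring in $\Hom_{H_m}(J^1,\sigma)_\infty$, and then checking, using the temperedness of $\sigma$ via Casselman's criterion, that none of them equals $\chi_V|\cdot|^{-\frac{k-1}{2}}$. This is the same bookkeeping already carried out for Proposition \ref{prop_Muic_isotype} and Corollary \ref{cor_shaving}; everything else is formal.
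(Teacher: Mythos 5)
Your overall strategy — push the hypothesis on $\xi$ into the Jacquet module, pass to $\Hom_{H_m}(R_{P_1}(\omega_{m,n}),\sigma)_\infty$, apply Kudla's filtration, and rule out the $J^1$-contribution via Casselman — is exactly the Mui\'{c}-style argument the paper alludes to, and the index computation $\lambda_1=-\tfrac{k-1}{2}$ is correct. Two points, however, need to be handled more carefully.

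First, the ``standard compatibility of the Jacquet functor along $P_1$ with $\Hom_{H_m}(-,\sigma)_\infty$'' should be spelled out, and it is an injection rather than an isomorphism: applying the exact Jacquet functor to the defining surjection $\omega_{m,n}\twoheadrightarrow\Theta_{-k}(\sigma)\otimes\sigma$ gives $R_{P_1}(\omega_{m,n})\twoheadrightarrow R_{P_1}(\Theta_{-k}(\sigma))\otimes\sigma$, and applying $\Hom_{H_m}(-,\sigma)_\infty$ then yields $R_{P_1}(\Theta_{-k}(\sigma))^\vee\hookrightarrow\Hom_{H_m}(R_{P_1}(\omega_{m,n}),\sigma)_\infty$. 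This injection is all you need, but as stated your ``compatibility'' could be read as an isomorphism, which would require knowing that $R_{P_1}(\Theta_{-k}(\sigma))\otimes\sigma$ is the \emph{maximal} $\sigma$-isotypic quotient of $R_{P_1}(\omega)$, which is not automatic. Second, your dualization is inconsistent: $\Hom_{H_m}(-,\sigma)_\infty$ is contravariant in the $\GL_1\times G_{n-2}$-variable (this is precisely why Lemma \ref{lem_ThetaHom} carries a $\vee$), so passing through the injection above flips \emph{both} factors. The subquotient you should be locating is $\chi_V|\cdot|^{(k-1)/2}\otimes(\xi')^\vee$, and the kernel term is $\Hom_{H_m}(J^0,\sigma)_\infty=\chi_V|\cdot|^{-\lambda_1}\otimes\Theta_{2-k}(\sigma)^\vee=\chi_V|\cdot|^{(k-1)/2}\otimes\Theta_{2-k}(\sigma)^\vee$. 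You dualize $\xi'$ but not the $\GL_1$-character; the two sign errors cancel in the comparison with $J^0$, so your conclusion is unaffected, but the half-dualization should be fixed for the Casselman comparison to be transparent. Finally, the $k=1$ case puts the critical exponent at $0$, i.e.\ on the boundary of the tempered cone, so Casselman alone does not rule out $J^1$ there; since the lemma is only invoked in the paper with $k\geq 2$, this is harmless, but it would be cleaner to either restrict the statement or address $k=1$ separately rather than assert it ``is also covered by the same comparison.''
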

\begin{proof}
This lemma requires only a slight modification of the arguments used in the proof of Theorem 4.1, \cite{Muic_theta_discrete_Israel}; we leave this to the reader.
\end{proof}
Now if \eqref{eq_thin2} holds, we may use the above lemma repeatedly (first with $k=l$, then $k=l-2$, etc.) to show that $\Theta_{2t-l}(\sigma) \neq 0$.
However, recalling that $\sigma = \theta_{l-2t+2}(\tau')$, we easily see that $\theta_{2t-l-2}(\sigma)$ is the first non-zero lift on the going up tower; therefore, $\Theta_{2t-l}(\sigma) \neq 0$ is impossible. We have thus ruled out the possibility that \eqref{eq_thin2} holds.

Using the same argument, \eqref{eq_thick2} implies that $L(\chi_V\text{St}_{l-2t+1}\nu^\frac{1}{2};\tau_2')$ is a subquotient of $\Theta_{2t-l-2}(\sigma)$. We now show the following:
\begin{lem}
\label{lemma_unique_subquotient}
$\Theta_{2t-l-2}(\sigma)$ possesses only one subquotient of the form $L(\chi_V\text{St}_{l-2t+1}\nu^\frac{1}{2};\tau_2')$, and that it is equal to $\tau'$, i.e.~$L(\chi_V\text{St}_{l-2t+1}\nu^\frac{1}{2};\tau)$.
\end{lem}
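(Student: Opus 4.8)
The plan is to identify $\Theta_{2t-l-2}(\sigma)$ with the first lift of $\sigma$ on the going-up tower and exploit the fact that the first lift on the going-up tower is particularly well-behaved. Recall that $\sigma = \theta_{l-2t+2}(\tau')$, and $\tau' = L(\chi_V\mathrm{St}_{l-2t+1}\nu^{\frac{1}{2}};\tau)$; by Lemma \ref{lemma_reverse_temp_thick} (applied to the tempered $\tau$, with $m_{\phi_\tau}(\chi_VS_{l-2t})$ odd), $\sigma$ is a nonzero tempered representation of $H_m$ for which $l(\sigma)=l-2t$ is the first occurrence on \emph{its} going-down tower. By the conservation relation, the first occurrence of $\sigma$ on its going-up tower is precisely at level $2t-l-2$. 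Thus $\Theta_{2t-l-2}(\sigma)$ is the first nonzero lift of $\sigma$ on the going-up tower.

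First I would invoke the description of the first occurrence lift of a tempered representation on the going-up tower (this is part of Atobe--Gan, \cite{Atobe_Gan} Theorem 4.5, which we are already using throughout). Since $\sigma$ is tempered with $l(\sigma)=l-2t>0$ and (by the parity hypothesis inherited from $\tau$, which guarantees $m_{\phi_\tau}(\chi_VS_{l-2t})$ odd and hence $m_{\phi_\sigma}(\chi_WS_{l-2t})$ suitable) the relevant multiplicity condition holds, the first lift $\Theta_{2t-l-2}(\sigma)$ is \emph{irreducible} and is exactly a Langlands quotient of the shape $L(\chi_V\mathrm{St}_{l-2t+1}\nu^{\frac{1}{2}}; \sigma_0)$ for a unique tempered $\sigma_0$. (If this first lift is already tempered rather than a genuine Langlands quotient, the argument degenerates but still goes through: there is then no subquotient of the stated form except under the convention that the $\mathrm{St}$-part is empty, and one handles this case separately using $l-2t=0$, which is excluded here since $l-2t>0$.) Irreducibility immediately gives the uniqueness claim: $\Theta_{2t-l-2}(\sigma)$ has at most one subquotient, so \emph{a fortiori} at most one of the form $L(\chi_V\mathrm{St}_{l-2t+1}\nu^{\frac{1}{2}};\tau_2')$.

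It then remains to identify this unique subquotient with $\tau' = L(\chi_V\mathrm{St}_{l-2t+1}\nu^{\frac{1}{2}};\tau)$. The cleanest route is to compute $\theta_{2t-l-2}(\sigma)$ directly: since $\sigma = \theta_{l-2t+2}(\tau')$ and $\tau'$ occurs on its going-down tower precisely at level $l-2t+2$ (again from Lemma \ref{lemma_reverse_temp_thick} and the conservation relation for $\tau'$, noting $\tau'$ has first going-down occurrence at $l-2t+2$), the composite $\theta_{2t-l-2}\circ\theta_{l-2t+2}$ applied to $\tau'$ returns $\tau'$: going back down from the first lift recovers the original representation. More precisely, $\theta_{2t-l-2}(\sigma) = \theta_{2t-l-2}(\theta_{l-2t+2}(\tau'))$, and because $l-2t+2 = l(\tau')$ is the going-down first occurrence index for $\tau'$, the standard see-saw/tower argument (or directly Proposition 5.6 / the functoriality in \cite{Atobe_Gan}) yields $\theta_{2t-l-2}(\theta_{l-2t+2}(\tau')) = \tau'$. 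Comparing with the shape $L(\chi_V\mathrm{St}_{l-2t+1}\nu^{\frac{1}{2}};\tau)$, this is exactly $\tau'$ by definition, completing the proof.

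The main obstacle I anticipate is bookkeeping the parity/multiplicity hypotheses needed to invoke the Atobe--Gan description of the first going-up lift as an irreducible Langlands quotient of the prescribed form — one must track that the hypothesis ``$m_{\phi_\tau}(\chi_VS_{l-2t})$ is odd'' (which is in force in this branch of the proof of Theorem \ref{theorem_appearance}) propagates correctly through $\tau \rightsquigarrow \tau' \rightsquigarrow \sigma$, so that the relevant component-group characters land in the expected configuration. A secondary subtlety is the degenerate boundary case where the first going-up lift of $\sigma$ happens to be tempered (no $\mathrm{St}$-part); but under $l-2t>0$ this does not arise, and in any event the uniqueness assertion is vacuous there. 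Once the Atobe--Gan structure theorem is applied, both the uniqueness and the identification with $\tau'$ follow formally from irreducibility of the first lift and the involutivity of theta at the first occurrence index.
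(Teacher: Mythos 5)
Your proposal is built on the claim that $\Theta_{2t-l-2}(\sigma)$, being the first nonzero lift of $\sigma$ on the going-up tower, is irreducible. That claim is not justified, and it is precisely what the lemma has to work around. Atobe--Gan's Theorem~4.5 describes the \emph{small} theta lift $\theta_{2t-l-2}(\sigma)$ as a particular Langlands quotient; it says nothing about irreducibility of the \emph{full} lift $\Theta_{2t-l-2}(\sigma)$. Irreducibility of the full lift is only known in much more restricted situations (supercuspidal first occurrence, almost-equal-rank tempered, discrete series at first occurrence in Mui\'{c}'s framework), none of which applies here: in the branch of the argument where this lemma is invoked, $m_{\phi_\sigma}(\chi_WS_{l-2t})=2h>0$ is even, so $\sigma$ is tempered but \emph{not} discrete series, its first going-up lift is at depth $l-2t+2>2$ so we are not in the almost-equal-rank range, and $\theta_{2t-l-2}(\sigma)$ itself is a genuine (nontempered) Langlands quotient. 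In exactly this configuration the full lift can, and in general does, have proper subrepresentations beyond its unique irreducible quotient.

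The content of the lemma is therefore \emph{not} ``the full lift is irreducible'' but the much weaker ``among all irreducible subquotients of the full lift, at most one has the special shape $L(\chi_V\mathrm{St}_{l-2t+1}\nu^{1/2};\tau_2')$.'' Your identification $\theta_{2t-l-2}(\sigma)=\tau'$ (going back down from the first occurrence) is correct, and it settles what the unique irreducible \emph{quotient} of $\Theta_{2t-l-2}(\sigma)$ is; but it does not rule out additional subquotients of the stated form living inside the kernel of $\Theta_{2t-l-2}(\sigma)\twoheadrightarrow\theta_{2t-l-2}(\sigma)$. The paper handles this by writing $(\chi_W\mathrm{St}_{l-2t},h)\rtimes\sigma_1\twoheadrightarrow\sigma$ with $\sigma_1$ tempered and $\phi_{\sigma_1}$ free of $\chi_WS_{l-2t}$, pushing this through Corollary~\ref{cor_theta_epi}, invoking Mui\'{c}-type control on subquotients of $\Theta_{2t-l-2}(\sigma_1)$, and then doing a Jacquet-module multiplicity count (via Tadi\'{c}'s formula and Casselman's criterion) to show the candidate $\xi$ appears with multiplicity one. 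That combinatorial step is the irreducible-subquotient analysis your proposal replaces with the unproved irreducibility assertion, so the gap is essential rather than cosmetic.
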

The technical proof of this lemma does not contain details relevant to the proof of the theorem, so we again postpone it to improve readability (see Appendix).

Let us review our results so far: we have shown that $\theta_{-l}(\pi')$ cannot be a quotient of \eqref{eq_thin2}; thus, it is a quotient of \eqref{eq_thick2}. Furthermore, the above lemma shows that in \eqref{eq_thick2} we have $\tau_2' = \tau$. Repeating the arguments from the second part of the proof of Lemma \ref{lemma_unique_quotient_return} (where we proved that \eqref{eq_thick} possesses a unique irreducible quotient) we now show that $\theta_{-l}(\pi')$ has the same standard module as $\pi$---this time we know not only that the $\alpha$-block is the same, but also that $\tau_2' = \tau$. Therefore, $\theta_{-l}(\pi') = \pi$, as desired.

This completes the inductive step and proves Theorem \ref{theorem_appearance} (1) in case when $l-2t > 0$ and $[c_1,d_1] = [1-d_1,d_1]$. The inductive step in case $l-2t = 0$ or $c_1 > 1-d_1$ follows along the same lines, so we only stress the main points where it diverges from the previous case.

Assume that $\pi$ is an irreducible representation as in the statement od Theorem \ref{theorem_appearance}. We now assume that $l-2t=0$ or that $c_1 > 1-d_1$ and describe the inductive step.  According to the statement of the theorem, we need to prove that $\theta_{l}(\pi)$ is the unique irreducible quotient of
\begin{equation}
\label{eq_stdmod3}
\chi_W\Xi \times \chi_WA' \rtimes \theta_{l-2t}(\tau);
\end{equation}
in this case, $A'$ is obtained from $A$ by replacing $[c_i,d_i]$ with $[c_i,d_i-1]$ for all $i = 1,\dotsc, t$. Letting $\pi'$ denote the unique irreducible quotient of the above representation, we wish to prove that $\theta_{-l}(\pi') = \pi$. Setting $\sigma = \theta_{l-2t}(\tau)$ and $\sigma = \chi_V\Delta \rtimes \sigma'$ as in the previous case, we get an analogue of \eqref{eq_stdmod_lift}:
\[
\chi_V\Xi \times \chi_VA' \times \chi_V\Delta\rtimes \Theta_{-l}(\sigma') \twoheadrightarrow \theta_{-l}(\pi').
\]
Now the proof of Lemma \ref{lemma_small_theta} works in this case as well, so we get
\[
\chi_V\Xi \times \chi_VA' \times \chi_V\Delta\rtimes \theta_{-l}(\sigma') \twoheadrightarrow \theta_{-l}(\pi').
\]
Recall that $\theta_{-l}(\sigma')$ is the unique irreducible quotient of $\chi_V\zrep{\frac{l+1}{2}-t}{\frac{l-1}{2}} \rtimes \theta_{2t-l}(\sigma')$. Since one of the $\delta_i$'s from which $\Delta$ is induced may be equal to $\text{St}_{l-2t}$ (in the previous case this was necessary; here, it is only a possibility), we do not know if $\zrep{\frac{l+1}{2}-t}{\frac{l-1}{2}}$ can freely switch places with $\Delta$. We thus have two options, analogous to \eqref{eq_thin} and \eqref{eq_thick}:
\begin{equation}
\label{eq_thin3}
\chi_V\Xi \times \chi_VA' \times \chi_V\zrep{\frac{l+1}{2}-t}{\frac{l-1}{2}} \rtimes \tau_1 \twoheadrightarrow \theta_{-l}(\pi')
\end{equation}
or
\begin{equation}
\label{eq_thick3}
\chi_V\Xi \times \chi_VA' \times \chi_VL \rtimes \tau_2 \twoheadrightarrow \theta_{-l}(\pi').
\end{equation}
Here $\tau_1$ (resp.~$\tau_2$) is an irreducible tempered subquotient of $\chi_V\Delta \rtimes \theta_{2t-l}(\sigma')$ (resp.~$\chi_V\Delta' \rtimes \theta_{2t-l}(\sigma')$), just like in the previous case.

In this case, we prove that \eqref{eq_thick3} is impossible, whereas \eqref{eq_thin3} holds.

First, observe that if \eqref{eq_thin3} is true, then $\theta_{-l}(\pi') = \pi$, as desired. Indeed, the left-hand side \eqref{eq_thin3} has a unique irreducible quotient; this is shown using the same argument which was used to prove the analogous claim for \eqref{eq_thick} in the second part of the proof of Lemma \ref{lemma_unique_quotient_return}. Then, Lemma \ref{lemma_reduce} may be used to show that in \eqref{eq_thin3} we have $\tau_1 = \tau$. Thus, $\theta_{-l}(\pi')$ has not only the same $\alpha$-block, but also the same tempered part as $\pi$. Consequently, $\theta_{-l}(\pi') = \pi$, which we wanted to prove.

It thus remains to prove that \eqref{eq_thick3} cannot hold. We prove this fact by induction on the number of segments in $A$ which end in $d_1$, but are different (i.e.~strictly shorter) than $[1-d_1,d_1]$. For the base case, assume that there is only one such segment. The arguments we use here are similar to those we use in the proof of Lemma \ref{lemma_small_theta}: using Lemma \ref{lem_len_k}, we show that the ladder which appears in the $\alpha$-block of $\theta_{-l}(\pi')$ is not longer than $t$. However, even if the length is $t$, we may argue as follows: since $A$ contains only one segment ending in $d_1$ and different from $[1-d_1,d_1]$, \eqref{eq_thick3} easily implies that there are no such segments in the $\alpha$-block of $\theta_{-l}(\pi')$. Therefore, we may apply Theorem \ref{theorem_appearance} to $\theta_{-l}(\pi')$---either because the length of the ladder is less than $t$, or by the previous case, because the lowest rung of the ladder is now $[1-d_1,d_1]$.

However, the description of the standard module of $\theta_l(\theta_{-l}(\pi'))$ from Theorem \ref{theorem_appearance} does not match with the one provided in \eqref{eq_stdmod3} (one checks this by comparing tempered parts of the standard modules, just like we did in Lemma \ref{lemma_small_theta}). This proves that \eqref{eq_thick3} is impossible in the base case. Therefore, \eqref{eq_thin3} holds and we may apply the above discussion to infer $\theta_{-l}(\pi') = \pi$.

The inductive step is the same as the base case. For the inductive hypothesis, assume that Theorem \ref{theorem_appearance} (1) holds whenever $A$ contains less than $m$ segments which end in $d_1$ and are shorter than $[1-d_1,d_1]$. Now if $\pi$ has $m$ such segments in $A$, we use the same arguments as above to show that the $\alpha$-block of $\theta_{-l}(\pi')$ has exactly $m-1$ such segments if \eqref{eq_thick3} holds. We may therefore use the inductive hypothesis and apply the description from Theorem \ref{theorem_appearance} to compute $\theta_l(\theta_{-l}(\pi'))$. Comparing this description to the one we have in \eqref{eq_stdmod3}, we see that this is impossible---for example, if there are $m'$ segments of the form $[1-d_1,d_1]$ in $A$, then there are $m'$ of them in \eqref{eq_stdmod3}, but $m'+1$ in the $\alpha$-block of $\theta_l(\theta_{-l}(\pi'))$ as described by Theorem \ref{theorem_appearance}. Thus, we arrive at a contradiction, and infer that \eqref{eq_thick3} cannot hold. Again, this means that \eqref{eq_thin3} holds, and we may finish the proof by repeating the argument described above. This completes the inductive step in case $[c_1,d_1] \neq [1-d_1,d_1]$.

We have therefore completed the inductive step, thereby showing that Theorem \ref{theorem_appearance} (1) holds when the length of the ladder equals $t$. Thus, Theorem \ref{theorem_appearance} (1) is proven.

We now prove parts (2) and (3), both of which follow directly from (1).

\bigskip

\noindent\textbf{(2) Going-down tower, high rank.} Let $\pi'$ be the quotient of the standard representation described in Theorem \ref{theorem_appearance} (2). We need to show that $\theta_{-l}(\pi) = \pi'$. To this end, we may now use Theorem \ref{theorem_appearance} (1) to compute $\theta_l(\pi')$; it is obvious that in this case, the relevant ladder in the $\alpha$-block of $\pi'$ is $|\cdot|^{\frac{l-1}{2}}, |\cdot|^\frac{l-3}{2}, \dotsc, |\cdot|^{\frac{3-\kappa}{2}}$---precisely the ladder we had added to the $\alpha$-block of $\pi$ to obtain $\pi'$. According to Theorem \ref{theorem_appearance} (1), we should remove this ladder, and replace $\theta_{\kappa-2}(\tau)$ with $\theta_{2-\kappa}(\theta_{\kappa-2}(\tau)) = \tau$. Therefore, the standard module of $\theta_l(\pi')$ is equal to the standard module of $\pi$, which implies $\theta_l(\pi') = \pi$, i.e.~$\theta_{-l}(\pi) = \pi'$, as desired.

\bigskip

\noindent\textbf{(3) Going-up tower} The proof in this case repeats the steps of the last one; however, it is slightly less obvious, so we elaborate. Again, we let $\pi'$ be the quotient of the standard representation described in Theorem \ref{theorem_appearance} (3); we prove that $\theta_{-l}(\pi) = \pi'$. Once more, we do this by using Theorem \ref{theorem_appearance} (1) to compute $\theta_l(\pi')$. In the previous case, it was obvious that the relevant ladder in the $\alpha$-block of $\pi'$ is the one we have just created. Here, this is not as evident, but is still valid. Once we prove this fact, the claim $\theta_l(\pi') = \pi$ will follow, just like in (2). Note that part (3) of the Theorem has two cases, depending on $m_\phi(\chi_VS_{l_0})$. We prove the first one; the second one is similar and we leave it to the reader.

Thus, assume that $l_0 = 0$ or that $m_\phi(\chi_VS_{l_0})$ is odd. We let $[c_1',d_1'], [c_2',d_2'], \dotsc, [c_t',d_t']$ be the ladder as described in Theorem \ref{theorem_appearance} (3); we set $[c_i,d_i] = [c_i',d_i'+1]$. As explained above, in order to prove that $\theta_l(\pi') = \pi$, it remains to show that, in the $\alpha$-block of $\pi'$, $[c_t,d_t], [c_{t-1},d_{t-1}], \dotsc, [c_1,d_1]$ is the ladder of maximum length, and of minimal width among those of maximum length, such that $d_t = \frac{l-1}{2}$. We thus need to prove two things: first, that $[c_t,d_t]$ is the shortest segment ending in $\frac{l-1}{2}$ in the $\alpha$-block of $\pi'$; and secondly, that there is no segment $[c_{0},d_{0}]$ linked to $[c_1,d_1]$ such that $d_0 = d_1-1$.

First, if there were a segment $[c,d]$ with $d=\frac{l-1}{2}$ strictly shorter than $[c_t,d_t]$ in the $\alpha$-block of $\pi'$, that segment would also have to appear in the $\alpha$-block of $\pi$. Then $[c,d]$, $[c_t',d_t']$, $[c_{t-1}',d_{t-1}'],\allowbreak \dotsc,\allowbreak [c_1',d_1']$ would constitute a ladder inside the $\alpha$-block of $\pi$ satisfying the conditions of Theorem \ref{theorem_occurrence}. Theorem \ref{theorem_occurrence} would thus imply that $l(\pi) \geq l$, which would in turn imply $\theta_{-l}(\pi) = 0$ on the going-up tower, by the conservation relation. This contradicts our assumption that $\theta_{-l}(\pi)\neq 0$. Therefore, no such segment $[c,d]$ exists, i.e.~$[c_t,d_t]$ really is the shortest segment ending in $\frac{l-1}{2}$. We may now deduce that $[c_t,d_t], \dotsc, [c_1,d_1]$ is the (initial part of the) ladder in the $\alpha$-block of $\pi'$ to which Theorem \ref{theorem_appearance} (1) is applied (see Remark \ref{rem_alg} (v)). It remains to see that this is also the whole ladder, i.e. that there is no segment $[c_0,d_0]$ as described above. This follows directly from our construction: if there were such a segment, then the fact that $[c_0,d_0]$ is linked to $[c_1,d_1]$ would imply that $[c_0,d_0]$ is longer than $[c_1',d_1']$ (note that $d_1' = d_0$). This contradicts our choice of $[c_1',d_1']$ as the longest segment ending in $\frac{l_0+1}{2}$.

This completes the proof of the third and last part of Theorem \ref{theorem_appearance}.
\end{proof}

\section*{Appendix: proofs of Lemmas \ref{lemma_small_theta} and \ref{lemma_unique_subquotient}}
\begin{proof}[Proof of Lemma \ref{lemma_small_theta}]
Before proving the lemma, we need to establish a few facts about $\sigma'$. First, by definition, $\tau' = L(\chi_V\text{St}_{l-2t+1}\nu^\frac{1}{2};\tau)$. By Theorem 4.5 of \cite{Atobe_Gan}, this implies that $\tau'$ is the first lift of $\theta_{l-2t+2}(\tau')$ on the going-up tower. This has two important consequences:
\begin{itemize}
\item $(W_n)$ is the going-up tower for the tempered representation $\tau'$. It easily follows that $(W_n)$ is also the going-up tower for the discrete series part, $\sigma'$.
\item $l(\theta_{l-2t+2}(\tau')) = l-2t$, from the conservation relation.
\end{itemize} 
Furthermore, $\tau' = L(\chi_V\text{St}_{l-2t+1}\nu^\frac{1}{2};\tau)$ also implies that $\theta_{l-2t+2}(\tau')$ is a subquotient of $\text{St}_{l-2t}\rtimes \Theta_{l-2t}(\tau)$ (cf.~Corollary \ref{cor_shaving}). Since $c_1 = 1-d_1$, $m_\phi(\chi_VS_{l-2t})$ must be odd---otherwise, by Theorem \ref{theorem_occurrence}, we would have $\theta_l(\pi) = 0$. The results of \cite{Atobe_Gan} now imply that $\chi_WS_{l-2t}$ appears with even multiplicity in any subquotient of $\Theta_{l-2t}(\tau)$. Since we have established that $l(\theta_{l-2t+2}(\tau')) = l-2t$, even multiplicity of $\chi_WS_{l-2t}$ implies (again, using the results of \cite{Atobe_Gan}) that $l(\sigma') = l-2t-2$. Using the conservation relation, we get that $\theta_{2t-l}(\sigma')$ is the first lift of $\sigma'$ on the going-up tower. It is important to note that this representation is tempered. 

We now recall the results of Muić on the possible subquotients of full theta lifts. By Theorem 6.1 of \cite{Muic_theta_discrete_Israel}, $\theta_{-l}(\sigma')$ is the unique irreducible quotient of
\[
\chi_V|\cdot|^{\frac{l-1}{2}} \times \chi_V|\cdot|^{\frac{l-3}{2}} \times \dotsm \times \chi_V|\cdot|^{\frac{l+1}{2}-t} \rtimes \theta_{2t-l}(\sigma').
\]
Any other irreducible subquotient of $\Theta_{-l}(\sigma')$ is the quotient of the standard representation 
\[
\chi_V|\cdot|^{\frac{l-1}{2}} \times \chi_V|\cdot|^{\frac{l-3}{2}} \times \dotsm \times \chi_V|\cdot|^{\frac{l+1}{2}-r} \rtimes \sigma_1,
\]
where $r < t$ and $\sigma_1$ is a tempered irreducible subquotient of $\Theta_{2r-l}(\sigma')$ (we allow $r=0$). This is proved in \cite{Muic_theta_discrete_Israel} for the symplectic-orthogonal dual pair, but the same proof, mutatis mutandis, works in the metaplectic-orthogonal case. Let us assume that some irreducible subquotient of $\Theta_{-l}(\sigma')$ other than $\theta_{-l}(\sigma')$ participates in \eqref{eq_stdmod_lift}. Recalling that $\zrep{\frac{l+1}{2}-r}{\frac{l-1}{2}}$ is the unique irreducible quotient of $|\cdot|^{\frac{l-1}{2}}  \times \dotsm \times |\cdot|^{\frac{l+1}{2}-r}$, this assumption implies that $\theta_{-l}(\pi')$ is a quotient of 
\begin{equation}
\label{eq_slight_return}
\begin{aligned}
\chi_V\Xi \times \chi_VA' \times \chi_V\Delta\times \chi_V\zrep{\frac{l+1}{2}-r}{\frac{l-1}{2}} \rtimes \sigma_1.
\end{aligned}
\end{equation}
If none of the discrete series representations $\delta_1,\dotsc, \delta_d$ which appear in $\Delta$ are equal $\text{St}_{l-2r}$, then $\zrep{\frac{l+1}{2}-r}{\frac{l-1}{2}}$ may switch places with all of them; we thus get
\begin{equation}
\label{eq_slight_return_1}
\begin{aligned}
\chi_V\Xi \times \chi_VA' \times \chi_V\zrep{\frac{l+1}{2}-r}{\frac{l-1}{2}} \rtimes \tau_1 \twoheadrightarrow \theta_{-l}(\pi')
\end{aligned}
\end{equation}
for some irreducible subquotient $\tau_1$ of $\chi_V\Delta \rtimes \sigma_1$. In the other case, when $\text{St}_{l-2r}$ appears among the $\delta_i$'s, we may write $\Delta = \Delta' \times \text{St}_{l-2r}$. We either get \eqref{eq_slight_return_1}, or
\begin{equation}
\label{eq_slight_return_2}
\chi_V\Xi \times \chi_VA' \times \chi_V\zrep{\frac{l+3}{2}-r}{\frac{l-1}{2}} \times \chi_V\text{St}_{l-2r+1}\nu^\frac{1}{2} \rtimes \tau_2 \twoheadrightarrow \theta_{-l}(\pi'),
\end{equation}
where $\tau_2$ is now an irreducible subquotient of $\chi_V\Delta' \rtimes \sigma_1$. This follows from the fact that $\text{St}_{l-2r} \times \zrep{\frac{l+3}{2}-r}{\frac{l-1}{2}}$ has only two irreducible subquotients (see Lemma \ref{lem_zel1}).

We are now ready to prove the lemma. The above discussion proves the following: if a subquotient of $\Theta_{-l}(\sigma')$ different from $\theta_{-l}(\sigma')$ participates in \eqref{eq_stdmod_lift}, this implies that $\theta_{-l}(\pi')$ is a quotient of \eqref{eq_slight_return}, which in turn leads to \eqref{eq_slight_return_1} or \eqref{eq_slight_return_2}. We prove that this is impossible using an inductive argument.

In both cases, Lemma \ref{lem_len_k} shows that (in the notation of that lemma) $\text{len}_{\frac{l-1}{2}}(\theta_{-l}(\pi')) \leq \text{len}_{\frac{l-1}{2}}(\pi)$. This enables the inductive argument. Recall that we are working under the inductive hypothesis that Theorem \ref{theorem_appearance} holds whenever $ \text{len}_{\frac{l-1}{2}}(\pi) < t$. Thus, if $\text{len}_{\frac{l-1}{2}}(\theta_{-l}(\pi'))  = t' < t$, we may use Theorem \ref{theorem_appearance} to compute $\theta_l(\theta_{-l}(\pi')) = \pi'$. By Theorem \ref{theorem_appearance}, the tempered part in the standard module of $\theta_l(\theta_{-l}(\pi'))$ would have to be equal to 
\[
\theta_{l-2t'}(\tau_i) \quad\text{or} \quad \theta_{l-2t'+2}(L(\chi_V\text{St}_{l-2t'+1}\nu^\frac{1}{2};\tau_i)), \quad \text{with} \quad i =
\begin{cases}
1, &\text{if } \eqref{eq_slight_return_1} \text{ holds}\\
2 &\text{if } \eqref{eq_slight_return_2} \text{ holds}.
\end{cases}
\]
We claim that none of these tempered representations are equal to $\theta_{l-2t+2}(\tau')$ which appears in \eqref{eq_stdmod2}. Indeed, by \cite{Atobe_Gan}, we know the lifts of these representations to level $-(l-2t+2)$; they are
\[
L(\chi_V|\cdot|^{\frac{l+1}{2}-t},\dotsc,\chi_V|\cdot|^{\frac{l+1}{2}-t'}; \tau_i) \text{ and } L(\chi_V|\cdot|^{\frac{l+1}{2}-t},\dotsc,\chi_V|\cdot|^{\frac{l+3}{2}-t'}, \chi_V\text{St}_{l-2t'+1}\nu^\frac{1}{2}; \tau_i).
\] 
Since neither of these representations is equal to $\tau'$, our claim holds. We have thus shown that $t' < t$ leads to a contradiction with our inductive hypothesis.

If $t' = t$, we use a different inductive argument. Recall that we are still working under the assumption that $\text{len}_{\frac{l-1}{2}}(\pi) = t$ and that the lowest rung of the ladder is $[c_1,d_1] = [1-d_1,d_1] = [t-\frac{l-1}{2},\frac{l+1}{2}-t]$. Moreover, by the inductive hypothesis, Theorem \ref{theorem_appearance} holds for all representations for which the relevant ladder is shorter than $t$. We now use a nested inductive argument.
Let $m$ be the number of segments of the form $[1-d_1,d_1]$ which appear in the $\alpha$-block of $\pi$. We induce on $m$ to prove that neither \eqref{eq_slight_return_1} nor \eqref{eq_slight_return_2} are possible.

The base case is $m=1$: in this situation, we actually have no segments of the form $[c_1,d_1] = [1-d_1,d_1]$ in \eqref{eq_slight_return_1} and \eqref{eq_slight_return_2} (the one from $\pi$ was shortened and ended up in $\Delta$). Therefore, by Lemma \ref{lem_len_k}, in both \eqref{eq_slight_return_1} and \eqref{eq_slight_return_2} we in fact have $\text{len}_{\frac{l-1}{2}}(\theta_{-l}(\pi')) < t$. Indeed, the proof of Lemma \ref{lem_len_k} shows that the ladder which appears in $\theta_{-l}(\pi')$ is wider than the one in $\pi$; therefore, it cannot be extended to length $t$ by a segment of the form $[c',d_1]$ with $c' > c_1$. As there are no segments of the form $[c_1,d_1]$, we deduce that $\text{len}_{\frac{l-1}{2}}(\theta_{-l}(\pi')) < t$, and this again leads to a contradiction as before.

Thus, Lemma \ref{lemma_small_theta} holds if $m=1$, and we may use the remaining arguments from Section \ref{sec_lifts} to finish the proof of Theorem \ref{theorem_appearance} (1). In other words, Theorem \ref{theorem_appearance} (1) holds when $\text{len}_{\frac{l-1}{2}}(\pi)=t$ and $m=1$ (modulo the general inductive hypothesis, which states that it holds in case when the ladder is shorter than $t$).

This allows the following (nested) induction hypothesis: Theorem 6.1 (1) is also valid whenever the length of the ladder is $t$, the last segment in the ladder is $[c_1,d_1] = [1-d_1,d_1]$, and there are strictly less than $m$ factors of the form $\drep{1-d_1}{d_1}$ in the $\alpha$-block. To perform the induction step, suppose there are $m>1$ segments of the form $[c_1,d_1]$ in the $\alpha$-block of $\pi$. It would follow from \eqref{eq_slight_return_1} or \eqref{eq_slight_return_2} that there are $m-1$ of them in the $\alpha$-block of $\theta_{-l}(\pi')$. Thus, the ladder is $\theta_{-l}(\pi')$ is of length $t$ and ends in $[c_1,d_1]$; there are $m-1$ segments of the form $[c_1,d_1]$ so we may use the inductive hypothesis: using Theorem \ref{theorem_appearance}, we compute $\theta_l(\theta_{-l}(\pi')$ and see that the resulting $\alpha$-block has exactly $m-2$ segments of the form $[c_1,d_1]$.
This contradicts the definition of $\pi'$, i.e.~\eqref{eq_stdmod2}, which has exactly $m-1$ such segments in the $\alpha$-block. Therefore, Lemma \ref{lemma_small_theta} also holds when there are $m$ segments of the form $[1-d_1,d_1]$. Again, using the arguments from Section \ref{sec_lifts}, we now prove that Theorem \ref{theorem_appearance} (1) holds in this case, thus completing the (nested) induction step.

This completes the proof of Lemma \ref{lemma_small_theta}. 
\end{proof}

\begin{proof}[Proof of Lemma \ref{lemma_unique_subquotient}]
First, notice that $\Theta_{2t-l-2}(\sigma)$ certainly contains $\tau'$ as a (sub)quotient. Therefore, it suffices to prove that the subquotient of the form $L(\chi_V\text{St}_{l-2t+1}\nu^\frac{1}{2};\tau_2')$ is unique.

We know that the multiplicity of $\chi_WS_{l-2t}$ in the parameter of $\sigma$ is even, say $2h >0$. Thus, there is an irreducible tempered representation $\sigma_1$ whose parameter does not contain $\chi_WS_{l-2t}$, such that
\[
(\chi_W\text{St}_{l-2t},h) \rtimes \sigma_1 \twoheadrightarrow \sigma,
\]
where $(\chi_W\text{St}_{l-2t},h) = \chi_W\text{St}_{l-2t} \times \dotsm \chi_W\text{St}_{l-2t}$ ($h$ times). Applying Corollary \ref{cor_theta_epi}, we get
\[
(\chi_V\text{St}_{l-2t},h) \rtimes \Theta_{2t-l-2}(\sigma_1) \twoheadrightarrow \Theta_{2t-l-2}(\sigma).
\]Note that $\Theta_{2t-l}(\sigma_1)$ is the first non-zero lift of $\sigma_1$ on this tower; it is irreducible and tempered. We now use the same result we have already used for discrete series representations: the only non-tempered irreducible subquotient of $\Theta_{2t-l-2}(\sigma_1)$ is $L(\chi_V|\cdot|^{{\frac{l+1}{2}-t}};\theta_{2t-l}(\sigma'))$. This follows from Theorem 4.1 \cite{Muic_theta_discrete_Israel}. Although the theorem is originally stated for discrete series representations, the fact that the parameter of $\sigma_1$ does not contain $\chi_WS_{l-2t}$ allows us to modify the proof so that it also applies to $\sigma_1$; we leave the simple verification of this fact to the reader.

Now let $\xi$ be an irreducible subquotient of $\Theta_{2t-l-2}(\sigma)$ of the form $L(\chi_V\text{St}_{l-2t+1}\nu^\frac{1}{2};\tau_2')$. Since $\xi$ is non-tempered, the above discussion shows that it must be a subquotient of
$
(\chi_V\text{St}_{l-2t},h) \rtimes \chi_V|\cdot|^{{\frac{l+1}{2}-t}} \rtimes \theta_{2t-l}(\sigma_1)
$. As mentioned before, $\theta_{2t-l}(\sigma_1)$ is tempered. By Theorem 4.5 of \cite{Atobe_Gan}, its parameter contains $\chi_VS_{l-2t}$. Therefore, $(\chi_V\text{St}_{l-2t},h) \rtimes \theta_{2t-l}(\sigma_1)$ is irreducible and tempered. To simplify notation, we denote this representation by $\tau''$, and we let $A = \chi_V|\cdot|^{{\frac{l+1}{2}-t}} \rtimes \tau''$. Thus, $\xi$ is a subquotient of $A$ and it remains to show that $A$ possesses only one irreducible subquotient with standard module of the form $\chi_V\text{St}_{l-2t+1}\nu^\frac{1}{2} \rtimes \tau_2'$. Using Frobenius reciprocity, we see that 
\[
\xi \hookrightarrow \chi_V\text{St}_{l-2t+1}\nu^{-\frac{1}{2}} \rtimes \tau_2'
\]
implies that $R_{P_{l-2t+1}}(\xi)$ has an irreducible quotient of the form $\chi_V\text{St}_{l-2t+1}\nu^{-\frac{1}{2}} \otimes \tau_2'$. Formulas \eqref{eq_tadic_classical} and \eqref{eq_tadic_classical2} along with Casselman's criterion shows that any subquotient of $R_{P_{l-2t+1}}(A)$ with $\GL_{P_{l-2t+1}}(F)$ acting by $\chi_V\text{St}_{l-2t+1}\nu^{-\frac{1}{2}}$ must be equal to $\chi_V\text{St}_{l-2t+1} \nu^{-\frac{1}{2}}\allowbreak \otimes\allowbreak (\chi_V\text{St}_{l-2t},\allowbreak h-1) \rtimes \theta_{2t-l}(\sigma_1)$. Therefore $\xi \cong L(\chi_V\text{St}_{l-2t+1}\nu^{\frac{1}{2}}; (\chi_V\text{St}_{l-2t},h-1) \rtimes \theta_{2t-l}(\sigma_1))$. We now show that $\xi$ appears with multiplicity one in $A$.

We let $q$ denote the Langlands quotient of $|\cdot|^{{\frac{l+1}{2}-t}} \times \text{St}_{l-2t}$, so that there is an exact sequence $0 \to \text{St}_{l-2t+1}\nu^{\frac{1}{2}} \to |\cdot|^{{\frac{l+1}{2}-t}} \times \text{St}_{l-2t} \to q \to 0$. Inducing, we get $0 \to A_1 \to A \to A_2 \to 0$, where
\begin{align*}
A_1 &= \chi_V\text{St}_{l-2t+1}\nu^{\frac{1}{2}} \times (\chi_V\text{St}_{l-2t},h-1) \rtimes \theta_{2t-l}(\sigma_1)\\
A_2 &= \chi_Vq \times (\chi_V\text{St}_{l-2t},h-1) \rtimes \theta_{2t-l}(\sigma_1)
\end{align*}
Now $\xi$ is exactly the Langlands quotient of $A_1$, so it appears in $A_1$ with multiplicity one. It thus suffices to show that $\xi$ cannot appear in $A_2$. To do this, we use another Jacquet module computation. From $\text{St}_{l-2t+1}\nu^{-\frac{1}{2}} \hookrightarrow \text{St}_{l-2t} \times |\cdot|^{{t-\frac{l+1}{2}}}$ we get
\[
\xi \hookrightarrow (\chi_V\text{St}_{l-2t},h-1) \times \chi_V\text{St}_{l-2t+1}\nu^{-\frac{1}{2}} \rtimes \theta_{2t-l}(\sigma_1) \hookrightarrow (\chi_V\text{St}_{l-2t},h) \times \chi_V|\cdot|^{{t-\frac{l+1}{2}}} \rtimes \theta_{2t-l}(\sigma_1).
\]
Frobenius reciprocity now shows
\[
\Hom(R_{P_{h(l-2t)}}(\xi), (\chi_V\text{St}_{l-2t},h) \otimes \chi_V|\cdot|^{{t-\frac{l+1}{2}}} \rtimes \theta_{2t-l}(\sigma_1)) \neq 0,
\]
so $R_{P_{h(l-2t)}}(\xi)$ has a quotient of the form $(\chi_V\text{St}_{l-2t},h) \otimes \xi'$ for some irreducible representation $\xi'$. Using Tadić's formula again to compute $\mu^*(A_2)$ shows that $R_{P_{h(l-2t)}}(A_2)$ contains no such subquotient (here we use the fact that $\theta_{2t-l}(\sigma_1)$ contains $\chi_VS_{l-2t}$ with multiplicity one). This shows that $A_2$ does not have a subquotient isomorphic to $\xi$, completing the proof.
\end{proof}

\bibliographystyle{siam}
\bibliography{bibliography}

\bigskip
\end{document}